\definecolor{document_fontcolor}{rgb}{0.5625, 0, 0.92578125}
\newtheorem{Thm}{Theorem}[section]
\newtheorem{LEM}[Thm]{Lemma}
\newtheorem{Prop}[Thm]{Proposition}
\newtheorem{Rem}[Thm]{Remark}
\definecolor{mauve}{RGB}{0, 180, 0}
\begin{document}

\title{Approximations for time-dependent distributions in Markovian fluid
models}

\author{Sarah Dendievel\thanks{Universit\'e Libre de Bruxelles, D\'epartement d'Informatique,
CP~212, Boulevard du Triomphe, 1050 Bruxelles, Belgium; Sarah.Dendievel@ulb.ac.be, latouche@ulb.ac.be.} \and Guy Latouche$^*$}

\date{June 2014}
\maketitle
\begin{abstract}
In this paper we study 
the distribution of the level  at time $\theta$
of Markovian fluid queues and Markovian continuous time random walks,
the  maximum  (and minimum) level over $[0,\theta]$,
and their joint distributions.
We approximate 
$\theta$
by a random variable $T$ with Erlang distribution
 and we use an alternative
way, with respect to the usual Laplace transform approach, to compute the
distributions.
 We present probabilistic interpretation of the equations
and  provide a numerical illustration.
 \end{abstract}
\begin{description}
\item [{Keywords:}] 
Markov modulated fluid,
Erlangization approximations,
Distribution in finite time,
Joint distribution
\end{description}

\section{Introduction}

In the literature, Markovian fluid models have been analyzed for many years.
One of the first papers appeared in the sixties, for instance with 
Loynes
\cite{loynes1962continuous} studying the continuous-time behavior of queues.
In the eighties, Markovian fluid models started to be more extensively studied, in particular much work has been dedicated to the study of the stationary distribution, see for instance 
Rogers
\cite{rogers1994fluid}
and
Asmussen
 \cite{asmussen1995stationary}.
In the present paper, we explore a method to compute time-dependent distributions.

Intuitively, a Markovian fluid model
 represents the evolution
in time of some liquid level 
in a buffer: taps allow the
liquid to flow in and out at different rates.
The buffer may have a finite or an infinite capacity.
The flow is controlled by an underlying Markov chain
$\{\varphi(t):t\in\mathbb{R}^{+}\}$
with a finite state space $\mathcal{S}$,
called the \textit{phase process} :
when the Markov chain is in phase $i\in\mathcal{S}$, the level of the buffer increases with a constant rate $c_{i}$,
if $c_{i}$ is strictly positive,
or it decreases with the rate $c_{i}$,
if $c_{i}$ is strictly negative.
The \textit{level} $X(t)$ at time $t$
may be expressed as follows
\begin{equation}
X(t)=X(0)+\int_{0}^{t}c_{\varphi(s)}\mathrm{d}s.
\label{XEvolution}
\end{equation}
The process $\{\left(X(t),\varphi(t)\right):t\in\mathbb{R}^{+}\}$ is called the \textit{Markovian continuous time random walk} in this paper : it is an unrestricted
process and the level may become negative as well as positive.
The \textit{Markovian fluid queue} denoted by $\{(Z(t),\varphi\left(t\right)):t\in\mathbb{R}^{+}\}$
is related to the random walk in the following way : during
the intervals of time when  $Z\left(t\right)$ $=0$ and the rate at
time $t$ is negative, the level remains equal to zero.
The level $Z(t)$$ $ at time $t$ can be expressed as follows 
\begin{equation}
Z(t)=X(t)-\inf_{0\leq v\leq t}X(v).
\label{ZEvolution}
\end{equation}
 As the environments
remain Markovian in the whole paper, we refer
to process $\{(X(t),\varphi(t)):t\in\mathbb{R}^{+}\}$
by the name \textit{random
walk}
and to
the process $\{(Z(t),\varphi(t)):t\in\mathbb{R}^{+}\}$
by the name \textit{fluid queue}.

In this work, we focus on determining the distribution of 
$X(\theta)$, and $Z(\theta)$, at a finite time $\theta>0$.
In the literature, such time-dependent distributions  have been studied using Laplace transforms :
Ahn 
and
Ramaswami
\cite{ahn2006transient} derive time-dependent
distributions of a fluid queue  in terms of the
transform matrix of the busy period duration,
i.e. the matrix
$\Psi(s)$ is such that
$\Psi_{ij}(s)=\mathbb{E}[\exp(-s\tau)1_{\{\varphi(\tau)=j\}}|X(0)=0,\varphi(0)=i]$
for
$\tau = \inf\{t>0 : Z(t) = 0\}$,
 $i\in \mathcal{S}_{+}$ and $j\in \mathcal{S}_{-}.$
Here, we use arguments based on the Erlangization method 
and so avoid Laplace transform calculations.
The idea is to replace the fixed time $\theta$
by an Erlang-distributed random variable $T$
such that $\mathbb{E}[T]=\theta$.
Its advantage is that in
so doing, we replace integral equations by
linear equations.
The Erlangization method has been suggested by
Asmussen et al.
 \cite{asmussen2002erlangian}
 for ruin problems,
 Ramaswami et al.
 \cite{ramaswami2008erlangization} determine the return probability to the initial level before the end of the Erlang period
 and 
 Stanford \textit{et al.}
 \cite{stanford2011erlangian}
 analyze the distribution of the time to ruin.
 
 In addition to the distribution of $X(T)$,
 we compute the
 joint distribution of 
 $X(T)$
 {\it{and}}
 the minimum level during $[0,T]$.
We do the same analysis for the joint distribution of 
$X(T)$ and the  maximum level during $[0,T]$.
The Erlangization technique is of particular importance in this case of joint distributions
 because it replaces
integral equations,
that we would have to manipulate if  the time $\theta$ would be deterministic,
 by 
linear equations.
As a consequence,
the resulting probability distributions are more easily computable in the randomized version of time.

We observe that $X(T)$ has a bilateral phase-type (BPH) distribution, the density function of the BPH  has been determined in Ahn and Ramaswami \cite{ahn2005bilateral}.
Here, we go beyond the distribution of the level at time $T$ as mentionned before and we follow a different approach. We shall discuss about the differences and the similarities between the BPH and the distribution at maturity of the random walk in more details at the end of Section \ref{TDDRW}.

In the next section, we define precisely the random
walk and the fluid queue.
In Section  \ref{SECERL}, we explain the Erlangization
method and show how to combine this method with the Markovian fluid
models.
The equations for 
$X(T)$
and
$Z(T)$
are different because of
the constraint at level $0$
in the second process,
and we analyze their time-dependent behavior in two separate sections :  Section \ref{TDDRW}
and   \ref{TDDFQ}
respectively.
We conclude with an illustrative example in Section \ref{ILL}.

\section{Fluid models}
Consider the random walk $\{(X(t),\varphi\left(t\right)):t\in\mathbb{R}^{+}\}$, where
$X(t)$ is defined in
 	(\ref{XEvolution}),
and the fluid queue
	 $\{(Z(t),\varphi(t)):t\in\mathbb{R}^{+}\}$,
 where
$Z(t)$ is defined in 
	 (\ref{ZEvolution}).
For both of the fluid  models, we assume that the input rate $c_i$ is
different from zero for all $i\in \mathcal{S}=\{1,...,m\}$. We partition $\mathcal{S}$ into $\mathcal{S}_{+}\cup \mathcal{S}_{-}$
with $\mathcal{S}_{+}=\{i\in \mathcal{S}:c_{i}>0\}$ and $\mathcal{S}_{-}=\{i\in \mathcal{S}:c_{i}<0\}$.
Similarly, we define
the fluid rate matrix
$C=\mbox{diag}(c_{1},...,c_{m})$
and partition $C$ into 
 $C_{+}$ and $C_{-}$. The infinitesimal generator of $\{\varphi\left(t\right):t\in\mathbb{R}^{+}\}$
is denoted by $A$ and is written,
possibly after permutation of rows and
columns,  as 
\begin{equation}
A=\left[\begin{array}{cc}
A_{++} & A_{+-}\\
A_{-+} & A_{--}
\end{array}\right].
\label{GeneratorA}
\end{equation}
We define the joint distribution functions
$F_{j}(x,t)=\mathbb{P}\left[X\left(t\right)\leq x,\varphi\left(t\right)=j\right]$
for the random walk and 
the joint distribution functions
$H_{j}(x,t)=\mathbb{P}\left[Z\left(t\right)\leq x,\varphi\left(t\right)=j\right]$
for the fluid queue.
The next theorem gives a differential equation for
$F(\cdot,\cdot)$.
This result is well-known 
and a proof may by find in 
Mitra
\cite{mitra1988stochastic}.

\begin{Thm}
For all
$x \in \mathbb{R}$,
 $j\in  \mathcal{S}$, the joint distribution functions
$F_{j}(x,t)$ are a solution of the system
of partial differential equations
\begin{equation*}
\frac{\partial}{\partial t}F_{j}(x,t)=\sum_{i\in \mathcal{S}}F_{i}(x,t)A_{ij}-c_{j}\frac{\partial}{\partial x}F_{j}(x,t).
\end{equation*}
The joint distribution functions $H_{j}(x,t)$ 
are a solution of the same system for $x\geq0$ and $j\in  \mathcal{S}$.
\hfill $\square$
\end{Thm}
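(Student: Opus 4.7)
The plan is to derive both PDEs by the standard infinitesimal (forward Kolmogorov) argument, exploiting the Markov property of $\{(X(t),\varphi(t)):t\in\mathbb{R}^{+}\}$ on a short interval $[t,t+h]$.

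First, I would condition on $\varphi(t)$ and classify the phase trajectories on $[t,t+h]$ by the number of jumps. Given $\varphi(t)=j$, with probability $1+A_{jj}h+o(h)$ the phase stays in $j$, so $X(t+h)=X(t)+c_j h$, and the event $\{X(t+h)\le x,\varphi(t+h)=j\}$ coincides with $\{X(t)\le x-c_j h,\varphi(t)=j\}$. For each $i\ne j$, with probability $A_{ij}h+o(h)$ the phase jumps exactly once from $i$ to $j$; in that case $X(t+h)=X(t)+O(h)$, and the event reduces to $\{X(t)\le x,\varphi(t)=i\}$ up to an $o(h)$ correction. Two or more jumps contribute $o(h)$. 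Collecting these cases yields
\begin{equation*}
F_j(x,t+h)=(1+A_{jj}h)\,F_j(x-c_j h,t)+\sum_{i\ne j}A_{ij}h\,F_i(x,t)+o(h).
\end{equation*}

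Next, I would subtract $F_j(x,t)$, divide by $h$, and let $h\downarrow 0$. A first-order Taylor expansion of $F_j(x-c_j h,t)$ around $x$ produces the announced equation directly. The same argument handles $H_j$ on $x\ge 0$: when $Z(t)>0$, $Z$ and $X$ behave identically on a sufficiently short interval; when $j\in\mathcal{S}_-$ and $Z(t)=0$, reflection forces $Z(t+h)=0\le x$, which is automatically consistent with $\{Z(t)\le x-c_j h\}$ since $x-c_j h\ge 0$ for $x\ge 0$ and $c_j<0$; and for $j\in\mathcal{S}_+$ the set $\{Z(t)=0,\varphi(t)=j\}$ has zero probability. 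Hence the recurrence for $H_j$ is the same as for $F_j$, and the same PDE emerges.

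The main obstacle I foresee is regularity rather than combinatorics. In general $F_j(\cdot,t)$ and $H_j(\cdot,t)$ carry atoms (arising from the initial distribution, and, in the queue case, from the reflection at $0$), so the partial derivatives in $x$ and $t$ need not exist in the classical pointwise sense everywhere. The cleanest remedy, and the one I would adopt following Mitra, is to read the identity in the distributional (or integrated) sense, or to restrict attention to the region where $F_j$ and $H_j$ are smooth; the infinitesimal computation above then becomes a rigorous derivation once that regularity has been established.
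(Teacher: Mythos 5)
The paper does not supply its own proof of this theorem; it cites Mitra \cite{mitra1988stochastic} as the source. Your derivation via the infinitesimal forward (Kolmogorov) argument—conditioning on $\varphi(t)$, classifying $[t,t+h]$ by jump count, Taylor-expanding $F_j(x-c_jh,t)$, and noting that for the queue the boundary phases in $\mathcal{S}_+$ carry zero mass at $Z(t)=0$ while those in $\mathcal{S}_-$ keep $Z$ at $0\le x$—is precisely the standard argument found in that reference, and your closing remark on distributional regularity is the right caveat.
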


Two pairs of matrices play an important role in the next sections. These are matrices of  first return probabilities to the
initial level and the infinitesimal generators of monotone records.
Denote by $\tau_{+}(x)$ and $\tau_{-}(x)$ the two
first passage times 
\[
\tau_{+}(x)=\inf\{t>0:X(t)>x\}
\qquad
\mbox{ and }
\qquad 
\tau_{-}(x)=\inf\{t>0:X(t)<x\}.
\]
We denote by $\Psi_{ij}$ the probability that, starting from $\left(x,i\right)$
at time $0$, with $x\in\mathbb{R}$ and $i\in \mathcal{S}_{+}$, the random
walk returns to  level $x$ in a finite time and does so in phase
$j$, with $j\in \mathcal{S}_{-}$ : 
\begin{equation}\label{DefdePsi}
\Psi_{ij}=\mathbb{P}\left[\tau_{-}(x)<\infty,\varphi\left(\tau_{-}(x)\right)=j|X\left(0\right)=x,\varphi\left(0\right)=i\right].
\end{equation}
As we assume that the
process has spatial homogeneity, the matrix  $\Psi$ of
 \textit{first return probability from above}  does not depend
on the level $x$.
Similarly, the matrix $\hat{\Psi}$ of \textit{first return probabilities from below}
has the components 
\begin{equation}\label{DefdePsiHat}
\hat{\Psi}_{ij}=\mathbb{P}\left[\tau_{+}(x)<\infty,\varphi\left(\tau_{+}(x)\right)=j|X\left(0\right)=x,\varphi\left(0\right)=i\right]
\end{equation}
where $i\in \mathcal{S}_{-}$ , $j\in \mathcal{S}_{+}$.
Note that if $X(0)=x$ and if the initial phase belongs to $\mathcal{S}_{+}$,
then $\tau_{+}(x)=0$;
if, on the contrary, the initial phase belongs to $\mathcal{S}_{-}$,
then $\tau_{-}(x)=0.$
The following theorem is equivalent to  Theorem 1 in
Rogers
 \cite{rogers1994fluid}.

\begin{Thm}
The matrix $\Psi$ is the minimal nonnegative solution
of the Riccati equation
\begin{equation*}
C_{+}^{-1}A_{+-}+C_{+}^{-1}A_{++}\Psi+\Psi\left\vert C_{-}\right\vert ^{-1}A_{--}+\Psi\left\vert C_{-}\right\vert ^{-1}A_{-+}\Psi=0.
\end{equation*}
The matrix  $\hat{\Psi}$ is the minimal nonnegative solution
of the Riccati equation
	\begin{equation*}
	\left\vert C_{-}\right\vert ^{-1}A_{-+}
	+\left\vert C_{-}\right\vert ^{-1}A_{--}\hat{\Psi}
	+\hat{\Psi} C_{+}^{-1}A_{++}
	+\hat{\Psi} C_{+}^{-1}A_{+-}\hat{\Psi}
	=0.
	\end{equation*}
	\hfill $\square$
\end{Thm}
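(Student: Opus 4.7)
The proof plan has two parts: show that $\Psi$ satisfies the Riccati equation, and show it is minimal among nonnegative solutions. By spatial homogeneity I may fix the starting level at $0$.

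For the first part, my plan is to use a first-passage analysis. Starting from $(0, i)$ with $i \in \mathcal{S}_+$, I decompose $\Psi_{ij}$ by conditioning on the small events occurring over an initial time interval $[0, dt]$: whether the phase persists at $i$, transitions to another phase in $\mathcal{S}_+$, or jumps directly to some $l \in \mathcal{S}_-$, while tracking the level increment $c_{\varphi}\,dt$. To match the resulting functional equation to the Riccati form, it is convenient to reparametrize by the level rather than the time; this is what produces the factors $C_+^{-1}$ and $|C_-|^{-1}$. Each of the four terms then admits a probabilistic interpretation: $C_+^{-1}A_{+-}$ accounts for a direct jump from $\mathcal{S}_+$ into $\mathcal{S}_-$ at the infinitesimal level $0^+$; $C_+^{-1}A_{++}\Psi$ for a jump among phases of $\mathcal{S}_+$ followed by a later first downcrossing; $\Psi|C_-|^{-1}A_{--}$ for the first downcrossing followed by a transition among phases of $\mathcal{S}_-$ at level $0^-$; and $\Psi|C_-|^{-1}A_{-+}\Psi$ for the first downcrossing, followed by a jump back to $\mathcal{S}_+$ and a further excursion that also returns.

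For minimality, I would use an iterative construction. Set $\Psi^{(0)}=0$ and define $\Psi^{(n+1)}$ by solving the linear equation obtained from the Riccati equation after replacing the quadratic term by $\Psi^{(n)}|C_-|^{-1}A_{-+}\Psi^{(n)}$. Interpret $\Psi^{(n)}$ as the probability of first downcrossing achieved using at most $n$ up-excursions, so that $\Psi^{(n)}\uparrow\Psi$ monotonically. An induction on $n$ then shows that any nonnegative solution $\tilde{\Psi}$ of the Riccati equation dominates each $\Psi^{(n)}$, hence dominates $\Psi$.

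The derivation for $\hat{\Psi}$ is symmetric, obtained by interchanging the roles of $\mathcal{S}_+$ and $\mathcal{S}_-$ (equivalently, by time reversal of the random walk started with the stationary phase distribution). The main obstacle I anticipate lies in the bookkeeping of the first-step argument, in particular the conversion from time increments to level increments that produces the factors $C_+^{-1}$ and $|C_-|^{-1}$, and the careful handling of the direct $\mathcal{S}_+\to\mathcal{S}_-$ transition at the infinitesimal level $0^+$, which contributes the purely linear term $C_+^{-1}A_{+-}$ only in the appropriate limiting sense.
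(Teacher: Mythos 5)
Your proposal takes a genuinely different route from the paper, which supplies no argument at all and merely cites Theorem~1 of Rogers, whose proof rests on a Wiener--Hopf factorization of the free process. The first-step / level-censoring derivation of the Riccati equation together with an iterative lower-bound argument for minimality is the standard probabilistic matrix-analytic alternative; it is more elementary, gives a term-by-term probabilistic reading, and makes the factors $C_+^{-1}$ and $|C_-|^{-1}$ transparent as time-to-level reparametrizations, at the cost of extra bookkeeping compared with the algebraic factorization route.

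Two points need tightening. A one-sided infinitesimal decomposition does not by itself produce all four terms: conditioning on the first event over $[0,dt]$ from level $0$ in an up-phase gives $C_+^{-1}A_{+-}+C_+^{-1}A_{++}\Psi+\Psi U=0$, where $U$ is the generator of the downward-record process, and one needs the companion identity $U=|C_-|^{-1}A_{--}+|C_-|^{-1}A_{-+}\Psi$ (obtained by a symmetric infinitesimal decomposition during the descent) to recover the quadratic form. Your reading of $\Psi|C_-|^{-1}A_{--}$ as ``first downcrossing followed by a transition at level $0^-$'' is therefore not quite coherent if the downcrossing is of level $0$ itself, since the return phase is then already decided; the decomposition must be around an auxiliary level $\epsilon>0$, with the third and fourth terms coming from the first-order expansion of the descent operator $\exp(U\epsilon)$. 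On minimality, the inductive step that any nonnegative solution $\tilde\Psi$ dominates each iterate should invoke explicitly the monotonicity of the Sylvester operator: with $B_1=C_+^{-1}A_{++}$ and $B_2=|C_-|^{-1}A_{--}$ both strict subgenerators, $B_1X+XB_2+Y=0$ has solution $X=\int_0^\infty e^{B_1 t}Ye^{B_2 t}\,dt$, monotone in $Y\ge0$; and the claim $\Psi^{(n)}\uparrow\Psi$ (rather than to a smaller fixed point) rests on the probabilistic identification of the iterates, which remains to be made precise. Finally, for $\hat\Psi$ the argument by exchanging the roles of $\mathcal{S}_+$ and $\mathcal{S}_-$ is the right one; time reversal is unnecessary and, unless the phase process is taken in stationarity, would yield a different object.
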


Define $\vartheta(x)=\tau_{-}(-x)$, $R(x)=\varphi(\vartheta(x))$
for $x\geq0$.
The process $\{R(x):x\geq0\}$  is Markovian, it corresponds to the phase process observed only during those
intervals of time in which $X(t)=\min_{0\leq s\leq t}X(s)$, and is called
the process of \textit{downward records}.
Its generator, the matrix $U$,
may be expressed in terms of the matrix of first
return probability from above as 
\begin{equation*}
U=\left|C_{-}^{-1}\right|A_{--}+\left|C_{-}^{-1}\right|A_{-+}\Psi,
\end{equation*}
see for instance
da Silva Soares and Latouche
 \cite{dasilva2002similarity}.
Similarly, we may define the process observed only during those intervals of time in which
 $X(t)=\max_{0\leq s\leq t}X(s)$
and we call it the process of  \textit{upward records} :
 its generator may be written in terms of the matrix of  first
 return probabilities from below as 
\begin{equation*}
\hat{U}=C_{+}^{-1}A_{++}+C_{+}^{-1}A_{+-}\hat{\Psi}.
\end{equation*}

\section{Erlangization method}\label{SECERL}

The Erlang distribution $T$ with parameters $\nu$ and $L$
may be interpreted as the
time until absorption of $\{{\color{black}\phi(t):t\in\mathbb{R}^{+}\}}$
a Markov chain with $L$ transient stages where the process spends an exponential
time with mean $\nu^{-1}$ in each stage, until an absorbing state.
The infinitesimal generator of 
$\{\phi(t):t\in\mathbb{R}^{+}\}$ is given by $ $the
matrix of order $L$
\[
N=\left[\begin{array}{cccc}
-\nu & \nu\\
 & -\nu & \ddots\\
 &  &  \ddots & \nu\\
 &  &  & -\nu
\end{array}\right]
\]
and $\phi(t)$ is equal to the number of stages that are completed.
The mean of $T$ is $L\nu^{-1}$ and
to approximate a finite time $\theta$,
we choose $\nu=L\theta^{-1}$ for a given $L.$
The variance of $T$ is $\theta^{2}L^{-1}$
and decreases to $0$ as
 $L$ increases to $\infty$.

We construct next the \textit{Erlangized random walk} $\{(X(t),\Phi(t)):t\in\mathbb{R}^{+}\}$
with $\Phi(t)=(\varphi(t),\phi(t))$:
 $\Phi(t)=(i,k)$
 means that at  time $t$,
  the random walk \textit{phase} is $i$
and the Erlang \textit{stage} is $k$,
for $i\in \mathcal{S}$, ${\color{black}k\in\{0,1,...,L-1\}}$.
 We may write the infinitesimal generator of
the \textit{joint phase-and-stage process} $\{\Phi(t):t\in\mathbb{R}^{+}\}$
as follows:
\begin{equation}\label{generatorQ}
Q=\left[\begin{array}{cc|c}
Q_{++} & Q_{+-} & (-N\boldsymbol{1}_{L})\otimes\boldsymbol{1}_{+}\\
Q_{-+} & Q_{--} & (-N\boldsymbol{1}_{L})\otimes\boldsymbol{1}_{-}\\
\hline 0 & 0 & 0
\end{array}\right],
\end{equation}
where $Q_{++}=N\otimes I+I\otimes A_{++},
Q_{+-}=I\otimes A_{+-},
Q_{-+}=I\otimes A_{-+}$,
$Q_{--}=N\otimes I+I\otimes A_{--},$ 
 $\otimes$ denotes
de Kronecker product,
and $\boldsymbol{1}_{+}$,
$\boldsymbol{1}_{-}$
and
$\boldsymbol{1}_{L}$ 
 are a column vectors of ones
of size $|\mathcal{S}_+|$, $|\mathcal{S}_-|$ 
and 
$L$ respectively.
The \textit{Erlangized fluid queue} $\{(Z(t),\Phi(t)):t\in\mathbb{R}^{+}\}$
is constructed similarly.

We shall need here matrices of first return probabilities \textit{before the end of the Erlang period} 
\begin{equation}\label{DefPsiErl}
\boldsymbol{\Psi}_{(i,l)(j,n)}
=
\mathbb{P}
	\left[
	\Phi(\tau_{-}(0))=(j,n)
	|X\left(0\right)=0,
\Phi(0)=(i,l)
\right],
\end{equation}
for $i\in \mathcal{S}_{+}$, $j\in \mathcal{S}_{-}$, and
$0\leq l,n\leq L-1$.
Note that 
$\boldsymbol{\Psi}_{(\cdot,l)(\cdot,n)}$
 does not depend on $l$ and $n$
but only on $n-l$.
It is clear that $\boldsymbol{\Psi}$ 
has an upper triangular block structure,
and that
\begin{equation}\label{BigPSI}
\boldsymbol{\Psi}=\left[\begin{array}{cccccc}
\boldsymbol{\Psi^{(0)}} & \boldsymbol{\Psi^{(1)}} & \boldsymbol{\Psi^{(2)}} &  &  & \boldsymbol{\Psi^{(L-1)}}\\
0 & \boldsymbol{\Psi^{(0)}} & \boldsymbol{\Psi^{(1)}} &  &  & \boldsymbol{\Psi^{(L-2)}}\\
\vdots & 0 & \boldsymbol{\Psi^{(0)}} &  &  & \boldsymbol{\Psi^{(L-3)}}\\
 & \vdots & 0 & \ddots &  & \vdots\\
 &  & \vdots & \ddots\\
 &  &  &  &  & \boldsymbol{\Psi^{(0)}}
\end{array}\right],
\end{equation}
where
\begin{equation}
\boldsymbol{\Psi^{(k)}_{ij}}
=
\mathbb{P}
	\left[
	\Phi(\tau_{-}(0))=(j,k)
	|X\left(0\right)=0,
\Phi(0)=(i,0)
\right].
\end{equation}
 The structure of $\boldsymbol{\hat{\Psi}}$ is similar,
 \begin{equation}\label{DefPsiHatErl}
\boldsymbol{\hat{\Psi}^{(k)}_{ij}}
=
\mathbb{P}
	\left[
	\Phi(\tau_{+}(0))=(j,k)
	|X\left(0\right)=0,
\Phi(0)=(i,0)
\right],
\end{equation}
for $i\in \mathcal{S}_{-}$ , $j\in \mathcal{S}_{+}$,
$0\leq k \leq L-1$.

\begin{Rem}
\rm{
In the analysis of fluid models, one of the matrices
 $\Psi$ and $\hat{\Psi}$
 defined in 
 	(\ref{DefdePsi}) and 
 	(\ref{DefdePsiHat}),
 is stochastic, that is
 	$
\Psi\boldsymbol{1}=\boldsymbol{1},
$
or
$
\hat{\Psi}\boldsymbol{1}=\boldsymbol{1},
$
or both.
Here, the matrices 
$\boldsymbol{\Psi}$
and $\boldsymbol{\hat{\Psi}}$ 
defined in
(\ref{DefPsiErl})
and
	(\ref{DefPsiHatErl})
are sub-stochastic, i.e.
$
\boldsymbol{\Psi1}<\boldsymbol{1}$
and
$
\boldsymbol{\hat{\Psi}1}<\boldsymbol{1}
$,
because they are first return probabilities before $T$.
}
\end{Rem}

The matrices $\boldsymbol{\Psi}$
and $\boldsymbol{\hat{\Psi}}$ 
are recursively determined as follows.
\begin{Thm}\label{RiccatiPsiEq}(Ramaswami et al. {\cite{ramaswami2008erlangization}, Thm 4)}
\begin{enumerate}[(a)]
\item
The matrix $\boldsymbol{\Psi^{(0)}}$ is the minimal nonnegative solution of 
\begin{equation}\label{PsiZeroEq}
\boldsymbol{\Psi^{(0)}}\left\vert C_{-}\right\vert ^{-1}A_{-+}\boldsymbol{\Psi^{(0)}}+C_{+}^{-1}(A_{++}-\nu I)\boldsymbol{\Psi^{(0)}}+\boldsymbol{\Psi^{(0)}}\left\vert C_{-}\right\vert ^{-1}(A_{--}-\nu I)+C_{+}^{-1}A_{+-}=0,
\end{equation}
 and for $1\leq k\leq L-1$,
 $\boldsymbol{\Psi^{(k)}}$
 is the solution of the linear system
\begin{align*}
\boldsymbol{\Psi^{(k)}}\left\vert C_{-}\right\vert ^{-1}\left(A_{--}-\nu I\right)
&+ \sum_{n=0}^{k}\boldsymbol{\Psi^{(n)}}\left\vert C_{-}\right\vert ^{-1}A_{-+}\boldsymbol{\Psi^{(k-n)}}
  \\
+C_{+}^{-1}(A_{++}-\nu I)\boldsymbol{\Psi^{(k)}}
&+ \nu\left(C_{+}^{-1}\boldsymbol{\Psi^{(k-1)}}+\boldsymbol{\Psi^{(k-1)}}\left\vert C_{-}\right\vert ^{-1}\right)  =  0.
\end{align*}
\item 
 The matrix $\boldsymbol{\hat{\Psi}^{(0)}}$ is the minimal nonnegative solution of
\begin{equation}\label{PsiHatEq}
\boldsymbol{\hat{\Psi}^{(0)}} C_{+}^{-1}A_{+-}\boldsymbol{\hat{\Psi}^{(0)}}
+\left\vert C_{-}\right\vert^{-1}(A_{--}-\nu I)\boldsymbol{\hat{\Psi}^{(0)}}
+\boldsymbol{\hat{\Psi}^{(0)}} C_{+} ^{-1}(A_{++}-\nu I)
+\left\vert C_{-}\right\vert^{-1} A_{-+}=0,
\end{equation}
and for $1\leq k\leq L-1$,
\begin{align*} 
\boldsymbol{\hat{\Psi}^{(k)}} C_{+} ^{-1}(A_{++}-\nu I)
&+\sum_{n=0}^{k}
\boldsymbol{\hat{\Psi}^{(n)}} C_{+}^{-1}A_{+-}\boldsymbol{\hat{\Psi}^{(k-n)}}
\\
+\left\vert C_{-}\right\vert^{-1}(A_{--}-\nu I)\boldsymbol{\hat{\Psi}^{(k)}}
&+\nu\left(\left\vert C_{-}\right\vert^{-1}\boldsymbol{\hat{\Psi}^{(k-1)}}
+\boldsymbol{\hat{\Psi}^{(k-1)}}C_{+} ^{-1}\right)  
=  0.
\end{align*}
\end{enumerate}
\hfill $\square$
\end{Thm}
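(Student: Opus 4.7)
The plan is to lift the standard Riccati characterization of the first-return matrix to the Erlangized state space $\mathcal{S}\times\{0,\ldots,L-1\}$ and then extract the block recursions by exploiting the Toeplitz structure of $\boldsymbol{\Psi}$ displayed in \eqref{BigPSI}. Viewing $\{(X(t),\Phi(t))\}$ as a Markovian random walk on this larger space, with sub-generator given by the transient block of $Q$ from \eqref{generatorQ} and fluid rates inherited from $I_L\otimes C$, one may apply the earlier Riccati theorem verbatim (its derivation never uses zero row sums, only invertibility of the positive block of the fluid-rate matrix). This yields
\begin{equation*}
(I\otimes C_+^{-1})Q_{+-}+(I\otimes C_+^{-1})Q_{++}\boldsymbol{\Psi}+\boldsymbol{\Psi}(I\otimes|C_-|^{-1})Q_{--}+\boldsymbol{\Psi}(I\otimes|C_-|^{-1})Q_{-+}\boldsymbol{\Psi}=0,
\end{equation*}
with $\boldsymbol{\Psi}$ the minimal nonnegative solution.

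Next I would decompose $N=-\nu I+\nu E$, where $E$ is the nilpotent shift matrix with ones on the first super-diagonal, and write $\boldsymbol{\Psi}=\sum_{k=0}^{L-1}E^{k}\otimes\boldsymbol{\Psi^{(k)}}$, which is exactly the block pattern of \eqref{BigPSI}. Kronecker calculus gives $(M_1\otimes M_2)(N_1\otimes N_2)=(M_1N_1)\otimes(M_2N_2)$ and $E^{j}E^{k}=E^{j+k}$, with the convention $E^{k}=0$ for $k\geq L$. Expanding the four terms of the lifted Riccati equation and matching the coefficient of $E^{0}$ produces \eqref{PsiZeroEq}, the $-\nu I$ parts of $N$ contributing the shifts $A_{\pm\pm}-\nu I$. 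For $k\geq 1$, the coefficient of $E^{k}$ collects the $\nu E$ part of $N$ acting on $\boldsymbol{\Psi^{(k-1)}}$, producing the drift $\nu(C_{+}^{-1}\boldsymbol{\Psi^{(k-1)}}+\boldsymbol{\Psi^{(k-1)}}|C_{-}|^{-1})$, while the quadratic term $\boldsymbol{\Psi}(\cdots)\boldsymbol{\Psi}$ contributes the convolution $\sum_{n=0}^{k}\boldsymbol{\Psi^{(n)}}|C_{-}|^{-1}A_{-+}\boldsymbol{\Psi^{(k-n)}}$; this matches the linear system of part (a) exactly.

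Minimality of $\boldsymbol{\Psi^{(0)}}$ among nonnegative solutions of \eqref{PsiZeroEq} follows from the minimality of $\boldsymbol{\Psi}$ on the lifted space: any strictly smaller nonnegative $\boldsymbol{\Psi^{(0)}}$ could be extended, via the linear recursion just derived, into a strictly smaller nonnegative solution $\boldsymbol{\Psi}$ of the lifted Riccati equation, a contradiction. Part (b) is handled by the symmetric argument, either by repeating the Kronecker expansion starting from the Riccati equation for $\hat{\Psi}$ applied to the Erlangized system, or by invoking the duality between upward and downward records. The main technical obstacle is the bookkeeping of the Kronecker expansion: one must verify that the diagonal $-\nu I$ and super-diagonal $\nu E$ contributions of $N$ assemble themselves in the correct positions to yield the stated $(A_{\pm\pm}-\nu I)$ coefficients of $\boldsymbol{\Psi^{(k)}}$ and the $\nu$-weighted driving terms in $\boldsymbol{\Psi^{(k-1)}}$, and to confirm that no boundary effect at $k=L-1$ spoils the recursion (it does not, since $E^{L}=0$ simply truncates the series).
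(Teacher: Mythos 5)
The paper does not prove this theorem but cites Ramaswami et al.\ \cite{ramaswami2008erlangization}, Theorem 4, so there is no internal proof to compare against. Your Kronecker-expansion approach is, as far as the derivation of the equations is concerned, correct and clean: decomposing $N = -\nu I + \nu E$ with $E$ the nilpotent shift, writing $\boldsymbol{\Psi}=\sum_{k=0}^{L-1}E^{k}\otimes\boldsymbol{\Psi^{(k)}}$, substituting into the lifted Riccati equation with $Q_{++}=N\otimes I + I\otimes A_{++}$ (and similarly for the other blocks), and matching coefficients of $E^{k}$ reproduces \eqref{PsiZeroEq} at order $k=0$ and the linear system at order $k\geq1$, including the $\nu$-driven terms from $NE^{k}=-\nu E^{k}+\nu E^{k+1}$ and the convolution from $\boldsymbol{\Psi}(\cdots)\boldsymbol{\Psi}$. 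The observation that Rogers' Riccati theorem extends to defective (sub-)generators, which is needed since the transient block of $Q$ does not have zero row sums, is also correct.

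Where the argument has a gap is minimality. You claim that a strictly smaller nonnegative solution $\Theta^{(0)}$ of \eqref{PsiZeroEq} could be ``extended, via the linear recursion just derived, into a strictly smaller nonnegative solution of the lifted Riccati equation,'' but nothing guarantees that the Sylvester-type linear systems defining $\Theta^{(1)},\dots,\Theta^{(L-1)}$ produce nonnegative blocks, nor that the resulting $\Theta$ is entrywise below $\boldsymbol{\Psi}$; without those facts there is no contradiction with the minimality of $\boldsymbol{\Psi}$. A shorter and airtight route is available and avoids the lifted space entirely: equation \eqref{PsiZeroEq} \emph{is} the Riccati equation of the fluid model driven by the defective generator $A-\nu I$ (the original model killed at rate $\nu$), and by \eqref{DefPsiErl} the matrix $\boldsymbol{\Psi^{(0)}}$ is precisely the first-return matrix of that killed model, so its minimality follows directly from the defective version of Rogers' theorem that you already invoke. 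Using that observation for part (a) at $k=0$, and the analogous one for part (b), makes the minimality claims rigorous while leaving the rest of your Kronecker computation intact.
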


We may define as in the previous Section, the matrices $\mathcal{\boldsymbol{U}}$
and $\mathcal{\hat{\boldsymbol{U}}}$
as the infinitesimal generators of
the processes of the monotone records before $T$.
These
matrices are respectively given by 
\[
\boldsymbol{U}=\left(I\otimes\left|C_{-}\right|\right)^{-1}Q_{--}+\left(I\otimes\left|C_{-}\right|\right)^{-1}Q_{-+}\boldsymbol{\Psi},
\]
for the generator of the downward records and by 
\[
\hat{\boldsymbol{U}}=\left(I\otimes\left|C_{+}\right|\right)^{-1}Q_{++}+\left(I\otimes\left|C_{+}\right|\right)^{-1}Q_{+-}\hat{\boldsymbol{\Psi}},
\]
for the generator of the upwards records.
They
have the same
block-triangular
structure as $\boldsymbol{\Psi}$ and
$\hat{\boldsymbol{\Psi}}$; for instance,
\begin{equation}
\boldsymbol{U}=\left[\begin{array}{cccccc}
\boldsymbol{U}^{(0)} & \boldsymbol{U}{}^{(1)} & \boldsymbol{U}^{(2)} &  &  & \boldsymbol{U}{}^{(L-1)}\\
0 & \boldsymbol{U}^{(0)} & \boldsymbol{U}{}^{(1)} &  &  & \boldsymbol{U}{}^{(L-2)}\\
\vdots & 0 & \boldsymbol{U}{}^{(0)} &  &  & \boldsymbol{U}^{(L-3)}\\
 & \vdots & 0 & \ddots &  & \vdots\\
 &  & \vdots & \ddots\\
 &  &  &  &  & \boldsymbol{U}\mathcal{}^{(0)}
\end{array}\right].
\label{StructureU}
\end{equation}

In the next sections, we determine 
the  probability distribution 
of the level random variable
evaluated at time $T$,
where $T$ has an 
Erlang$(L,L\theta^{-1})$
distribution.
We show in the next lemma that such distribution converge,
as $L \rightarrow \infty$,
to those of the same random variable evaluated at $\theta$.

It will be usefull in the sequel to use the notation 
$\mathcal{Y}_k$ for the sum 
\begin{equation}\label{SumKrv}
\mathcal{Y}_k = \sum_{n=1}^{k}Y_n
\end{equation}
where the random variables $Y_n$,
for $n\geq1$,
are i.i.d.
exponentially distributed random variables
with parameter $\nu$.
In particular, $\mathcal{Y}_L=T$.

We write $\mathbb{P}_{i}^{a}\left[\cdot\right]$
 to denote the conditional probabilities $\mathbb{P}\left[\cdot|X(0)=a,\varphi(0)=i\right]$
for $i\in \mathcal{S}$ and $a\in\mathbb{R}$.
We define the vector of probabilities $\boldsymbol{r}(\cdot,\cdot)$
such that 
\begin{equation}\label{DefdeR}
 r_{i}(y-a,k)= \mathbb{P}^{a}_{i}
 	\left[X(\mathcal{Y}_k)\leq y
	\right],
 \end{equation}
for any $y\in\mathbb{R}$ and $k\in \{1,...,L\}$.
This is also the probability that $X(T)\leq y$
given that at time $0$,
the joint phase-and-stage process
is $(i,L-k)$, i.e. they are $k$ stages left before $T$.

\begin{LEM}
The distribution of the level reached at time
$T\sim Erl(L,L\theta^{-1})$, is such that 
\[
\underset{L\rightarrow\infty}{\lim}r_{i}(x,L)=\mathbb{P}_{i}^{0}\left[X(\theta)\leq x\right],
\]
for
	$i\in \mathcal{S}$,
	$x\in\mathbb{R}$.
\end{LEM}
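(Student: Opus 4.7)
The plan is to exploit that $T$ concentrates at $\theta$ as $L\to\infty$, combined with the almost-sure Lipschitz continuity in time of the sample paths of $X$. This reduces the lemma to a standard convergence-in-distribution argument with an explicit quantitative sandwich.

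First I would verify the concentration. Since $T=\mathcal{Y}_L$ is a sum of $L$ i.i.d.\ exponentials with parameter $\nu=L\theta^{-1}$, its mean is $\theta$ and its variance is $\theta^2/L$. Chebyshev's inequality then yields $\mathbb{P}[|T-\theta|>\eta]\leq \theta^2/(L\eta^2)\to 0$ for every $\eta>0$, so $T\to\theta$ in probability.

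Second, I would use the representation (\ref{XEvolution}) to observe that the sample paths are Lipschitz in time: $|X(s)-X(t)|\leq c_{\max}|s-t|$ almost surely, where $c_{\max}=\max_{i\in\mathcal{S}}|c_i|$. Combining this with the first step gives, for every $\epsilon>0$, the sandwich
\[
\mathbb{P}^0_i[X(\theta)\leq x-\epsilon]-\mathbb{P}[|T-\theta|>\epsilon/c_{\max}]\leq r_i(x,L)\leq \mathbb{P}^0_i[X(\theta)\leq x+\epsilon]+\mathbb{P}[|T-\theta|>\epsilon/c_{\max}].
\]
Letting $L\to\infty$ kills the tail terms, and then letting $\epsilon\to 0$ delivers the claimed limit at every continuity point of the limiting distribution function $F_i(x):=\mathbb{P}^0_i[X(\theta)\leq x]$.

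The main obstacle is that $F_i$ is not continuous everywhere: the event that $\varphi$ stays in state $i$ throughout $[0,\theta]$ has positive probability $e^{A_{ii}\theta}$, placing an atom of $F_i$ at $x=c_i\theta$, and similar atoms arise for any path of $\varphi$ with positive probability. At such $x$ the sandwich only gives $F_i(x^-)\leq \liminf_L r_i(x,L)\leq \limsup_L r_i(x,L)\leq F_i(x)$, so pointwise equality fails in general at the (at most countable) set of discontinuities. The lemma therefore must be read as weak convergence, with the pointwise statement holding on the co-countable set of continuity points of $F_i$; this is the form of convergence that will be needed in the remainder of the paper.
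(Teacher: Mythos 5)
Your argument follows essentially the same route as the paper's: concentration of $T$ around $\theta$ via Chebyshev, followed by a continuity-in-time argument to pass from $X(T)$ to $X(\theta)$. The paper encapsulates the continuity step in the assertion that $F(t)=\mathbb{P}_i^0[X(t)\leq x]$ is continuous, whereas you derive it more explicitly from the almost-sure Lipschitz bound $|X(s)-X(t)|\leq c_{\max}|s-t|$ and a sandwich; these are two phrasings of the same idea.

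Your closing caveat is, however, a genuine correction to the paper. Decomposing $F$ by the number of jumps of $\varphi$ in $[0,t]$, the zero-jump term is $e^{A_{ii}t}\mathbf{1}\{c_it\leq x\}$, which is discontinuous in $t$ at $t=x/c_i$; all the higher-order terms are continuous because the jump epochs have densities. Hence $F$ is continuous at $t=\theta$ for every $x\neq c_i\theta$, but has a jump of size $e^{A_{ii}\theta}$ at $x=c_i\theta$. Since the Erlang mass splits symmetrically across $\theta$ as $L\to\infty$, at that single $x$ one gets $\lim_L r_i(x,L)=\tfrac12\left(F(\theta^-)+F(\theta^+)\right)\neq F(\theta)$, so the lemma is strictly true only for $x\neq c_i\theta$. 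The paper's unqualified statement that ``$F(\cdot)$ is continuous'' quietly skips this. One small inaccuracy in your write-up: the distribution of $X(\theta)$ started from $(0,i)$ has exactly one atom, at $c_i\theta$; paths of $\varphi$ with one or more jumps occur with probability zero individually and contribute an absolutely continuous part, so there is not a family of atoms but a single exceptional point per initial phase.
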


\begin{proof}

We have to show that for any given $x\in\mathbb{R}$ and
for all $\epsilon>0$,
there exists $k_{0}\in\mathbb{N}$,
such that for all $L\geq k_{0}$,
\[
 \left|r_{i}(x,L)-\mathbb{P}_{i}^{0}\left[X(\theta)\leq x\right]\right|<\epsilon.
\]
To simplify the presentation,
define
 $F(t)=\mathbb{P}_{i}^{0}\left[X(t)\leq x\right]$.
 As $F(\cdot)$ is continuous,
  there exists $\delta>0$, such that 
\begin{equation}
\left|t-\theta\right|<\delta\Rightarrow\left|F(t)-F(\theta)\right|<\frac{\epsilon}{2}.\label{eq:continuity}
\end{equation}
We have
\begin{align}
\left|F(T)-F(\theta)\right| & =  \left|\int_{0}^{\infty}F(t)G(dt)-F\left(\theta\right)\right|,\\
 \intertext{where $G\left(\cdot\right)$ is the distribution function of $T$,}
& \leq
  \int_{0}^{\theta-\delta}\left|F(t)-F\left(\theta\right)\right|G(dt)
  +\int_{\theta-\delta}^{\theta+\delta}\left|F(t)-F\left(\theta\right)\right|G(dt) \nonumber \\
& \quad +  \int_{\theta+\delta}^{\infty}\left|F(t)-F\left(\theta\right)\right|G(dt)
  \label{eq:integralG}
\end{align}
Thanks to (\ref{eq:continuity}), the middle term 
in the right-hand side
of (\ref{eq:integralG}) is bounded by
$\epsilon\mathbb{P}\left[T\in\left[\theta-\delta,\theta+\delta\right]\right]/2
\leq \epsilon/2$.
Define $k_{0}>\frac{2\theta^{2}}{\epsilon\delta^{2}}$ fixed. 
If  $L\geq k_{0}$, then the
other two terms in  (\ref{eq:integralG})  together are bounded
by
\[
\mathbb{P}\left[\left|T-\theta\right|>\delta\right]
<\frac{\theta^{2}}{L\delta^{2}},
\]
by Chebyshev's inequality. We thus obtain the following inequality
\begin{eqnarray*}
\left|r_{i}(x,L)-\mathbb{P}_{i}^{0}\left[X(\theta)\leq x\right]\right| & \leq & \frac{\epsilon}{2}+\frac{\theta^{2}}{L\delta^{2}}
 \quad \leq \quad \epsilon,
\end{eqnarray*}
 which proves the claim.
\end{proof}

Convergence of the other marginal probability distributions
and the joint distributions studied in this paper may be proved by following the same approach. 
In the remainder of the paper, 
we assume that $L$ is fixed.

\section{Time-dependent distribution
of the random walk}\label{TDDRW}
\subsection{Preliminaries}

We consider the fluid model without boundary.
As we saw in 
(\ref{DefdeR}),
the computation of $\boldsymbol{r}$ does not depend on
$a$ and $y$
but on their difference only.
Therefore, to simplify the writing, we suppose that the initial level is $0$.
In Theorem \ref{ThmProbaAbov},
we give recursive equations
for the probabilities of the level being
positive or negative at time $\mathcal{Y}_k$.

The complementary probability of any probability 
$\mathbb{P}\left[\cdot\right]$
is denoted by $\overline{\mathbb{P}}\left[\cdot\right]$,
i.e.
$\overline{\mathbb{P}}\left[\cdot\right]=1-\mathbb{P}\left[\cdot\right]$.
 Denote by $\boldsymbol{h}{(k)}$ the probability
vector that the level is above  $0$ after an Erlang
time period with $k$ stages,
 given that the process starts in level $0$ in
 phase  $i\in \mathcal{S}_{+}$:
\begin{equation}\label{hDefinition}
{h_{i}}{(k)}
=
\mathbb{P}^{0}_{i}[X(\mathcal{Y}_k)>0].
\end{equation}
Similarly,
 $\boldsymbol{\hat{h}}{(k)}$
 is  the probability
vector that the level is below  $0$ after an Erlang
time period with $k$ stages, given that the process starts in
 phase  $i\in \mathcal{S}_{-}$,
in  level $0$ : 
\begin{equation}
{\hat{h}_{i}{(k)}}
=
\mathbb{P}_{i}^{0}[X(\mathcal{Y}_k)<0].
\end{equation}

In the proof of next theorem,
we use the following terminology : we write
that there is a {\textit{down-and-up-crossing}}
when the process, starting in a phase of
	$\mathcal{S}_{+}$ 
at a given level
	$y\in\mathbb{R}$,
returns in a finite time to the same level $y$
in some phase of $\mathcal{S}_{-}$,
crosses that level $y$,
spends some amount of time below $y$ 
and
crosses that level again in some
phase of $\mathcal{S}_{+}$.
Similarly, we define the {\textit{up-and-down-crossing}}.

\begin{Thm}{\label{ThmProbaAbov}} 
For $k=1,$ one has
\begin{align}
\boldsymbol{h}{(1)}&=(I-\boldsymbol{\Psi^{(0)}\hat{\Psi}^{(0)}})^{-1}(\boldsymbol{1}-\boldsymbol{\Psi^{(0)}}\boldsymbol{1}),\label{eq:h1}
\\
\boldsymbol{\hat{h}}{(1)}&=
(I-\boldsymbol{\hat{\Psi}^{(0)}\Psi^{(0)}})^{-1}
(\boldsymbol{1}-\boldsymbol{\hat{\Psi}^{(0)}}\boldsymbol{1}),
\label{eq:hhat1}
\end{align}
and for $k>1$, one has
\begin{align}
\boldsymbol{h}{(k)}
&=(I-\boldsymbol{\Psi}^{(0)}\boldsymbol{\hat{\Psi}}^{(0)})^{-1}
\Big(
\boldsymbol{1}-\sum_{n=0}^{k-1}
\boldsymbol{\Psi}^{(n)}\boldsymbol{1}
+\sum_{{\underset{1 \leq m+n \leq k-1}
{0\leq m,n}}}
\boldsymbol{\boldsymbol{\Psi}}^{(m)}{\boldsymbol{\hat{\Psi}}}^{(n)}\boldsymbol{h}{(k-m-n)}
\Big),
\label{eq:hL}
\\
\boldsymbol{\hat{h}}{(k)}
&=
(I-\boldsymbol{\hat{\Psi}}^{(0)}\boldsymbol{\Psi}^{(0)})^{-1}
\Big(
\boldsymbol{1}
-\sum_{n=0}^{k-1}
\boldsymbol{\hat{\Psi}}^{(n)}\boldsymbol{1}
+\sum_{{\underset{1 \leq m+n \leq k-1}{0\leq m,n}}}\boldsymbol{\boldsymbol{\hat{\Psi}}}^{(m)}
{\boldsymbol{{\Psi}}}^{(n)}
\boldsymbol{\hat{h}}{(k-m-n)}
\Big).
\label{eq:hhatL}
\end{align}
\end{Thm}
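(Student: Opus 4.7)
The plan is to condition on the successive crossings of level zero by $X$ during $[0,T]$, using the strong Markov property of the joint phase-and-stage process $\{(X(t),\Phi(t))\}$ together with the spatial homogeneity of the random walk. Since the process starts at $X(0)=0$ in a phase of $\mathcal{S}_+$, the first crossing of level zero (if any occurs before Erlang absorption) is necessarily a down-crossing, the next one an up-crossing, and so on; moreover, the transitions between two consecutive crossings, together with the bookkeeping of how many Erlang stages have elapsed, are exactly what the matrices $\boldsymbol{\Psi}^{(\cdot)}$ and $\boldsymbol{\hat{\Psi}}^{(\cdot)}$ of (\ref{DefPsiErl}) and (\ref{DefPsiHatErl}) encode.

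For $k=1$ I would split $\{X(\mathcal{Y}_1)>0\}$ into two cases. Either no down-crossing occurs in time $\mathcal{Y}_1$, whose probability vector is $\boldsymbol{1}-\boldsymbol{\Psi^{(0)}}\boldsymbol{1}$ and on which the level stays strictly positive throughout. Or the first down-crossing does occur (contribution $\boldsymbol{\Psi^{(0)}}$, the only admissible block since only one Erlang stage is available); by the strong Markov property and memorylessness the process then restarts from level zero in a phase of $\mathcal{S}_-$ with one stage still remaining, and ending above zero then requires a further up-crossing (contribution $\boldsymbol{\hat{\Psi}^{(0)}}$) followed by the original event, giving $\boldsymbol{\Psi^{(0)}\hat{\Psi}^{(0)}}\boldsymbol{h}(1)$. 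Equating yields $\boldsymbol{h}(1)=\boldsymbol{1}-\boldsymbol{\Psi^{(0)}}\boldsymbol{1}+\boldsymbol{\Psi^{(0)}\hat{\Psi}^{(0)}}\boldsymbol{h}(1)$, which solves to (\ref{eq:h1}).

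For $k>1$ the same decomposition applies, except that the first down-crossing may now consume $m\in\{0,1,\dots,k-1\}$ Erlang stages (contribution $\boldsymbol{\Psi^{(m)}}$) and the subsequent first up-crossing may consume a further $n\in\{0,1,\dots,k-m-1\}$ stages (contribution $\boldsymbol{\hat{\Psi}^{(n)}}$); after such a down-and-up-crossing the process restarts in $\mathcal{S}_+$ at level zero with $k-m-n$ stages still to run, contributing $\boldsymbol{h}(k-m-n)$ to the probability of ending above zero. The alternative in which no down-crossing occurs has probability $\boldsymbol{1}-\sum_{n=0}^{k-1}\boldsymbol{\Psi^{(n)}}\boldsymbol{1}$. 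Summing over $m,n$ with $0\leq m+n\leq k-1$, isolating the self-referential term $m=n=0$ on the left-hand side, and inverting $I-\boldsymbol{\Psi^{(0)}\hat{\Psi}^{(0)}}$ produces (\ref{eq:hL}). Identities (\ref{eq:hhat1}) and (\ref{eq:hhatL}) then follow from the symmetric argument in which the roles of $\mathcal{S}_+$ and $\mathcal{S}_-$, and of $\boldsymbol{\Psi}$ and $\boldsymbol{\hat{\Psi}}$, are swapped.

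The main obstacle is the careful bookkeeping of events: one must verify that ``no down-crossing'' and the disjoint union over possible $(m,n)$-outcomes of the first down-and-up pair form the right decomposition of $\{X(\mathcal{Y}_k)>0\}$, noting that the branch ``first down-crossing but no subsequent up-crossing before absorption'' contributes nothing because the level then stays negative until absorption. One also needs the matrix $I-\boldsymbol{\Psi^{(0)}\hat{\Psi}^{(0)}}$ to be invertible; this is justified by the Remark, since $\boldsymbol{\Psi^{(0)}}$ and $\boldsymbol{\hat{\Psi}^{(0)}}$ are strictly sub-stochastic, so the spectral radius of their product is less than one and the Neumann series converges.
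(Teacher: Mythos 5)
Your proposal is correct and follows essentially the same route as the paper: decompose on whether the process stays above level $0$ throughout versus makes a down-and-up return (tracking Erlang stages through $\boldsymbol{\Psi}^{(m)}$ and $\boldsymbol{\hat{\Psi}}^{(n)}$), then isolate the $(m,n)=(0,0)$ term and invert $I-\boldsymbol{\Psi}^{(0)}\boldsymbol{\hat{\Psi}}^{(0)}$. Your additional remarks -- that the ``down-crossing but no up-crossing before absorption'' branch contributes nothing, and that sub-stochasticity guarantees invertibility -- are points the paper leaves implicit, but they are not a different argument.
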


\begin{proof}

For $k=1,$ the probability that the level is above the initial level
at the end of an exponential period of time is the sum of the following
probabilities 
\begin{eqnarray*}
\boldsymbol{h}{(1)} & = & \boldsymbol{1}-\boldsymbol{\Psi^{(0)}}\boldsymbol{1}
+\boldsymbol{\Psi^{(0)}}\boldsymbol{\hat{\Psi}^{(0)}}
\boldsymbol{h}{(1)},
\end{eqnarray*}
Indeed,
 $\boldsymbol{1}-\boldsymbol{\Psi^{(0)}}\boldsymbol{1}$
 is the probability that the process
 remains above  level $0$
 without interruption until
 $\mathcal{Y}_1$
 and 
$ \boldsymbol{\Psi^{(0)}}\boldsymbol{\hat{\Psi}^{(0)}}
\boldsymbol{h}{(1)}$
is the probability that the process makes an up-and-down-crossing before  $\mathcal{Y}_1$
and that at the end of the period, the level is above $0$.
As the matrices $\boldsymbol{\Psi}$
and $\boldsymbol{\hat{\Psi}}$ are sub-stochastic,
the inverse of
$(I-\boldsymbol{\Psi^{(0)}\hat{\Psi}^{(0)}})$
exists
and 
(\ref{eq:h1}) is proved.
The proof for
(\ref{eq:hhat1})
is similar.

For $k>1,$ we have
\begin{equation}
\boldsymbol{h}(k)=\left(\boldsymbol{1}-\sum_{n=0}^{k-1}
\boldsymbol{\Psi^{(n)}}
\boldsymbol{1}\right)+\chi^{(k)},
\end{equation}
where the first bracket is the probability that the level remains
above  $y$ during the whole Erlang time period and  $\chi^{(k)}$ denotes the probability that there is at least
one down-and-up-crossing before $T$.
This term $\chi^{(k)}$ can be decomposed as follows
\begin{equation}
\chi^{(k)}=\sum_{\overset{0\leq m,n }{m+n\leq k-1}}\boldsymbol{\Psi^{(m)}\hat{\Psi}^{(n)}}\boldsymbol{h}{(k-m-n)}.
\end{equation}
where the $(m,n)$-th term means that
a down-crossing occurs during the $m$-th stage of
the Erlang, with probability $\boldsymbol{\Psi^{(m)}}$, with $m\in\{0,...,k-1\}$;
then the
level makes an up-crossing
$n$ stages later
 with probability $\boldsymbol{\hat{\Psi}^{(n)}}$
and $n\in\{0,...,k-m-1\}$ ; 
finally, there remains $k-m-n$
stages, so we have to multiply these probabilities by $ $$\boldsymbol{h}{(k-m-n)}$. 
We thus obtain $\boldsymbol{h}{(k)}$ by solving the following equation
\begin{align}
\boldsymbol{h}{(k)}
&=\boldsymbol{1}
-\sum_{n=0}^{k-1}\boldsymbol{\Psi^{(n)}}\boldsymbol{1}
+\sum_{{\overset{0\leq m,n}{{0 \leq m+n\leq k-1}}}}\boldsymbol{\Psi}^{(m)}\boldsymbol{\hat{\Psi}}^{(n)}\boldsymbol{h}{(k-m-n)}. \label{eqlinaer}
\end{align}
Thus (\ref{eq:hL}) is proved
and the proof of 
(\ref{eq:hhatL})
may be done similarly.
\end{proof}

With the next proposition, we show that
once $\boldsymbol{{h}}{(k)}$ is known for all 
$k\in\{1,...,L\}$,
$\boldsymbol{\hat{h}}{(k)}$ is  easily determined, for any $k\in\{1,...,L\}$, and vice versa.

\begin{Prop}\label{Linkhhhat}
One has
\begin{align}
\boldsymbol{\hat{h}}{(k)}&=
\boldsymbol{1}-
\sum_{n=0}^{k-1}
\boldsymbol{\hat{\Psi}^{(n)}}\boldsymbol{{h}}{(k-n)},
\label{HKandHHAT}
\\
\boldsymbol{{h}}{(k)}&=
\boldsymbol{1}-
\sum_{n=0}^{k-1}
\boldsymbol{{\Psi}^{(n)}}\boldsymbol{\hat{{h}}}{(k-n)}.
\label{HHATKandH}
\end{align} 
for all $k$.
\end{Prop}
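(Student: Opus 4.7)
The two identities are probabilistic in nature and mirror each other, so my plan is to prove (\ref{HKandHHAT}) in detail and obtain (\ref{HHATKandH}) by the symmetric argument. The guiding idea is to reformulate $\boldsymbol{\hat{h}}(k)$ through its complement and then condition on the first upcrossing of the initial level~$0$, exactly paralleling the down-and-up-crossing decomposition already used in the proof of Theorem \ref{ThmProbaAbov}.

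Starting from level $0$ in a phase $i \in \mathcal{S}_-$, the random walk immediately enters negative territory, so the event $\{X(\mathcal{Y}_k) > 0\}$ coincides with $\{\tau_+(0) < \mathcal{Y}_k\} \cap \{X(\mathcal{Y}_k) > 0\}$. I would partition this event according to the pair $(j,n)$ giving the phase and the number of completed Erlang stages at the time $\tau_+(0)$; by the very definition (\ref{DefPsiHatErl}), this is governed by $\boldsymbol{\hat{\Psi}}^{(n)}_{ij}$. The strong Markov property applied at $\tau_+(0)$, combined with memorylessness of the exponential stages, tells me that the time from $\tau_+(0)$ until the full $\mathcal{Y}_k$ has elapsed is itself an Erlang$(k-n,\nu)$ random variable independent of the past, so the conditional probability of $X(\mathcal{Y}_k) > 0$ given an upcrossing into phase $j \in \mathcal{S}_+$ after $n$ stages is exactly $h_j(k-n)$. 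Summing over $(j,n)$ yields $\mathbb{P}^0_i[X(\mathcal{Y}_k) > 0] = \sum_{n=0}^{k-1}(\boldsymbol{\hat{\Psi}}^{(n)}\boldsymbol{h}(k-n))_i$, and the complementary relation $\boldsymbol{\hat{h}}(k) = \boldsymbol{1} - \mathbb{P}^0_i[X(\mathcal{Y}_k) > 0]$ then delivers (\ref{HKandHHAT}); the boundary event $\{X(\mathcal{Y}_k) = 0\}$ carries probability zero because $\mathcal{Y}_k$ is absolutely continuous and independent of the piecewise-linear level process.

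The main subtlety I expect is the reinterpretation of $\boldsymbol{\hat{\Psi}}^{(n)}$ outside the context in which it was introduced. The matrix was defined relative to an Erlangized random walk with a fixed total of $L$ stages, whereas I am invoking it here inside an Erlang time with only $k \leq L$ stages remaining. The point worth spelling out is that $\boldsymbol{\hat{\Psi}}^{(n)}_{ij}$ records only the joint event that exactly $n$ clock ticks fire before $\tau_+(0)$ and the phase at $\tau_+(0)$ is $j$; this is an event measurable with respect to the history up to $\tau_+(0)$, so its probability does not depend on $L$ as long as $L > n$. The same remark explains why the sum terminates at $n = k-1$: if $n \geq k$, then $\tau_+(0) \geq \mathcal{Y}_k$ so that $\{X(\mathcal{Y}_k) > 0\}$ cannot occur. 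Finally, (\ref{HHATKandH}) is obtained by interchanging the roles of $\mathcal{S}_+$ and $\mathcal{S}_-$, replacing the first upcrossing by the first downcrossing and $\boldsymbol{\hat{\Psi}}^{(n)}$ by $\boldsymbol{\Psi}^{(n)}$ throughout.
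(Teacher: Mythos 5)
Your proposal is correct and takes essentially the same approach as the paper: the paper simply states in one line that $\sum_{n=0}^{k-1}\boldsymbol{\hat{\Psi}}^{(n)}\boldsymbol{h}(k-n)$ is the probability that the level is positive at $\mathcal{Y}_k$ when $\varphi(0)\in\mathcal{S}_-$, which is precisely the first-upcrossing decomposition you carry out in detail. The care you take with the zero-probability boundary event $\{X(\mathcal{Y}_k)=0\}$ and the $L$-independence of $\boldsymbol{\hat{\Psi}}^{(n)}$ fills in steps the paper leaves implicit, but the underlying argument is identical.
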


\begin{proof}
In (\ref{HKandHHAT}),
the sum is the probability that the level is positive at $\mathcal{Y}_k$
given that $\varphi(0)\in\mathcal{S}_-$.
Equation (\ref{HHATKandH}) is also immediate.
\end{proof}

In what follows, we
need to decompose the matrix 
$\exp(\boldsymbol{U} x)$
into sub-blocks, we
 use the following notation
$\boldsymbol{W}_{x}=\exp(\boldsymbol{U} x)$ for 
$x\geq0$
and using the structure
(\ref{StructureU})
 of $\boldsymbol{U}$,
 we write
\begin{equation}
\boldsymbol{W}_{x}
=\left[\begin{array}{cccccc}
\boldsymbol{W}^{(0)}_{x} 
	& \boldsymbol{W}^{(1)}_{x} 
	& \boldsymbol{W}^{(2)}_{x} 
	&
	  & 
	   & \boldsymbol{W}^{(L-1)}_{x}\\
0 
	&\boldsymbol{W}^{(0)}_{x} 
	& \boldsymbol{W}^{(1)}_{x}
	 &  
	 &  
	 & \boldsymbol{W}^{(L-2)}_{x}\\
\vdots 
	& 0 
	& \boldsymbol{W}^{(0)}_{x} 
	 &  
	 &  
	 & \boldsymbol{W}^{(L-3)}_{x}\\
 & \vdots 
 	& 0
	 & \ddots 
	 &  
	 & \vdots\\
 &  
 	& \vdots 
	& \ddots\\
 &  &  &  &  & \boldsymbol{W}^{(0)}_{x} 
\end{array}\right],
\label{StructureW}
\end{equation}
The element
$[\boldsymbol{W}^{(n)}_{x}]_{uv}$
is the conditional probability that the random walk
reaches level $0$
in  phase $v\in  \mathcal{S}_{-}$
during the stage $n$ of the Erlang process,
given that the process starts in level $x$ in phase $u \in  \mathcal{S}_{-}$.
Observe that 
$\boldsymbol{W}^{(0)}_{x} = 
\exp({\boldsymbol{U}}^{(0)}x )$
but the other submatrices have more complex expressions.
We discuss in Section \ref{ILL}
how to determine these in actual practice. 
Similarly, we define the matrix
$\hat{\boldsymbol{W}}_{x}=\exp(\hat{\boldsymbol{U}}x )$,
for $x\geq 0$.
It is also a block-triangular, block-Toeplitz matrix
and we denote by
$\boldsymbol{\hat{W}}^{(n)}_{x}$,
$n=0,\cdots,L-1$,
 the blocks in the first row.

\subsection{Probability distributions}
Recall the definition 
(\ref{DefdeR}) of
$\boldsymbol{r}(\cdot,k)$
as
 the distribution function of the level at time $\mathcal{Y}_k$.
We define the minimum
and the maximum levels reached during an interval, 
\begin{equation}\label{defminMax}
 m(t)=\min\{X(s):0 \leq s \leq t\}
 \text{\quad and \quad }
 M(t)=\max\{X(s):0 \leq s \leq t\},
\end{equation}
for $t\in[0,T]$
and we
define 
 their conditional distribution vectors 
 $\boldsymbol{\eta}(\cdot)$
and
$\boldsymbol{\mu}(\cdot)$
given $X(0)=0$:
\begin{equation}\label{defProbaMinMax}
{\eta}_i(x,k)=\mathbb{P}_{i}^{0}\left[m(\mathcal{Y}_k)\leq x\right]
\text{\quad and \quad }
{\mu}_i(x,k)=\mathbb{P}_{i}^{0}\left[M(\mathcal{Y}_k)\leq x\right]
\end{equation}
for $i \in \mathcal{S}$.
The distributions take different forms according to whether the initial phase is in $\mathcal{S}_{-}$
or in $\mathcal{S}_{+}$
and
we partition all the vectors according to the initial phase
in a manner conformant with  (\ref{GeneratorA}).

\begin{LEM}\label{LemMinMax}
The conditional distribution of $m(\mathcal{Y}_k)$,
  given the initial level and the initial phase, is as follows:
\begin{enumerate}[(a)]
\item
If  $x<0$, then
\begin{align}
\boldsymbol{\eta}_{+}(x,k)
&=\sum_{n=0}^{k-1}
{\boldsymbol{\Psi^{(n)}}
\boldsymbol{\eta}_{-}(x,k-n)},
\label{EqMinReachedP}
\\
\boldsymbol{\eta}_{-}(x,k)
&=\sum_{n=0}^{k-1}
\boldsymbol{W}^{(n)}_{\vert x \vert}
\boldsymbol{1}.
\label{EqMinReachedM}
\end{align}
\item
If $x\geq0$, then
\begin{equation}
\boldsymbol{\eta}_{+}(x,k)
= \boldsymbol{1} 
\qquad
\mbox{ and }
\qquad 
\boldsymbol{\eta}_{-}(x,k)
= \boldsymbol{1}.
\label{EqMinReachedMCase2}
\end{equation}
\end{enumerate}   
  \end{LEM}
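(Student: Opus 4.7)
The plan is to treat (b) as an immediate consequence of the definition of the minimum, and then to split (a) into the two cases \textit{initial phase in} $\mathcal{S}_+$ \textit{vs.} $\mathcal{S}_-$, each handled by one of the two building blocks already developed in the preceding section: the first-return matrix $\boldsymbol{\Psi}$ in the up-phase case, and the downward-record generator $\boldsymbol{U}$ (via $\boldsymbol{W}_x=\exp(\boldsymbol{U}x)$) in the down-phase case. The block-Toeplitz structure of $\boldsymbol{\Psi}$ and $\boldsymbol{W}_x$ will do the work of converting shifts in the Erlang stage into reindexed sums. For part (b), since $X(0)=0$ we have $m(\mathcal{Y}_k)\le 0\le x$ almost surely regardless of the initial phase, hence both vectors equal $\boldsymbol{1}$; this disposes of (\ref{EqMinReachedMCase2}) in one line.

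For (a) with $i\in\mathcal{S}_+$ and $x<0$, I would argue by sample-path continuity: every visit of the process to levels below $0$ must be preceded by a first return to level $0$, which by definition lands in some phase of $\mathcal{S}_-$. Conditioning on the Erlang stage $n$ at which this first return occurs and applying the strong Markov property at $\tau_-(0)$: the first-return piece contributes $\boldsymbol{\Psi}^{(n)}$ (by the definition of $\boldsymbol{\Psi}^{(n)}$ in (\ref{BigPSI}) and the shift-invariance in the stage index), after which exactly $k-n$ stages remain and the conditional probability of reaching level $x$ from $(0,j)$ with $j\in\mathcal{S}_-$ is $\boldsymbol{\eta}_-(x,k-n)$ by spatial homogeneity. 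Summing over $n\in\{0,\ldots,k-1\}$ gives (\ref{EqMinReachedP}).

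For (a) with $i\in\mathcal{S}_-$ and $x<0$, the event $\{m(\mathcal{Y}_k)\le x\}$ is precisely the event that the level $-|x|$ is reached before the Erlang absorbs, since the process immediately moves strictly below $0$. Here I would invoke the Erlangized downward-record process, whose generator (with respect to the downward distance travelled) is $\boldsymbol{U}$: the entry $[\boldsymbol{W}_{|x|}]_{(i,L-k)(j,n)}$ is the probability that level $-|x|$ is attained in phase $j\in\mathcal{S}_-$ during Erlang stage $n$, given a start at level $0$ in stage $L-k$. Requiring that the Erlang has not expired means $n\le L-1$, i.e.\ $n-(L-k)\in\{0,\ldots,k-1\}$; by the block-Toeplitz form (\ref{StructureW}) of $\boldsymbol{W}_{|x|}$ this index difference is all that matters, and summing over admissible $n$ and over terminal phases $j\in\mathcal{S}_-$ yields exactly $\sum_{n=0}^{k-1}\boldsymbol{W}^{(n)}_{|x|}\boldsymbol{1}$, which is (\ref{EqMinReachedM}).

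The only step that requires genuine care is the bookkeeping in this last paragraph: one must make explicit that the translation invariance in the Erlang stage, already built into $Q$ and inherited by $\boldsymbol{U}$ and hence by $\boldsymbol{W}_{|x|}$, is exactly what allows us to replace a sum indexed by $n=L-k,\ldots,L-1$ (absolute stage) by a sum indexed by $n=0,\ldots,k-1$ (stages elapsed). Once this shift identity is stated, the rest is a direct application of the strong Markov property together with the probabilistic meaning of $\boldsymbol{\Psi}^{(n)}$ and $\boldsymbol{W}^{(n)}_{|x|}$.
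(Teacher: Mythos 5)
Your proof is correct and follows essentially the same route as the paper: part (b) is immediate from $m(\mathcal{Y}_k)\le X(0)=0\le x$; for $i\in\mathcal{S}_-$ you identify $\{m(\mathcal{Y}_k)\le x\}$ with $\{\tau_-(x)\le\mathcal{Y}_k\}$ and read off the probability from the blocks of $\boldsymbol{W}_{|x|}=\exp(\boldsymbol{U}|x|)$; for $i\in\mathcal{S}_+$ you condition on the stage of first return to level $0$ via $\boldsymbol{\Psi}^{(n)}$ and invoke the strong Markov property. Your extra remarks on the stage-shift invariance and the block-Toeplitz structure make explicit what the paper leaves implicit, but the underlying decomposition is identical.
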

  
  \begin{proof}
Take $x<0$.
If $i\in \mathcal{S}_{-}$, we have
\begin{align*}
{\eta}_{i}(x,k)
  =  \mathbb{P}_{i}^{0}\left[\tau_{-}(x)\leq \mathcal{Y}_{k}\right]
  =  \sum_{n=0}^{k-1}{\color{black}\sum_{j\in\mathcal{S}_{-}}\mathbb{P}_{ij}^{0}[\phi\left(\tau_{-}(x)\right)=n]}
 = \boldsymbol{e}_{i}\sum_{n=0}^{k-1}
\boldsymbol{W}^{(n)}_{\vert x \vert}
 \boldsymbol{1},
\end{align*}
where
$\boldsymbol{e_{i}}$
is the vector with
1 in the $i$-th component and zero elsewhere.
If $i\in \mathcal{S}_{+},$
the level  increases at first
and it has to return
 to the initial level $0$ in a phase $u\in \mathcal{S}_{-}$ during
one of the $k$ Erlang stages,
this is given by
 $\boldsymbol{\Psi^{(n)}_{iu}}$
for some $n\in\{0,...,k-1\}$.
Then, starting from a phase in $\mathcal{S}_{-}$, it has to reach below $x$ during 
the remaining $k-n$ exponential steps,
which has probability
$\boldsymbol{\eta}_{-}(x,k-n)$,
which concludes the proof
of 
(\ref{EqMinReachedP})
and
(\ref{EqMinReachedM}).
The proof of 
(\ref{EqMinReachedMCase2})
is immediate.
\end{proof}

  \begin{LEM}
The conditional distribution of $M(\mathcal{Y}_k)$
given the initial level
and the initial phase
is given as follows.
\begin{enumerate}[(a)]
\item
If $x<0$, 
then
\begin{equation}\label{CondMaxNeg}
\boldsymbol{\mu}_{+}(x,k)
=   \boldsymbol{0}
\qquad
\mbox{ and }
\qquad 
\boldsymbol{\mu}_{-}(x,k)
=
  \boldsymbol{0}.
\end{equation}
\item
If  $x\geq0$, then
\begin{align}
\boldsymbol{\mu}_{+}(x,k)
&=
\boldsymbol{1}
-\sum_{n=0}^{k-1}
{\boldsymbol{\hat{W}}}^{(n)}_{x}\boldsymbol{1},
\label{CondMaxPosSPlus}
\\
\boldsymbol{\mu}_{-}(x,k)
&=
\boldsymbol{1}
-
\sum_{n=0}^{k-1}
{\boldsymbol{\hat{\Psi}^{(n)}}}
\overline{\boldsymbol{\mu}}_{+}(x,k-n).
\label{CondMaxPosSMinus}
\end{align}
\end{enumerate}
\end{LEM}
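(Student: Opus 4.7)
The plan is to treat the three sub-cases separately, all relying on the probabilistic interpretation of $\boldsymbol{\hat{U}}$ as the generator of the upward-record process of the Erlangized walk. The case $x < 0$ is trivial: since $X(0)=0$, one has $M(\mathcal{Y}_k) \geq 0 > x$ deterministically, so both components of $\boldsymbol{\mu}(x,k)$ vanish. I will dispatch this in one line.

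For $x \geq 0$ with initial phase $i \in \mathcal{S}_+$, the key identity I will use is $\{M(\mathcal{Y}_k) > x\} = \{\tau_+(x) \leq \mathcal{Y}_k\}$. I then exploit the fact that $\boldsymbol{\hat{W}}_x = \exp(\boldsymbol{\hat{U}} x)$ encodes the joint distribution of phase and Erlang stage at $\tau_+(x)$: the block $\boldsymbol{\hat{W}}^{(n)}_x$ captures the event that level $x$ is first reached during Erlang stage $n$, with entries indexed by the starting and terminal phases in $\mathcal{S}_+$. Summing over $n = 0, \ldots, k-1$ and right-multiplying by $\boldsymbol{1}$ produces $\mathbb{P}^0_i[\tau_+(x) \leq \mathcal{Y}_k]$, whose complement is exactly (\ref{CondMaxPosSPlus}).

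For $x \geq 0$ with $i \in \mathcal{S}_-$, my argument will be that the level starts by going down, so the maximum can exceed $x \geq 0$ only if the process first makes an up-crossing back to level $0$ into a phase in $\mathcal{S}_+$ before the Erlang clock expires. Conditioning on the Erlang stage $n$ and the phase $j \in \mathcal{S}_+$ at this first up-crossing, with probabilities supplied by $\boldsymbol{\hat{\Psi}}^{(n)}$, and invoking the strong Markov property together with the case already established, the tail factor for the remaining $k-n$ Erlang stages is $\overline{\mu}_j(x, k-n)$. Summing over $n$ (with the sum over $j$ absorbed into the matrix product) and taking complements will yield (\ref{CondMaxPosSMinus}).

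The main step to be careful about, I expect, is the probabilistic reading of $\boldsymbol{\hat{W}}^{(n)}_x$: one must confirm that the upward-record process of the Erlangized walk is itself Markov with generator $\boldsymbol{\hat{U}}$, and that the block-Toeplitz structure of $\exp(\boldsymbol{\hat{U}}x)$ correctly records the number of Erlang stages elapsed during the ascent to level $x$. This mirrors precisely the use of $\boldsymbol{W}^{(n)}_{|x|}$ in Lemma \ref{LemMinMax} for the downward passage, so rather than redoing the derivation I would simply appeal to the symmetric argument and to the structure (\ref{StructureW}).
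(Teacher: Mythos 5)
Your proposal is correct and takes essentially the same approach as the paper: the paper's (very terse) proof simply says to compute the complementary probability $\overline{\mu}_i(x,k)=\mathbb{P}_i^0[M(\mathcal{Y}_k)>x]$ by mirroring the argument for Lemma \ref{LemMinMax}, which is exactly what you do via the identity $\{M(\mathcal{Y}_k)>x\}=\{\tau_+(x)\leq\mathcal{Y}_k\}$, the blocks $\boldsymbol{\hat{W}}^{(n)}_x$ of $\exp(\boldsymbol{\hat{U}}x)$, and the first-up-crossing matrices $\boldsymbol{\hat{\Psi}}^{(n)}$. You merely make explicit what the paper leaves to the reader.
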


\begin{proof}
To analyze the distribution of $M(\mathcal{Y}_k)$,
we follow an argument similar to
the proof of Lemma \ref{LemMinMax}
and we determine the complementary 
 probability distribution vector with components
$\overline{\mu}_i(x,k)
=\mathbb{P}_{i}^{0}\left[M(\mathcal{Y}_k)> x\right]
$.
This directly leads to 
equations 
	(\ref{CondMaxNeg}),
(\ref{CondMaxPosSPlus}),
(\ref{CondMaxPosSMinus}).
\end{proof}

\begin{Thm}\label{ThmLevelReached}
The conditional distribution of 
$X(\mathcal{Y}_k)$
given that the initial level is $0$ and given  the  initial phase
 is as follows.
\begin{enumerate}[(a)]
\item
If $x\leq0$,  then
\begin{align}
\boldsymbol{r}_{+}(x,k) 
 & =  
\sum_{n=0}^{k-1}
 \boldsymbol{\Psi^{(n)}}
 \boldsymbol{r}_{-}(x,k-n),
  \label{LevelEq2}
  \\
\boldsymbol{r}_{-}(x,k) & = 
\sum_{n=0}^{k-1}
\boldsymbol{W}^{(n)}_{\vert x \vert}
 \boldsymbol{\hat{h}}{(k-n)}.
 \label{LevelEq1}  
\end{align}
\item
If $x>0$, then
\begin{align}
 \boldsymbol{r}_{+}(x,k)
 &= \boldsymbol{1}
 -
 \sum_{n=0}^{k-1}
\hat{\boldsymbol{W}}^{(n)}_{x}
\boldsymbol{h}{(k-n)},
\label{LevelRchdPosUp}
	\\
 \boldsymbol{r}_{-}(x,k)
&= \boldsymbol{1}-
\sum_{n=0}^{k-1}
	\boldsymbol{\hat{\Psi}}^{(n)}
\overline{\boldsymbol{r}}_{+}(x,k-n).
	\label{LevelRchdNegUp}
\end{align}
\end{enumerate} 
\end{Thm}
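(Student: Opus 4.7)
The four equations cover the four combinations of the sign of $x$ and the initial phase class. The plan is to repeat, in each case, the first-passage decomposition used in the preceding two lemmas, but now replacing the pattern ``hit the threshold $x$, then do anything'' by ``hit $x$, then end up on the correct side of $x$'', which is exactly where the vectors $\boldsymbol{h}$ and $\boldsymbol{\hat h}$ enter. I will write each argument in terms of the stage $n$ of the Erlang counter at the instant of the relevant first-passage epoch, then invoke the strong Markov property and spatial homogeneity to restart.

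\textbf{Case $x\le 0$.} If $i\in\mathcal{S}_{+}$, the level is strictly increasing at time $0$, so $\{X(\mathcal{Y}_k)\le x\}$ forces a first return to level $0$ from above before $\mathcal{Y}_k$. Conditioning on the stage $n\in\{0,\dots,k-1\}$ during which this return occurs and on the phase $j\in\mathcal{S}_{-}$ in which it occurs, the probability of this prefix is the $(i,j)$-entry of $\boldsymbol{\Psi}^{(n)}$. Restarting from level $0$ in phase $j$ with $k-n$ Erlang stages remaining and using spatial homogeneity, the probability that the final level is $\le x$ is $r_{-,j}(x,k-n)$, which gives (\ref{LevelEq2}). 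If $i\in\mathcal{S}_{-}$, the process can descend directly to $x$; the downward-records description in (\ref{StructureW}) gives, as the $(i,j)$-entry of $\boldsymbol{W}^{(n)}_{|x|}$, the probability that the first hit of level $x$ occurs in stage $n$ and phase $j\in\mathcal{S}_{-}$. Restarting from $x$ in phase $j$ with $k-n$ stages left, the probability that the subsequent level is still below $x$ equals $\hat h_j(k-n)$ by spatial homogeneity (the event $\{X(\mathcal{Y}_{k-n})=x\}$ has probability zero). Summing over $n$ yields (\ref{LevelEq1}).

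\textbf{Case $x>0$.} The argument is symmetric. For $i\in\mathcal{S}_{+}$, pass to the complement $\overline r_{+}(x,k)=\mathbb{P}^0_i[X(\mathcal{Y}_k)>x]$; using the upward-records structure of $\hat{\boldsymbol{W}}_x$, the first ascent to $x$ occurs in stage $n$ and phase $j\in\mathcal{S}_{+}$ with probability $[\hat{\boldsymbol{W}}^{(n)}_x]_{ij}$, after which the probability of ending strictly above $x$ with $k-n$ stages left is $h_j(k-n)$; subtracting from $\boldsymbol{1}$ gives (\ref{LevelRchdPosUp}). For $i\in\mathcal{S}_{-}$, again work with $\overline r_{-}$ and condition on the first return to level $0$ from below during stage $n$ in some phase of $\mathcal{S}_{+}$, which contributes $\boldsymbol{\hat\Psi}^{(n)}$; restarting from level $0$ in that phase with $k-n$ stages remaining, the conditional probability of ending above $x$ is $\overline r_{+}(x,k-n)$, which yields (\ref{LevelRchdNegUp}).

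\textbf{Main obstacle.} The only point requiring real care is the joint decomposition at the first-passage epoch: I need to know that, conditional on the event that the first-passage occurs during Erlang stage $n$ and ends in a particular phase, the ``remaining Erlang clock'' restarts fresh in stage $0$ with $k-n$ stages still to elapse. This follows from the memoryless property of the exponential holding time of the Erlang counter, combined with the strong Markov property of the joint phase-and-stage process $\Phi(t)$, but is the step where one must be precise. Once this is granted, the sums telescope to the stated formulas exactly as in the proofs of the two preceding lemmas; the role of the vectors $\boldsymbol{h}$, $\boldsymbol{\hat h}$ is simply to convert ``the process has reached $x$'' into ``the process is on the correct side of $x$ at $\mathcal{Y}_k$''.
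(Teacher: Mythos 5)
Your proof follows exactly the same approach as the paper's: conditioning on the Erlang stage and the phase at the first-passage epoch (to level $0$ for the return from a $\mathcal{S}_{+}$/$\mathcal{S}_{-}$ start, or to level $x$ via $\boldsymbol{W}^{(n)}_{|x|}$ or $\hat{\boldsymbol{W}}^{(n)}_{x}$), restarting by the strong Markov property and spatial homogeneity, and passing to complements for $x>0$. The extra remarks you add — that $\{X(\mathcal{Y}_{k-n})=x\}$ has probability zero and that the memoryless property justifies the fresh restart of the Erlang clock — are correct and fill in details the paper leaves implicit, but do not constitute a different argument.
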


\begin{proof}
Assume that $x$ is negative
and the initial phase is in $\mathcal{S}_{-}$,
then
to obtain
(\ref{LevelEq1}), we observe that
 the level
has to reach  $x$ during 
some stage $n\in\{0,...,k-1\}$,
 this happens with the probability
${\boldsymbol{W}}^{(n)}_{\vert x \vert}$.
Afterwards, the process has to be below level $x$ at the end of the remainings
 $k-n$ stages left,
 and this occurs
 with probability
 ${\hat{\boldsymbol{h}}{(k-n)}}$,
  given in Theorem \ref{ThmProbaAbov}.

If $i\in \mathcal{S}_{+},$ the level has first  to return the initial
level $0$
during some stage $n\in\{0,...,k-1\}$. 
Then, the process is in a phase of $\mathcal{S}_{-}$
and the argument is the same as before. So   (\ref{LevelEq2})
  is proved.

For $x>0$,
to find
equations
(\ref{LevelRchdPosUp})
and
(\ref{LevelRchdNegUp})
we determine
the probability of the event
$\left[X(\mathcal{Y}_k) > x\right] $
given the initial level $0$
and initial  phase $i$,
 the complement probability of
$r_i(x,k)$,
for which the proof is similar.
\end{proof}

We may use the Erlangization approach to obtain in a simple manner the joint distribution of $X(\mathcal{Y}_{k})$
and the minimum $m(\mathcal{Y}_{k})$
as well as
the joint distribution of 
$X(\mathcal{Y}_{k})$
and the maximum $M(\mathcal{Y}_{k})$.
 We use the notation
 $ \mathcal{P}^{\boldsymbol{\eta}}(x,y,k)$
for the vector with components
 \begin{equation*}
 \mathcal{P}_i^{\boldsymbol{\eta}}(x,y,k)
 =
 \mathbb{P}\left[m(\mathcal{Y}_k)\leq x,X(\mathcal{Y}_k)\leq y\right
 | X(0)=0, \Phi(0)=(i,0)],
 \end{equation*}
and
$ \mathcal{P}^{\boldsymbol{\mu}}(x,y,k)$
for the vector with components
 \begin{equation*}
 \mathcal{P}_i^{\boldsymbol{\mu}}(x,y,k)
 =
 \mathbb{P}\left[M(\mathcal{Y}_k)\leq x,X(\mathcal{Y}_k)\leq y  |
  X(0)=0, \Phi(0)=(i,0)\right],
  \end{equation*}
 where  $i\in\mathcal{S}$,
 $k\in\{1,\cdots,L\}$.

\begin{Thm}\label{ThmJointMinLevel}
The joint
probability distribution of $m(\mathcal{Y}_k)$
and 
$X(\mathcal{Y}_k)$
given the initial level and the initial phase,
is as follows.
\begin{enumerate}[(a)]
\item
If  $x<0$,
then
\begin{align}
\mathcal{P}^{\boldsymbol{\eta}}_{+}(x,y,k)
&=
\sum_{n=0}^{k-1}
\boldsymbol{\Psi^{(n)}}
\mathcal{P}^{\boldsymbol{\eta}}_{-}(x,y,k-n),
  \label{JointProbP}
  \\
  \mathcal{P}^{\boldsymbol{\eta}}_{-}(x,y,k)
&=
\sum_{n=0}^{k-1}
\boldsymbol{W}^{(n)}_{\vert x \vert}
\boldsymbol{r}_{-}(y-x,k-n).
\label{JointProbM}
\end{align}
 \item
If $x\geq0$ then
 \begin{equation}\label{JointMinLevelXPos}
\mathcal{P}^{\boldsymbol{\eta}}_{+}(x,y,k)
 =
 \boldsymbol{r}^{}_{+}(y,k)
 \qquad
\mbox{ and }
\qquad
\mathcal{P}^{\boldsymbol{\eta}}_{-}(x,y,k)
 =
 \boldsymbol{r}^{}_{-}(y,k).
\end{equation}
\end{enumerate}
\end{Thm}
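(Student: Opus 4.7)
The plan is to prove each formula by the same conditioning-on-stage arguments used in Lemma \ref{LemMinMax} and Theorem \ref{ThmLevelReached}, combined with the strong Markov property at an appropriate first-passage time and the spatial homogeneity of the random walk.

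Part (b) is immediate: since $X(0)=0$, one always has $m(\mathcal{Y}_k)\leq 0\leq x$, so the event $\{m(\mathcal{Y}_k)\leq x\}$ is automatically satisfied and the joint probability collapses to $\mathbb{P}^{0}_{i}[X(\mathcal{Y}_k)\leq y]=r_i(y,k)$, which is (\ref{JointMinLevelXPos}).

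For part (a) with initial phase $i\in\mathcal{S}_-$, the key observation is that $\{m(\mathcal{Y}_k)\leq x\}=\{\tau_-(x)\leq \mathcal{Y}_k\}$, and once the level has reached $x$ the minimum remains at most $x$ for all subsequent times, so after $\tau_-(x)$ only the constraint $X(\mathcal{Y}_k)\leq y$ is active. I would decompose on the Erlang stage $n\in\{0,\ldots,k-1\}$ during which $\tau_-(x)$ occurs and on the arrival phase $u\in\mathcal{S}_-$; the contribution to this joint event is $[\boldsymbol{W}^{(n)}_{|x|}]_{iu}$. By memorylessness the residual time $\mathcal{Y}_k-\tau_-(x)$ is distributed as $\mathcal{Y}_{k-n}$, so by the strong Markov property and spatial homogeneity the conditional probability that $X(\mathcal{Y}_k)\leq y$ equals $[\boldsymbol{r}_-(y-x,k-n)]_u$, using definition (\ref{DefdeR}) with initial level $x$ shifted to $0$. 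Summing over $n$ and $u$ produces (\ref{JointProbM}).

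For $i\in\mathcal{S}_+$, the level must first return to $0$ before it can reach $x<0$. I would condition on the stage $n$ and landing phase $v\in\mathcal{S}_-$ at this first return to the initial level, which contributes $[\boldsymbol{\Psi}^{(n)}]_{iv}$; the remaining $k-n$ stages then start afresh at level $0$ in phase $v$, and the corresponding conditional joint probability is $[\mathcal{P}^{\boldsymbol{\eta}}_-(x,y,k-n)]_v$ by the preceding paragraph. This yields (\ref{JointProbP}). The only subtlety, common to the earlier proofs of this section, is the careful application of the strong Markov property at the relevant first-passage times inside the Erlangized framework; this is routine once one observes that, the Erlang clock being a sum of i.i.d.\ exponentials, the residual clock conditionally on the stopping time falling in stage $n$ is independent of the pre-stopping history and distributed as $\mathcal{Y}_{k-n}$.
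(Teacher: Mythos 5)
Your proof is correct and follows essentially the same approach as the paper: decompose on the Erlang stage and arrival phase at the first passage to $x$ (or first return to $0$ when starting in $\mathcal{S}_+$), use spatial homogeneity to identify the residual joint probability as $\boldsymbol{r}_-(y-x,\cdot)$ or $\mathcal{P}^{\boldsymbol{\eta}}_-(x,y,\cdot)$, and observe that part (b) is trivial since $m(\mathcal{Y}_k)\leq 0$ always. The only difference is that you spell out the strong Markov/memorylessness justification slightly more explicitly than the paper, which leaves it implicit.
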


\begin{proof}
Firstly,
assume that $x<0$,
we use the same approach as in the proof of 
(\ref{EqMinReachedM}):
the process has to reach down to level $x$
with probability
$\boldsymbol{W}^{(n)}_{\vert x \vert}$
and at the end of the remaining $k-n$ stages,
takes a value less than $y-x$
and so we obtain (\ref{JointProbM}).

If $\varphi(0)\in\mathcal{S}_{+},$ 
then the level has first to come back to the initial
level $0$
during some stage $n\in\{0,...,k-1\}$,
at which time
the situation is similar to the case 
$\varphi(0)\in \mathcal{S}_{-}$
with $k-n$
Erlang stages left; this gives
(\ref{JointProbP}).

For $x\geq0$,  given that the initial level is $0$, 
$m(\mathcal{Y}_{k})\leq x$ with probability one
and
(\ref{JointMinLevelXPos})
immediately follows.
\end{proof}

\begin{Thm}
The joint probability distribution of 
$M(\mathcal{Y}_k)$
and $X(\mathcal{Y}_k)$
given the initial  level ant the initial phase, is as follows.
\begin{enumerate}[(a)]
\item \label{Case1JointMaxLv}
If $0\leq x$
and $y<x$,
\begin{align}
\mathcal{P}_{+}^{\boldsymbol{\mu}}(x,y,k)
&=
\boldsymbol{r}_{+}(y,k) 
-
\sum_{n=0}^{k-1}
\boldsymbol{\hat{W}}^{(n)}_{x}
\boldsymbol{r}_{+}(y-x,k-n),
\label{JointProbMax}
\\
\mathcal{P}_{-}^{\boldsymbol{\mu}}(x,y,k)
&=
\boldsymbol{r}_{-}(y,k) 
-
\sum_{{\underset{ m+n \leq k-1}
{0\leq m,n}}}
 \hat{\boldsymbol{\Psi}}^{(n)}
\boldsymbol{\hat{W}}^{(m)}_{x}
\boldsymbol{r}_{+}(y-x,k-n-m).
\label{JointProbMaxMinus}
\end{align}
 \item 
If $0>x$ or $y>x$, then
 \begin{equation}\label{Case2JointMaxLv}
\mathcal{P}_{+}^{\boldsymbol{\mu}}(x,y,k)
 =
\boldsymbol{\mu}_{+}(x,k)
 \qquad
 \text{and}
 \qquad
 \mathcal{P}_{-}^{\boldsymbol{\mu}}(x,y,k)
 =
\boldsymbol{\mu}_{-}(x,k).
\end{equation}
\end{enumerate}
\end{Thm}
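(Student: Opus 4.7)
The proof will follow the same pattern as Theorem \ref{ThmJointMinLevel}, but with the roles of upcrossings and downcrossings swapped, and will rely on the complementary-event decomposition used in the proof of Lemma for $\boldsymbol{\mu}$ and in (\ref{LevelRchdPosUp})--(\ref{LevelRchdNegUp}). First I will dispatch case (b): if $x<0$, then $M(\mathcal{Y}_k) \geq X(0) = 0 > x$, so both entries of $\mathcal{P}^{\boldsymbol{\mu}}$ are $\boldsymbol{0}$; and if $y \geq x$, then $\{X(\mathcal{Y}_k)\leq y\}\supseteq\{M(\mathcal{Y}_k)\leq x\}$, so the joint probability collapses to $\boldsymbol{\mu}(x,k)$. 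Equation (\ref{Case2JointMaxLv}) then follows from (\ref{CondMaxNeg}) and by inspection.

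For case (a), fix $0\leq x$ and $y<x$. The key identity is
\begin{equation*}
\mathbb{P}[M(\mathcal{Y}_k)\leq x,\,X(\mathcal{Y}_k)\leq y]
=\mathbb{P}[X(\mathcal{Y}_k)\leq y]-\mathbb{P}[M(\mathcal{Y}_k)> x,\,X(\mathcal{Y}_k)\leq y],
\end{equation*}
so that the first term on the right gives $\boldsymbol{r}_{\pm}(y,k)$ and only the second term needs a record-process argument. Starting in a phase $i\in\mathcal{S}_+$ at level $0$, the event $\{M(\mathcal{Y}_k)>x\}$ forces the process to touch level $x$ for the first time during some Erlang stage $n\in\{0,\dots,k-1\}$; this happens with block probability $\hat{\boldsymbol{W}}^{(n)}_x$, and since the current phase at that instant lies in $\mathcal{S}_+$ and $k-n$ stages remain, the strong Markov property combined with spatial homogeneity reduces the remaining problem to $X(\mathcal{Y}_{k-n})\leq y-x$ for a random walk starting at level $0$ in a phase of $\mathcal{S}_+$, whose probability is $\boldsymbol{r}_+(y-x,k-n)$. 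Summing over $n$ yields (\ref{JointProbMax}).

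For initial phase $i\in\mathcal{S}_-$, an additional stage is needed: to exceed $x>0$, the process must first make an up-crossing back to level $0$ during some stage $n$ (contributing $\hat{\boldsymbol{\Psi}}^{(n)}$), after which the argument of the previous paragraph applies with $k-n$ stages remaining, producing a further $\hat{\boldsymbol{W}}^{(m)}_x$ and a factor $\boldsymbol{r}_+(y-x,k-n-m)$. Summation over the pair $(m,n)$ with $m+n\leq k-1$ yields (\ref{JointProbMaxMinus}). The justification that $\{M(\mathcal{Y}_k)>x\}\cap\{X(\mathcal{Y}_k)\leq y\}$ really decomposes in this way uses $y<x$: once the maximum has crossed $x$, ending at a level $\leq y<x$ automatically requires the subsequent fluctuation to drop by more than $x-y$ relative to the record level, which is captured exactly by $\boldsymbol{r}_+(y-x,\cdot)$.

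The main obstacle is bookkeeping rather than conceptual difficulty: one must verify that the first-passage-through-$x$ event in the Erlangized model is genuinely captured by the blocks $\hat{\boldsymbol{W}}^{(n)}_x$ in (\ref{StructureW}), and that the Erlang stages consumed along the two separate excursions (the $\hat{\boldsymbol{\Psi}}^{(n)}$ up-crossing and the $\hat{\boldsymbol{W}}^{(m)}_x$ climb to $x$) combine additively because the Erlang clock is independent of the phase trajectory. These are exactly the properties already used in the proofs of Theorem \ref{ThmLevelReached} and Theorem \ref{ThmJointMinLevel}, so the present proof is essentially a symmetric adaptation of those arguments.
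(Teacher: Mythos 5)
Your proof is correct and follows essentially the same route as the paper: the paper's own argument consists precisely of the complementary-event decomposition $[X(\mathcal{Y}_k)\leq y, M(\mathcal{Y}_k)\leq x] = [X(\mathcal{Y}_k)\leq y]\setminus[X(\mathcal{Y}_k)\leq y, M(\mathcal{Y}_k)>x]$ followed by a first-passage-to-$x$ decomposition "similar to Theorem~\ref{ThmJointMinLevel}," which is exactly what you spell out. The only difference is one of explicitness — you write out the stage-counting and strong Markov step that the paper leaves implicit by reference.
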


\begin{proof}

To prove 
(\ref{JointProbMax})
and
(\ref{JointProbMaxMinus}), 
we write
\begin{equation*}
\left[X(\mathcal{Y}_k)\leq y, M(\mathcal{Y}_k)\leq x\right]
=
\left[X(\mathcal{Y}_k)\leq y\right]
\backslash
\left[X(\mathcal{Y}_k)\leq y, M(\mathcal{Y}_k)>x\right],
\end{equation*}
and follow an argument similar 
to
Theorem \ref{ThmJointMinLevel}.
To see (\ref{Case2JointMaxLv}) is obvious.
\end{proof}

\begin{Rem} 
{\bf Link with the bilateral phase-type distribution}

{\rm{
It must be observed that 
$X(T)$ is a particular BPH distribution
which we briefly define.
Suppose that
$\left\{ \zeta(t): t\in\mathbb{R}^{+} \right\} $ 
a Markov process is defined on the state space 
$\mathcal{E} = \{0,1,...,m\}$ 
where states $1,...,m$ are transient and
state $0$ is absorbing.
The infinitesimal generator is ${G}$ with the following structure 
\begin{equation*}
{G}=
\left[\begin{array}{c|c}
D & \boldsymbol{d}\\
 \hline 0 &  0
\end{array}\right],
\end{equation*}
where $D$ is a square matrix of order $m$, $\boldsymbol{d}$ is a column vector
of size $m$ and $\boldsymbol{d}=-D\boldsymbol{1}$. 
The distribution of time $\Delta$ until absorption, defined by 
\begin{equation*}
\Delta=\inf\left\{ t\geq0:D({t})=0\right\}, 
\end{equation*}
is the \textit{phase-type distribution} with representation  $(\boldsymbol{\gamma},D)$,
where $\boldsymbol{\gamma}$ is the initial distribution of $\left\{ \zeta(t)\right\} $
over the transient states.
Define a {\it{Markov modulated fluid model}} $\left\{ (Y(t),\zeta(t)) : t\in \mathbb{R}^{+}\right\} $
such that $Y(0)=0$ and  $Y(\cdot)$ varies linearly as follows 
\begin{equation*}
Y(t)=\int_{0}^{t}e_{\zeta(s)}\mathrm{d}s,
\end{equation*}
where $e_{i}>0$ if$ $ $i\in \mathcal{E}_{+}$ and $e_{i}<0$ if $i\in \mathcal{E}_{-}$,
and
$\mathcal{E}_{+}\cup\mathcal{E}_{-}\cup\{0\}=\mathcal{E}$.
The distribution of $Y(\Delta)$ is the \textit{bilateral phase-type} (BPH) \textit{distribution} with representation $\left(\boldsymbol{\gamma},D,E\right)$, where $E=\mbox{diag}(e_1,...,e_m)$.
It is clear that  $X(T)$ has a BPH distribution where $G$ is given by (\ref{generatorQ}).
The density function $f$ of $Y(\Delta)$ may be written as follows, where $\Psi$, $U$ and $\boldsymbol{h}$
are as in (\ref{BigPSI}), (\ref{StructureU})
and (\ref{hDefinition}), respectively
and similarly for $\hat{\Psi}$, $\hat{U}$
and $\hat{\boldsymbol{h}}$.
We suppose that $\gamma_{0}= 0$ so that the initial state is not the absorbing state $0$.

 \begin{Thm}\label{BPHdistri}
The density function $f$ of $Y(\Delta)$ is given as follows
\begin{numcases}{f(x)=}
\hat{\boldsymbol{p}}  \exp({U}|x|) ({-U}) \hat{\boldsymbol{h}},
&
$x<0$,
\label{BPHNeg}\\
\boldsymbol{p} \exp(\hat{U}x) (-\hat{U}) \boldsymbol{h},
&
$x>0$,
\label{BPHPos}
\end{numcases}
where
$\hat{\boldsymbol{p}}
=\boldsymbol{\gamma}_{+}\Psi+\boldsymbol{\gamma}_{-}$
and
$\boldsymbol{p}=\boldsymbol{\gamma}_{+}+\boldsymbol{\gamma}_{-}\hat{\Psi}$.
\end{Thm}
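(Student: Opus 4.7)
The plan is to establish the two density formulas by first computing, via a first-passage decomposition, the tail of $Y(\Delta)$ on the positive half-line and its cumulative distribution on the negative half-line, and then differentiating with respect to $x$. The two key probabilistic ingredients are the strong Markov property of $\{\zeta(t)\}$ at the first passage times $\tau_+(x)$ and $\tau_-(x)$, and the spatial homogeneity of the fluid model.

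For the case $x>0$: by continuity of the sample paths of $Y$, the event $\{Y(\Delta)>x\}$ is contained in $\{\tau_+(x)<\Delta\}$. Conditioning on the phase reached at $\tau_+(x)$,
\[
\mathbb{P}[Y(\Delta)>x]=\sum_{i\in\mathcal{E}_+}\mathbb{P}[\tau_+(x)<\Delta,\,\zeta(\tau_+(x))=i]\,h_i,
\]
where the strong Markov property applied at $\tau_+(x)$ combined with spatial homogeneity shows that the conditional probability of $Y(\Delta)>x$ given the conditioning event equals $h_i$. The first factor is the $i$-th entry of $\boldsymbol{p}\exp(\hat{U}x)$: the sub-generator $\hat{U}$ governs the process of upward records killed upon absorption, and $\boldsymbol{p}=\boldsymbol{\gamma}_++\boldsymbol{\gamma}_-\hat{\Psi}$ is the defective initial distribution of this record process at level $0$ (an initial $\mathcal{E}_-$ phase must first return to $0$ in an $\mathcal{E}_+$ phase via $\hat{\Psi}$). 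Hence $\mathbb{P}[Y(\Delta)>x]=\boldsymbol{p}\exp(\hat{U}x)\,\boldsymbol{h}$, and differentiation in $x$ yields (\ref{BPHPos}).

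A symmetric argument for $x<0$, applied at $\tau_-(x)$ with the downward record process (sub-generator $U$ and initial distribution $\hat{\boldsymbol{p}}=\boldsymbol{\gamma}_+\Psi+\boldsymbol{\gamma}_-$), gives $\mathbb{P}[Y(\Delta)<x]=\hat{\boldsymbol{p}}\exp(U|x|)\,\hat{\boldsymbol{h}}$. Since $|x|=-x$ for $x<0$, differentiating in $x$ produces an extra sign and yields $f(x)=\hat{\boldsymbol{p}}\exp(U|x|)(-U)\hat{\boldsymbol{h}}$, which is (\ref{BPHNeg}).

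The step that requires the most care is the identification of $\boldsymbol{p}\exp(\hat{U}x)$ with the defective joint distribution of $(\tau_+(x),\zeta(\tau_+(x)))$ on $\{\tau_+(x)<\Delta\}$, equivalently the verification that the process of upward records, observed only up to absorption of $\zeta$, is a Markov chain with sub-generator $\hat{U}$ in the present BPH setting. Once this representation is in hand --- by the standard censoring argument as in \cite{dasilva2002similarity}, with the straightforward modification that absorption during off-record excursions is now an extra killing event --- the remainder of the proof reduces to a direct application of the strong Markov property together with spatial homogeneity.
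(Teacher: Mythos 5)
Your proof is correct and follows essentially the same route as the paper's: compute the one-sided distribution function of $Y(\Delta)$ by decomposing at the first passage to level $x$ using the record-process generator and the tail probabilities $\boldsymbol{h}$, $\hat{\boldsymbol{h}}$, then differentiate. The only cosmetic difference is that you start from the tail $\mathbb{P}[Y(\Delta)>x]$ for $x>0$, whereas the paper starts from $\mathbb{P}[Y(\Delta)\leq x]$ for $x<0$ and invokes the symmetric argument for the other side.
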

\begin{proof}
Take $x<0$.
By a similar argument as in Theorem \ref{ThmLevelReached}, we write
$\mathbb{P}[Y(\Delta)\leq x]=\boldsymbol{\gamma}_{+}\Psi \exp({U}|x|) \hat{\boldsymbol{h}}$,
 if the initial state is in $\mathcal{E}_{+}$
 and 
$\mathbb{P}[Y(\Delta)\leq x]=\boldsymbol{\gamma}_{-}
 \exp({U}|x|) \hat{\boldsymbol{h}}$,
if the initial state is in $\mathcal{E}_{-}$. Thus,
\begin{align*}
f(x)
&=
\frac{d}{dx} \left((\boldsymbol{\gamma}_{+}\Psi+\boldsymbol{\gamma}_{-}) \exp({U}|x|) \hat{\boldsymbol{h}}\right)
=
\hat{\boldsymbol{p}}  \exp({U}|x|) ({-U}) \hat{\boldsymbol{h}},
\end{align*}
and this shows (\ref{BPHNeg}).
A similar argument holds for the proof of  (\ref{BPHPos}).
\end{proof}

This expression is equivalent to the one given in 
 Theorem 4.1 in Ahn and Ramaswami \cite{ahn2005bilateral}. To see this, we take the special case $L=1$ in order to simplify the presentation.
 Note that $\boldsymbol{\Psi}^{(0)}$ defined in (\ref{PsiZeroEq}) and $\Psi$ defined here are now identical.
 For $x>0$, 
 \begin{align}
\mathbb{P}[Y(\Delta)>x]
&=  {\boldsymbol{p}}\exp(\hat{U}x)(I-\Psi\hat{\Psi})^{-1}(\boldsymbol{1}-{\Psi} \boldsymbol{1}),
\label{BPHCompar1}\\
\intertext{by
(\ref{LevelRchdPosUp}),
(\ref{LevelRchdNegUp})
and (\ref{eq:h1}).
The calculations in Govorun {\it{et al.}}
 \cite{govorun2013stability},
 page 83 indicate that 
  $(I-\Psi\hat{\Psi}) \hat{U}
  = K (I-\Psi\hat{\Psi})$
 and here the matrix  $(I-\Psi\hat{\Psi})$ is non-singular so that 
 $$\exp{(\hat{U}x)}(I-\Psi\hat{\Psi})^{-1} = (I-\Psi\hat{\Psi})^{-1}\exp({Kx})$$ and
  we may rewrite
  (\ref{BPHCompar1}) as
}
\mathbb{P}[Y(\Delta)>x] &=  \hat{\boldsymbol{p}}(I-\hat{\Psi}\Psi)^{-1}\exp(Kx)(\boldsymbol{1}-\hat{\Psi} \boldsymbol{1}).
\nonumber
\end{align}
By post-multiplying the equation   (\ref{PsiZeroEq})
for $\boldsymbol{\Psi}^{(0)}$ 
 by $\boldsymbol{1}$,
\begin{align*}
E_{+}^{-1}D_{++}\Psi\boldsymbol{1}
+\Psi\left\vert E_{-}\right\vert ^{-1}D_{-+}\Psi\boldsymbol{1}
&=
-(
E_{+}^{-1}D_{+-}\boldsymbol{1}
+\Psi\left\vert E_{-}\right\vert ^{-1}D_{--}\boldsymbol{1}
).
\end{align*}
and  since $\boldsymbol{d}=-D\boldsymbol{1}$,
after algebraic manipulation, we find
$$(\boldsymbol{1}-\Psi \boldsymbol{1})
=(-K)^{-1}[E_{+}^{-1}\boldsymbol{d}_{+}+\Psi E_{-}^{-1}\boldsymbol{d}_{-}],$$
where $K=E_{+}^{-1}D_{++} + \Psi \left\vert E_{-}\right\vert ^{-1} D_{-+}$.
Thus, we obtain
\begin{align}
\mathbb{P}[Y(\Delta)>x]
&=  {\boldsymbol{p}}(I-\hat{\Psi}\Psi)^{-1}\exp(Kx)(-K)^{-1}[E_{+}^{-1}\boldsymbol{d}_{+}+\Psi E_{-}^{-1}\boldsymbol{d}_{-}],
\nonumber
\end{align}
and 
\begin{align}\label{BPHdensityRamPlus}
f(x)={\boldsymbol{p}}(I-\hat{\Psi}\Psi)^{-1}\exp(Kx)[E_{+}^{-1}\boldsymbol{d}_{+}+\Psi E_{-}^{-1}\boldsymbol{d}_{-}],
\quad \quad \text{if } x>0.
\end{align}
With a similar argument, one shows  that
\begin{align}\label{BPHdensityRamMin}
f(x)={\hat{\boldsymbol{p}}}(I-\Psi \hat{\Psi})^{-1}\exp(\hat{K}|x|)[\hat{\Psi}E_{+}^{-1}\boldsymbol{d}_{+}+ E_{-}^{-1}\boldsymbol{d}_{-}],
\quad \quad \text{if } x<0.
\end{align}
where $\hat{K}=|E_{-}|^{-1}D_{--}
+E_{+}^{-1}\hat{\Psi}D_{+-}$.
Equations
(\ref{BPHdensityRamPlus})
and
(\ref{BPHdensityRamMin})
 correspond to the density function given in Theorem 4.1 in Ahn and Ramaswami \cite{ahn2005bilateral}.
}}
\end{Rem}


\section{Time-dependent distribution of fluid queues}
\label{TDDFQ}
\subsection{Boundary effect}
Here, we assume that there is a boundary at the level 0, and
we consider the fluid queue $\{(Z(t),\Phi(t)):t\in\mathbb{R}^{+}\}$
where $Z(t)$ is given by  $(\ref{ZEvolution})$.
To begin with,
we analyze the length of time intervals of time during which the fluid queue remains in level $0$, once it gets there. Specifically,
we determine the
probability that the fluid queue leaves  level $0$
during the Erlang stage $m$,
in a phase  $j\in \mathcal{S}_{+}$, 
given that the process starts in level $0$
in  phase $i\in \mathcal{S}_{-}$, 
that probability
is denoted by 
$\Upsilon^{(m)}_{ij}$,
i.e.
\begin{equation}
\Upsilon^{(m)}_{ij}
	=
	\mathbb{P}_{ij}^{0}
	[ \phi(\beta)=m
	],
	\label{LeavZeroMatrix}
\end{equation}
where $\beta = \inf\{t>0:\varphi(t)\in \mathcal{S}_{+}\}$.
\begin{Thm}
The transition probability matrix of the stages
upon leaving  level $0$,
with components defined in  (\ref{LeavZeroMatrix}),
is given by
\begin{equation}
\Upsilon^{(m)}=\nu^{m}\left(\nu I-A_{--}\right)^{-(m+1)}A_{-+}.\label{eq:probaquitelevel0}
\end{equation}
\end{Thm}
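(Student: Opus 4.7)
The plan is to realize $\beta$ as a phase-type absorption time and then average against the stage distribution of the independent Erlang clock.

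First I would observe that, because the fluid is held at $0$ as long as the phase lies in $\mathcal{S}_-$, the sojourn at level $0$ lasts exactly as long as $\varphi$ remains in $\mathcal{S}_-$. On this sojourn $\varphi$ evolves as a (sub-)Markov chain on $\mathcal{S}_-$ with generator $A_{--}$ and exits to $\mathcal{S}_+$ at the instantaneous rates $A_{-+}$; hence the joint density of $(\beta,\varphi(\beta))$ under $\mathbb{P}^{0}_{i}$ is the $(i,j)$-entry of $e^{A_{--}t}A_{-+}$. In particular, $A_{--}$ is a sub-generator with eigenvalues having non-positive real parts, so $\nu I - A_{--}$ is nonsingular for every $\nu>0$.

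Next, since the Erlang stage process $\{\phi(t)\}$ is built independently of $\{\varphi(t)\}$ in Section \ref{SECERL} and, before absorption, behaves as a Poisson counting process of rate $\nu$, one has $\mathbb{P}[\phi(t)=m]=(\nu t)^m e^{-\nu t}/m!$ for $0\le m\le L-1$. Conditioning on the value of $\beta$ and using independence gives
\begin{equation*}
\Upsilon^{(m)}_{ij}
=\int_{0}^{\infty}\mathbb{P}[\phi(t)=m]\,\bigl(e^{A_{--}t}A_{-+}\bigr)_{ij}\,\mathrm{d}t,
\end{equation*}
which in matrix form becomes
\begin{equation*}
\Upsilon^{(m)}
=\frac{\nu^{m}}{m!}\left(\int_{0}^{\infty}t^{m}\,e^{(A_{--}-\nu I)t}\,\mathrm{d}t\right)A_{-+}.
\end{equation*}

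Finally I would invoke the standard matrix identity $\int_{0}^{\infty}t^{m}e^{Mt}\,\mathrm{d}t = m!(-M)^{-(m+1)}$, valid whenever $M$ is stable, applied to $M=A_{--}-\nu I$ (which is stable by the eigenvalue remark above). This immediately yields $\Upsilon^{(m)}=\nu^{m}(\nu I-A_{--})^{-(m+1)}A_{-+}$, as claimed. The main obstacle is essentially bookkeeping: one should confirm carefully that $\{\phi(t)\}$ is independent of $\{\varphi(t)\}$ on the sojourn in $\mathcal{S}_-$ and that its marginal at time $t$ is Poisson (since absorption of $\phi$ at stage $L$ plays no role in the formula for $m\le L-1$); once this is checked, the remainder is the routine matrix-integral evaluation. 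As a sanity check, $\sum_{m\ge 0}\Upsilon^{(m)}\mathbf{1}$ telescopes to $(-A_{--})^{-1}A_{-+}\mathbf{1}=\mathbf{1}$, recovering the well-known fact that the unconditional exit distribution from $\mathcal{S}_-$ is stochastic.
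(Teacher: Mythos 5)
Your argument is correct and is essentially the same as the paper's: both condition on the exit time $\beta$ from $\mathcal{S}_-$ with density $e^{A_{--}t}A_{-+}$, weight by the Poisson probability $\mathbb{P}[\phi(t)=m]=(\nu t)^m e^{-\nu t}/m!$ (which the paper writes as $\mathbb{P}[\mathcal{Y}_m<t<\mathcal{Y}_{m+1}]$), and evaluate the resulting gamma-type matrix integral. The only additions you make are the explicit invertibility and independence justifications and the closing stochasticity sanity check, which the paper leaves implicit.
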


\begin{proof}
We obtain as follows the probability that,
starting from level $0$ and a phase in
$\mathcal{S}_{-}$,
$Z(t)$ remains in level $0$ for $m$
exponential periods, 
and then leaves level $0$
in a phase $j$ of $\mathcal{S}_{+}$
before the start of the  $m+1$-st stage,
\begin{align*}
\Upsilon^{(m)}
	&=
	 \int_{0}^{\infty}
  e^{A_{--}u}
  A_{-+}
   \mathbb{P}[\mathcal{Y}_m
   					< u
					<
					\mathcal{Y}_{m+1}]
    \mathrm{d} u,
   \\
   \intertext{where $\mathcal{Y}_m$ is defined in (\ref{SumKrv}),}
	&=  
	  \int_{0}^{\infty}
  e^{A_{--}u}   A_{-+}
\frac{  (\nu u)^{m}  }{m!}e^{-\nu u}
    \mathrm{d}u,
   \\
&=\nu^{m}  
\left(\nu I-A_{--}\right)^{-(m+1)}
A_{-+}.
\end{align*}
\end{proof}

\subsection{Distribution at time $T$}
In order to determine the distribution
 of the level at time $T$,
we need the distribution under  taboo of level $0$.
This way of analysis is standard in the matrix analytic literature for fluid models.
For instance, see
Ramaswami
\cite{ram99}.

The probability that the fluid queue
is below level $x>0$ 
at time $\mathcal{Y}_k$,
 under  taboo of level $0$,
 given that the process starts in level 
 	$a\in\mathbb{R}^{+}$,
	 in phase $i$,
	 at time $0$,
   is denoted by
\begin{equation*}
{{g}_{i}(a,x,k)}
	=
	\mathbb{P}^{a}[
			Z(\mathcal{Y}_k)<x,
			Z(t)>0,
			\forall t\in (0,\mathcal{Y}_k]
			|\Phi(0)=(i,0)].
\end{equation*}
Here we need to keep track of the initial level
	$Z(0)=a$
 because of the barrier in level $0$:
$\boldsymbol{g}(a,x,k)$
is not equal to $\boldsymbol{g}(0,x-a,k)$ in general.

\begin{LEM}\label{LevelDiff0}
The probability that the fluid queue
is below level $x>0$ 
at time $\mathcal{Y}_k$,
, without returning to level
$0$,
given 
	the initial  level $a\geq0$
	and
	the initial phase,
  is
\begin{align}
	 \boldsymbol{g}_{+}(a,x,k) 
 &=
 \boldsymbol{r}_{+}( x-a ,k)
 -
 \sum_{{\underset{ m+n \leq k-1}
{0\leq m,n}}}
  	\boldsymbol{\Psi}^{(m)}
 	\boldsymbol{W}^{(n)}_{a}
	 \boldsymbol{r}_{-}(x,k-m-n),
	 \\
	 \boldsymbol{g}_{-}(a,x,k) 
 &=
 \boldsymbol{r}_{-}( x-a ,k)
 -
 \sum_{n=0}^{k-1}
 	\boldsymbol{W}^{(n)}_{a}
	 \boldsymbol{r}_{-}(x,k-n) ,
\end{align}
where
$ \boldsymbol{r}(\cdot,\cdot)$
is given
in Theorem \ref{ThmLevelReached}.
\end{LEM}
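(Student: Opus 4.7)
The plan is to exploit the fact that on the event $\{Z(t)>0\ \forall t\in(0,\mathcal{Y}_k]\}$ the fluid queue $Z$ and the unrestricted random walk $X$ with $X(0)=a$ coincide on $[0,\mathcal{Y}_k]$, because the boundary at $0$ has not yet been activated. That event is exactly the complement, for the random walk, of $\{\tau_{-}(0)\leq \mathcal{Y}_k\}$, so I can write, as a vector identity indexed by the initial phase,
\[
\boldsymbol{g}(a,x,k) \;=\; \boldsymbol{r}(x-a,k) \;-\; \mathbb{P}^{a}\!\left[X(\mathcal{Y}_k)<x,\ \tau_{-}(0)\leq \mathcal{Y}_k\right],
\]
and treat the two terms separately. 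The first term is $\boldsymbol{r}(x-a,k)$ by spatial homogeneity of the random walk (definition (\ref{DefdeR})), which already supplies the minuends $\boldsymbol{r}_{+}(x-a,k)$ and $\boldsymbol{r}_{-}(x-a,k)$ appearing in the two formulas.

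For the second term I would condition on the Erlang stage $n=\phi(\tau_{-}(0))$ during which level $0$ is first reached and on the phase at that instant, which necessarily lies in $\mathcal{S}_{-}$ since $X$ is decreasing at $\tau_{-}(0)$. If $\varphi(0)\in\mathcal{S}_{-}$, the block $\boldsymbol{W}^{(n)}_{a}$ gives the joint probability of reaching $0$ during stage $n$ in a given phase of $\mathcal{S}_{-}$; by the strong Markov property combined with the memorylessness of the exponential stages, the residual clock $\mathcal{Y}_k-\tau_{-}(0)$ is distributed as a sum of $k-n$ i.i.d.\ exponentials with parameter $\nu$, so from that point onwards the probability that $X(\mathcal{Y}_k)\leq x$ is $\boldsymbol{r}_{-}(x,k-n)$. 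Summing over $n=0,\dots,k-1$ yields the claimed expression for $\boldsymbol{g}_{-}(a,x,k)$. If instead $\varphi(0)\in\mathcal{S}_{+}$, the random walk cannot cross level $0$ before first returning to level $a$ in a phase of $\mathcal{S}_{-}$; this return occurs during some stage $m$ with probability block $\boldsymbol{\Psi}^{(m)}$, after which the preceding argument applies to the remaining $k-m$ stages and delivers the nested sum $\sum_{m+n\leq k-1}\boldsymbol{\Psi}^{(m)}\boldsymbol{W}^{(n)}_{a}\boldsymbol{r}_{-}(x,k-m-n)$.

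The delicate point I anticipate is the careful verification that the strong Markov property applies at each of the intermediate stopping times, namely $\tau_{-}(0)$ in the $\mathcal{S}_{-}$ case and the successive stopping times (first return to $a$, first descent to $0$) in the $\mathcal{S}_{+}$ case, and that the joint phase-and-stage process is genuinely Markov at those instants so that, by memorylessness, the residual Erlang clock is again a sum of $\nu$-exponentials. Once this bookkeeping is established, the manipulations mirror those already used in Theorem \ref{ThmLevelReached} and Lemma \ref{LemMinMax}, and the two formulas follow directly by regrouping the summation indices.
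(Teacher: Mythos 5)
Your proof is correct and takes essentially the same approach as the paper: the same event decomposition $\left[Z(\mathcal{Y}_k)<x,\ Z(t)>0\ \forall t\in(0,\mathcal{Y}_k]\right]=\left[X(\mathcal{Y}_k)<x\right]\setminus\left[X(\mathcal{Y}_k)<x,\ \tau_{-}(0)<\mathcal{Y}_k\right]$, followed by conditioning on the Erlang stage at which level $0$ is first reached via $\boldsymbol{W}^{(n)}_{a}$, with the extra $\boldsymbol{\Psi}^{(m)}$ step for initial phases in $\mathcal{S}_{+}$. The paper's proof simply states the decomposition and refers to the argument of Theorem \ref{ThmJointMinLevel}; you have spelled out exactly those steps.
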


\begin{proof}
To prove this, 
we use the event decomposition
\begin{equation*}
\left[
Z(\mathcal{Y}_k) < x ,
 Z(t)>0,\forall t\in (0,\mathcal{Y}_k]
 \right]
=
	\left[
		X(\mathcal{Y}_k) < x
\right]
\
\backslash
\
\left[
X(\mathcal{Y}_k) < x,
\tau_{-}(0)< \mathcal{Y}_k
\right],
\end{equation*}
and follow an argument similar 
to the  proof in
Theorem \ref{ThmJointMinLevel}.
\end{proof}

The vector $\boldsymbol{q}(a,x,k)$
of conditional distribution at time   $\mathcal{Y}_k$
is defined like $\boldsymbol{g}$,
with the difference that there is no constraint on 
$Z(t)$ during the interval
$[0,\mathcal{Y}_k ]$:
\begin{equation}\label{defdeqvect}
q_{i}(a,x,k)
=
	 \mathbb{P}_{i}^{a}
	 \left[Z({\mathcal{Y}_k})\leq x
	 \right].
\end{equation}
We give recursive equations
for the computation of this vector.

\begin{Thm}\label{DistribTimeTFF}  
\begin{enumerate}[(a)]
\item[]
\item
For $k\geq1$ and $x>0$, one has
\begin{align}
\boldsymbol{q}_{+}(0,x,k)
  &= 
  \left(I-\boldsymbol{\Psi^{(0)}}\Upsilon^{(0)}\right)^{-1}
\Big( \boldsymbol{g}_{+}(0,x,k) \nonumber
\\
&\quad +
  \sum_{{\underset{1 \leq m+n \leq k-1}{0\leq m,n}}}
\boldsymbol{\Psi}^{(n)}
\Upsilon^{(m)}
\boldsymbol{q}_{+}(0,x,k-n-m)
  \Big),
  \label{qPlus}
  \\
 \boldsymbol{q}_{-}(0,x,k) 
 &=
 \sum_{n=0}^{k-1}
 \Upsilon^{(n)}
 \boldsymbol{q}_{+}(0,x,k-n),
  \label{qMinus}
\end{align}
where
$\boldsymbol{g}_{+}(0,x,\cdot)$
is given in Lemma \ref{LevelDiff0}.

\item 
For $k\geq1$, for  $a,x>0$,
one has
\begin{align}
\boldsymbol{q}_{+}(a,x,k) 
 &=
 \boldsymbol{g}_{+}(a,x,k)
 +
 \sum_{{\underset{ m+n \leq k-1}
{0\leq m,n}}}
  \boldsymbol{\Psi}^{(n)}
 \boldsymbol{W}^{(m)}_{a}
 \boldsymbol{q}_{-}(0,x,k-n-m),
 \\
 \boldsymbol{q}_{-}(a,x,k) 
 &=
 \boldsymbol{g}_{-}(a,x,k)
 +
 \sum_{n=0}^{k-1}
 \boldsymbol{W}^{(n)}_{a}
 \boldsymbol{q}_{-}(0,x,k-n).
\end{align}
\end{enumerate}
\end{Thm}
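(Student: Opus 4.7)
The plan is to decompose paths of $(Z,\Phi)$ according to the first time $Z$ reaches or returns to the boundary level $0$, and then restart the process at that point via the strong Markov property. The restart is compatible with the remaining Erlang clock because of memorylessness: if a first-passage event occurs during stage $n$, the residual time in stage $n$ is again exponential with rate $\nu$, so conditional on the stage index at the passage the remainder of $\Phi$ is a fresh Erlang with $k-n$ stages. Throughout I will use the building blocks $\boldsymbol{\Psi}^{(n)}$, $\boldsymbol{W}^{(n)}_a$, $\Upsilon^{(n)}$ together with the no-boundary-contact probabilities $\boldsymbol{g}$ from Lemma \ref{LevelDiff0}.

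For (\ref{qPlus}), starting from $(0, \mathcal{S}_+)$ the level moves strictly above zero initially. Either $Z$ stays above $0$ on $[0, \mathcal{Y}_k]$, contributing $\boldsymbol{g}_+(0,x,k)$, or it first returns to $0$ during some stage $n$ in a phase of $\mathcal{S}_-$ (factor $\boldsymbol{\Psi}^{(n)}$), then sits at $0$ until the underlying phase enters $\mathcal{S}_+$ during stage $n+m$ of the original clock (factor $\Upsilon^{(m)}$, legitimately applied from ``stage $0$'' thanks to the memoryless reset), and from that moment evolves as a fresh copy at $(0, \mathcal{S}_+)$ with $k-n-m$ stages left (factor $\boldsymbol{q}_+(0,x,k-n-m)$). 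Summing over $(n,m)$ with $n+m \leq k-1$ and isolating the $(0,0)$ term on the left yields (\ref{qPlus}); invertibility of $I - \boldsymbol{\Psi}^{(0)}\Upsilon^{(0)}$ follows from sub-stochasticity. Equation (\ref{qMinus}) is the degenerate version: from $(0, \mathcal{S}_-)$ the queue is pinned at $0$ until it escapes upward during some stage $n \leq k-1$ (factor $\Upsilon^{(n)}$) and the continuation is $\boldsymbol{q}_+(0,x,k-n)$.

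For part (b) with $a > 0$, I would apply the same scheme with an extra layer handling the descent from $a$ to $0$. From $(a, \mathcal{S}_-)$, either the queue avoids $0$ (giving $\boldsymbol{g}_-(a,x,k)$) or it first hits $0$ during stage $n$ (factor $\boldsymbol{W}^{(n)}_a$) and then restarts at $(0, \mathcal{S}_-)$ with $k-n$ stages remaining. From $(a, \mathcal{S}_+)$, either the queue avoids $0$ (giving $\boldsymbol{g}_+(a,x,k)$), or it first returns to $a$ during stage $n$ in a phase of $\mathcal{S}_-$ (factor $\boldsymbol{\Psi}^{(n)}$), then first descends from $a$ to $0$ during stage $m$ counted from that moment (factor $\boldsymbol{W}^{(m)}_a$), and then restarts at $(0, \mathcal{S}_-)$ with $k-n-m$ stages remaining.

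The main obstacle will be a clean justification of each ``restart'' step: one must combine the strong Markov property of the joint process $(Z,\Phi)$, the spatial homogeneity of the driving random walk $X$ (which governs $Z$ away from the boundary), and the memoryless reset of the current Erlang stage, in order to obtain the needed conditional independence between the pre- and post-passage segments. Once this is written out explicitly, each of the four identities reduces to enumerating the mutually exclusive scenarios above and matching the resulting probabilities with the stated formulas.
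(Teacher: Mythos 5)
Your decomposition by first passage to the boundary level $0$, with the Erlang clock reset justified by memorylessness, is exactly the argument the paper uses; you match the paper's scenarios (stay above $0$ giving $\boldsymbol{g}$, return to $0$ via $\boldsymbol{\Psi}^{(n)}$, sit at $0$ via $\Upsilon^{(m)}$, descend from $a$ via $\boldsymbol{W}^{(m)}_a$, restart) and extract the $(0,0)$ term to obtain the resolvent form with $(I-\boldsymbol{\Psi}^{(0)}\Upsilon^{(0)})^{-1}$. The only difference is that you make the strong-Markov/memoryless justification explicit, which the paper leaves implicit.
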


\begin{proof}
There are two
ways to be in $[0,x]$ at time $\mathcal{Y}_k$,
 starting from $0$ in a phase of $\mathcal{S}_{+}$: 
either  this occurs 
without returning  to the level $0$ during the period 
$(0,\mathcal{Y}_k]$,
or
 the process first comes back to $0$,
 stays there for some amount of time,
leaves level $0$
and is in $[0,x]$
at the end of the  remaining number of stages.
This gives the decomposition
\begin{align}
\boldsymbol{q}_{+}(0,x,k)= \boldsymbol{g}_{+}(0,x,k)+\sum_{n=0}^{k-1}
\sum_{m=0}^{k-n-1}
\boldsymbol{\Psi}^{(n)}
\Upsilon^{(m)}
\boldsymbol{q}_{+}(0,x,k-n-m).
\end{align}
which leads to the equation (\ref{qPlus}).

Equation (\ref{qMinus}) is obtained by noting that
the process first remain in the level $0$ for $n$ Erlang stages and it leaves  level $0$
in a phase of $\mathcal{S}_{+}$.

We deduce easily the general case for the distribution
$\boldsymbol{q}(a,x,k)$, for 
$a>0$:
we decompose this into the probability that $X(\mathcal{Y}_k)\leq x$ without returning to $0$ and the probability that the process returns to $0$ before $\mathcal{Y}_k$. 
\end{proof}

Note that when $k=1$, the equations simplify. In particular, for instance, the second term in the big bracket in (\ref{qPlus}) disappear as the sum is  empty.

\subsection{Distributions of the minimum and the maximum}

Define the minimum
and the maximum level reached during an interval
for the model bounded at $0$: 
\begin{equation}\label{defminMaxFluid}
 m_0(t)=\min\{Z(v):0\leq v \leq t\}
 \text{\quad and \quad }
 M_0(t)=\max\{Z(v):0\leq v \leq t\}.
\end{equation}

We denote by 
$\boldsymbol{\rho}(a,x,k)$
and by
$\boldsymbol{\delta}(a,x,k)$
the vectors of the  conditional distribution function of the minimum
and the maximum reached during the Erlang period:
\begin{equation}\label{defProbaMinMaxFluid}
{\rho}_i(a,x,k)=\mathbb{P}_{i}^{a}\left[m_0(\mathcal{Y}_k)\leq x\right]
\text{\quad and \quad }
{\delta}_i(a,x,k)=\mathbb{P}_{i}^{a}\left[M_0(\mathcal{Y}_k)\leq x\right]
\end{equation}
for $i \in \mathcal{S}$.
\begin{LEM}
The conditional distribution of the minimum level reached
by the fluid queue  during
the Erlang horizon period, given
the initial level $a$
and the initial phase, is
\begin{align}
\boldsymbol{\rho}_+(a,x,k)
=\boldsymbol{\eta}_+(a-x,k)
\qquad
\text{and}
\qquad
\boldsymbol{\rho}_-(a,x,k)
=\boldsymbol{\eta}_-(a-x,k),
\end{align}
for $a,x\geq0$ and
where $\boldsymbol{\eta}(\cdot,\cdot)$
is given in the Lemma
\ref{LemMinMax}.
\end{LEM}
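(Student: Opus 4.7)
The plan is to reduce the minimum of the reflected queue $Z$ to the already-known distribution of the minimum of the unrestricted random walk $X$ (Lemma \ref{LemMinMax}), via a pathwise coupling and the spatial homogeneity of $(X,\varphi)$. The key observation is that because $x\geq 0$ lies at or above the reflecting barrier, the event that the queue dips to level $x$ coincides with the event that the underlying walk dips to level $x$.

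Concretely, I would first record the pathwise identity, valid when $Z(0)=X(0)=a\geq 0$,
\[
\inf_{0\leq s\leq t} Z(s) \;=\; \max\!\Bigl(0,\,\inf_{0\leq s\leq t} X(s)\Bigr).
\]
This follows from the representation (\ref{ZEvolution}) after a shift: before the first time $X$ crosses $0$ the two processes coincide, and once $Z$ has been pinned at $0$ it cannot fall any lower, so the infimum of $Z$ is the positive part of the infimum of $X$. Since $x\geq 0$, this identity gives the event equality
\[
\{m_0(\mathcal{Y}_k)\leq x\} \;=\; \bigl\{\inf_{0\leq s\leq \mathcal{Y}_k} X(s)\leq x\bigr\}.
\]

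Next I would apply spatial homogeneity of $(X,\varphi)$: conditionally on $X(0)=a$ and $\varphi(0)=i$, the shifted process $X(\cdot)-a$ has the law of the walk started at $0$ in phase $i$, and the Erlang clock $\mathcal{Y}_k$ is independent of the walk. Hence
\[
\mathbb{P}_i^{a}\bigl[\inf_{s\leq\mathcal{Y}_k} X(s)\leq x\bigr]
\;=\;\mathbb{P}_i^{0}\bigl[m(\mathcal{Y}_k)\leq x-a\bigr]
\;=\;\eta_i(x-a,k),
\]
which, after partitioning the initial phase into $\mathcal{S}_+$ and $\mathcal{S}_-$, delivers the two vector equations of the statement. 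Note that when $x\geq a$ the argument $x-a$ is nonnegative and both sides equal $\boldsymbol{1}$, consistent with case (b) of Lemma \ref{LemMinMax} and with the fact that $m_0$ is automatically bounded above by $a$.

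The only step requiring a moment's thought is the pathwise identity $\inf Z = \max(0,\inf X)$, since one must verify that the reflection at $0$ does not create any new way for the queue to sit below the walk. A short case split on whether $X$ has already returned to $0$ by time $s$ settles this, after which the remainder of the argument is a routine unwinding of the definitions of $\boldsymbol{\rho}$ and $\boldsymbol{\eta}$.
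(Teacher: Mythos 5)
Your argument is correct and, at its core, is the same as the paper's: for $x\ge 0$ the event that the reflected queue dips to or below $x$ coincides with the event that the free walk does, after which spatial homogeneity reduces $\boldsymbol{\rho}$ to $\boldsymbol{\eta}$. The paper merely declares this event identity ``clear''; you justify it via the Skorokhod-type relation $\inf_{s\le t}Z(s)=\max\bigl(0,\inf_{s\le t}X(s)\bigr)$, which is a genuine improvement in rigor. One issue, though: your computation (correctly) yields $\eta_i(x-a,k)$, not the $\eta_i(a-x,k)$ that appears in the printed lemma, yet you assert that your expression ``delivers the two vector equations of the statement.'' The two are not the same: for $a>x\ge 0$ one has $\rho_i(a,x,k)<1$, while $\eta_i(a-x,k)=1$ because $a-x\ge 0$, so the printed sign cannot be correct. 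Your formula is the right one and the lemma as stated contains a typographical sign error; you should flag that discrepancy explicitly rather than claim agreement with the statement as written.
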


\begin{proof}
The proof  is immediate as it is clear that for $x\geq0$,
the event 
$[m(t)>x]$
and $[m_0(t)>x]$
are identical.
\end{proof}

The analysis of $M_{0}(\mathcal{Y}_k)$
is more involved, due to the barrier at zero,
and we need a new set of first passage 
 first passage probability matrices 
between the levels $0$ and $x$.
Denote by $\boldsymbol{\Lambda}^{(k)}_{x}$ the matrix with components
\begin{equation*}
	(\boldsymbol{\Lambda}^{(k)}_{x})_{ij}
	=
	\mathbb{P}_i^{0}
	[
	\tau_{+}(x)
	<
	\tau_{-}(0),
		\tau_{+}(x)
		<
	\mathcal{Y}_k,
	\Phi(	\tau_{+}(x)) = (j,k)
	],
	\end{equation*}
	for $i,j\in \mathcal{S}_{+}$,
	$0\leq k \leq L-1$, that is
$(\boldsymbol{\Lambda}^{(k)}_{x})_{ij}$ is the probability,
starting from $(0,i)$,
 of reaching level $x>0$ in phase $j$,
  during the stage $k$ of the Erlang,
under taboo of  level zero.
Symmetrically,
denote by $\boldsymbol{\Psi}^{(k)}_{x}$ the matrix with components
\begin{equation*}
	(\boldsymbol{\Psi}^{(k)}_{x})_{iu}
	=
	\mathbb{P}_i^{0}
	[
	\tau_{-}(0)
	<
	\tau_{+}(x),
		\tau_{-}(0)
		<
		\mathcal{Y}_k,
	\Phi(\tau_{-}(0)) = (u,k)
	]
	\end{equation*}
	with $i\in \mathcal{S}_{+}, u\in \mathcal{S}_{-}$,
$0\leq k \leq L-1$,
that is
$(\boldsymbol{\Psi}^{(k)}_{x})_{ij}$ is the probability,
starting from $(0,i)$,
of returning to the level $0$
during the stage $k$ of the Erlang
without having reached level $x$.

Furthermore,
define the matrices $\hat{\boldsymbol{\Lambda}}^{(k)}_{x}$ 
and 
$\hat{\boldsymbol{\Psi}}^{(k)}_{x}$
 with  components
\begin{align*}
	(\hat{\boldsymbol{\Lambda}}_{x}^{(k)})_{ij}
	&=
	\mathbb{P}_i^{x}
	[
	\tau_{-}(0)
	<
	\tau_{+}(x),
		\tau_{-}(0)
		<
		\mathcal{Y}_k,
	\Phi(	\tau_{-}(0)) = (j,k)
	],
	\\
	(\hat{\boldsymbol{\Psi}}^{(k)}_{x})_{iu}
	&=
	\mathbb{P}_i^{x}
	[
	\tau_{+}(x)
	<
	\tau_{-}(0),
		\tau_{+}(x)
		<
		\mathcal{Y}_k,
	\Phi(\tau_{+}(x)) = (u,k)
	],
	\end{align*}
with $i,j\in \mathcal{S}_{-},u\in \mathcal{S}_{+}$,
$0\leq k \leq L-1$.

We denote by 
$\boldsymbol{\Omega}_x=
\boldsymbol{\Psi}^{(0)}
\boldsymbol{W}^{(0)}_{x}
$,
$\boldsymbol{\Omega}^{(n,k)}_x=
\boldsymbol{\Psi}^{(n)}
\boldsymbol{W}^{(k-n)}_{x}
$,
$\hat{\boldsymbol{\Omega}}_x=
\boldsymbol{\hat{\Psi}^{(0)}}
\boldsymbol{\hat{W}}^{(0)}_{x}
$,
and 
$\hat{\boldsymbol{\Omega}}^{(n,k)}_x=
\boldsymbol{\hat{\Psi}^{(n)}}
\boldsymbol{\hat{W}}^{(k-n)}_{x}
$.
The matrices
$\boldsymbol{\Lambda}^{(k)}_{x}$
and 
$\hat{\boldsymbol{{\Lambda}}}^{(k)}_{x}$
can be expressed according to 
the matrices 
$\boldsymbol{\Psi}^{(k)}_{x}
$
and 
 $\hat{\boldsymbol{\Psi}}^{(k)}_{x}$
which are computed recursively as follows. 
 
 \begin{Thm}
 Let $x>0$.
\begin{enumerate}[(a)]
\item
 The matrices
 $ {\boldsymbol{\Lambda}}^{(0)}_{x}$
 and 
  $ \hat{{\boldsymbol{\Lambda}}}^{(0)}_{x}$
are
given by
\begin{align}
  {\boldsymbol{\Lambda}}^{(0)}_{x}
 &=
   \hat{\boldsymbol{W}}^{(0)}_{x}
   - 
{\boldsymbol{\Psi}}^{(0)}_{x}
  \hat{\boldsymbol{\Omega}}_x
  {\label{LambdaZero}}
\\
  \hat{\boldsymbol{\Lambda}}^{(0)}_{x}
     &=\boldsymbol{W}^{(0)}_{x}
   -		\hat{\boldsymbol{\Psi}}^{(0)}_{x}
	   \boldsymbol{\Omega}_x
	   {\label{LambdaHatZero}}
	 \end{align}	 
where 
\begin{align}\label{PsiZeroB}
{}\boldsymbol{\Psi}^{(0)}_{x}
&=
\left(\boldsymbol{\Psi}^{(0)}
-\hat{\boldsymbol{W}}^{(0)}_{x}
\boldsymbol{\Omega}_x\right)
(I
-
\hat{\boldsymbol{\Omega}}_x
\boldsymbol{\Omega}_x
)^{-1},
\\
\hat{\boldsymbol{\Psi}}^{(0)}_{x}
&=
\left(
\hat{\boldsymbol{\Psi}}^{(0)}-
\boldsymbol{W}^{(0)}_{x}
\hat{\boldsymbol{\Omega}}_x
\right)
(I-
\boldsymbol{\Omega}_x
\hat{\boldsymbol{\Omega}}_x
)^{-1}.
\label{PsiZeroHatB}
\end{align}
\item For $1\leq k\leq L-1$,
the matrices
$ {\boldsymbol{\Lambda}}^{(k)}_{x}$
and
 $\hat{{\boldsymbol{\Lambda}}}^{(k)}_{x}$
 are given by
	 \begin{align}
  {\boldsymbol{\Lambda}}^{(k)}_{x}
  &=
  \hat{\boldsymbol{W}}^{(k)}_{x}
  -  \sum_{{\underset{0 \leq m+n \leq k}
{0\leq m,n}}}
 \boldsymbol{\Psi}^{(m)}_{x}
  \hat{\boldsymbol{\Omega}}^{(n,k-m)}_x
  \label{LambdaK_b}
  \\
  {\boldsymbol{\hat{\Lambda}}}^{(k)}_{x}
  &=
  {\boldsymbol{W}}^{(k)}_{x}
  -  \sum_{{\underset{0 \leq m+n \leq k}{0\leq m,n}}}
\boldsymbol{\hat{\Psi}}^{(m)}_{x}
  \boldsymbol{\Omega}^{(n,k-m)}_x
    \label{LambdaHatK_b}
	 \end{align}
where 
the matrices
$\boldsymbol{\Psi}^{(k)}_{x} $
and
$\hat{\boldsymbol{\Psi}}^{(k)}_{x} $
are given by
\begin{align}
\boldsymbol{\Psi}^{(k)}_{x}
 = 
\Bigg(  \boldsymbol{\Psi}^{(k)}
& -
\sum_{\substack{
         0\leq m \leq k-1\\
            0\leq n\leq k-m 
            }}
{\boldsymbol{\Lambda}}^{(m)}_{x}
{\boldsymbol{\Omega}}^{(n,k-m)}_{x}
\nonumber
\\
& -
\Big[  \hat{ \boldsymbol{{W}}}^{(k)}_{x}
			+
			\sum_{\substack
					{
       					  0\leq m \leq k-1\\
         		   			0\leq n\leq k-m 
          					  }}
		\boldsymbol{\Psi}^{(m)}_{x}
		\hat{ \boldsymbol{\Omega}}^{(n,k-m)}_{x}
\Big]
	{\boldsymbol{\Omega}}_{x}
\Bigg)
(I-
\hat{\boldsymbol{\Omega}}_x
{\boldsymbol{\Omega}}_x
)^{-1}
\label{Psik_b}
\\
\hat{\boldsymbol{\Psi}}^{(k)}_{x} 
= 
	\Bigg(  \hat{\boldsymbol{\Psi}}^{(k)}
&-
\sum_{\substack{
         0\leq m \leq k-1\\
            0\leq n\leq k-m 
            }}
            {\boldsymbol{\hat{\Lambda}}}^{(m)}_{x}
            \hat{ \boldsymbol{\Omega}}^{(n,k-m)}_{x}
            \nonumber
            \\
&-
\Big[ \boldsymbol{{W}}^{(k)}_{x}
+
\sum_{\substack{
         0\leq m \leq k-1\\
            0\leq n\leq k-m 
            }}
{\hat{\boldsymbol{\Psi}}^{(m)}_{x}}
\boldsymbol{\Omega}^{(n,k-m)}_{x}
\Big]
\hat{\boldsymbol{\Omega}}_x
	\Bigg)
	 (I-
		{\boldsymbol{\Omega}}_x
		\hat{\boldsymbol{\Omega}}_x
		)^{-1}
\end{align}
\end{enumerate}
\end{Thm}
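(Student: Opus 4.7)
The plan is to obtain each pair $(\boldsymbol{\Lambda}^{(k)}_x,\boldsymbol{\Psi}^{(k)}_x)$ as the solution of two coupled identities produced by first-passage decompositions; the hat pair $(\hat{\boldsymbol{\Lambda}}^{(k)}_x,\hat{\boldsymbol{\Psi}}^{(k)}_x)$ will then follow by the mirror argument in which the roles of $\mathcal{S}_+$ and $\mathcal{S}_-$, and of levels $0$ and $x$, are interchanged. I concentrate on the unhatted pair.

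Start with $k=0$. I would decompose the event defining $\boldsymbol{\Psi}^{(0)}$, namely first return to $0$ at stage $0$ starting from $(0,i)$ with $i\in\mathcal{S}_+$, according to whether the level $x$ is crossed before the return. The first alternative contributes exactly $\boldsymbol{\Psi}^{(0)}_x$. In the second alternative, the process first reaches $(x,\mathcal{S}_+)$ at stage $0$ without revisiting $0$ (probability $\boldsymbol{\Lambda}^{(0)}_x$) and then descends from $x$ back to $0$ at stage $0$; by spatial homogeneity this descent has probability $\boldsymbol{\Psi}^{(0)}\boldsymbol{W}^{(0)}_x=\boldsymbol{\Omega}_x$, the composition of a first return to $x$ from above in a phase of $\mathcal{S}_-$ with the downward record passage from $x$ to $0$. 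This gives
\[
\boldsymbol{\Psi}^{(0)} \;=\; \boldsymbol{\Psi}^{(0)}_x \;+\; \boldsymbol{\Lambda}^{(0)}_x\,\boldsymbol{\Omega}_x .
\]
A parallel decomposition of $\hat{\boldsymbol{W}}^{(0)}_x$ according to whether $0$ is revisited before $x$ is reached yields
\[
\hat{\boldsymbol{W}}^{(0)}_x \;=\; \boldsymbol{\Lambda}^{(0)}_x \;+\; \boldsymbol{\Psi}^{(0)}_x\,\hat{\boldsymbol{\Omega}}_x ,
\]
where $\hat{\boldsymbol{\Omega}}_x=\hat{\boldsymbol{\Psi}}^{(0)}\hat{\boldsymbol{W}}^{(0)}_x$ is the probability of ascent from $(0,\mathcal{S}_-)$ to $(x,\mathcal{S}_+)$ at stage $0$. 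Eliminating $\boldsymbol{\Lambda}^{(0)}_x$ between the two identities, and using that $I-\hat{\boldsymbol{\Omega}}_x\boldsymbol{\Omega}_x$ is non-singular because both factors are strictly sub-stochastic by the remark preceding Theorem~\ref{RiccatiPsiEq}, delivers (\ref{PsiZeroB}); substituting back gives (\ref{LambdaZero}). The formulae (\ref{PsiZeroHatB}) and (\ref{LambdaHatZero}) follow by the mirror argument.

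For $k\ge 1$ I would repeat the same two decompositions, now convolving over the Erlang stage at which each leg of the path completes. Conditioning on the stage indices of the first $x$-crossing and of the subsequent descent (respectively of the first $0$-return and of the subsequent ascent) produces
\begin{align*}
\boldsymbol{\Psi}^{(k)} &= \boldsymbol{\Psi}^{(k)}_x \;+\; \sum_{\substack{0\le m,n\\ m+n\le k}} \boldsymbol{\Lambda}^{(m)}_x\,\boldsymbol{\Omega}^{(n,k-m)}_x , \\
\hat{\boldsymbol{W}}^{(k)}_x &= \boldsymbol{\Lambda}^{(k)}_x \;+\; \sum_{\substack{0\le m,n\\ m+n\le k}} \boldsymbol{\Psi}^{(m)}_x\,\hat{\boldsymbol{\Omega}}^{(n,k-m)}_x ,
\end{align*}
each term of the sums being a three-leg first-passage / taboo / first-passage sequence whose Erlang stages add up to $k$. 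Peeling off the $m=k$ contributions, which are the only ones carrying the unknowns $\boldsymbol{\Lambda}^{(k)}_x\boldsymbol{\Omega}_x$ and $\boldsymbol{\Psi}^{(k)}_x\hat{\boldsymbol{\Omega}}_x$, reduces these identities to the same $2\times 2$ linear system in $(\boldsymbol{\Lambda}^{(k)}_x,\boldsymbol{\Psi}^{(k)}_x)$ as in the $k=0$ step, with right-hand sides formed entirely from quantities of stage index strictly less than $k$. Solving by the same elimination yields (\ref{LambdaK_b}) and (\ref{Psik_b}); the hat formulae follow symmetrically.

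The main difficulty will be the bookkeeping of the convolutions, specifically isolating the $m=k$ block cleanly so that the coefficient $(I-\hat{\boldsymbol{\Omega}}_x\boldsymbol{\Omega}_x)$ emerges in the correct place and the inductive step is well posed. A secondary point is to verify, at each step of the induction on $k$, that every term of stage index $<k$ that appears on the right-hand side, namely $\boldsymbol{\Lambda}^{(m)}_x$ and $\boldsymbol{\Psi}^{(m)}_x$ for $m<k$ together with the blocks $\boldsymbol{W}^{(\cdot)}_x$ and $\hat{\boldsymbol{W}}^{(\cdot)}_x$ of the record exponentials, is indeed already available, so that the recursion closes.
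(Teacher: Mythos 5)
Your proof is correct and takes essentially the same approach as the paper: you write the same two first-passage decompositions, one of $\boldsymbol{\Psi}^{(k)}$ (conditioning on whether level $x$ is visited before the first return to $0$) and one of $\hat{\boldsymbol{W}}^{(k)}_x$ (conditioning on whether level $0$ is revisited before $x$ is reached), and then eliminate the unknowns using the nonsingularity of $I-\hat{\boldsymbol{\Omega}}_x\boldsymbol{\Omega}_x$. This is precisely the pair of identities and the elimination the paper uses; the only presentational difference is that you phrase the $k\ge 1$ case as peeling off the $m=k$ convolution term to exhibit a $2\times 2$ linear system, whereas the paper substitutes \eqref{LambdaK_b} directly into the decomposition of $\boldsymbol{\Psi}^{(k)}$ and reorganizes.
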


 \begin{proof}
 The matrix  $  \hat{\boldsymbol{W}}^{(0)}_{x}$
can be decomposed as the sum of  the probability 
that the process reaches level $x$
before returning to the level $0$
and the probability that the process returns to the level $0$ before reaching the level $x$.
It can thus be written as
 \begin{equation*}
  \hat{\boldsymbol{W}}^{(0)}_{x} =
  {\boldsymbol{\Lambda}}^{(0)}_{x}
  +
{\boldsymbol{\Psi}}^{(0)}  _{x}
  \hat{{\boldsymbol{\Psi}}}^{(0)}
   \hat{\boldsymbol{W}}^{(0)}_{x},
	 \end{equation*}
and therefore, equation 
 (\ref{LambdaZero})
 is proved.

 Recall that the matrix $\boldsymbol{\Psi}^{(0)}$
 is such that each component
 $\boldsymbol{\Psi}^{(0)}_{ij}$ is the probability,
 starting from $(0,i)$, 
 of returning to the level $0$
 before one exponential stage is finished, 
  with $i\in\mathcal{S}_{+}$
 and  $j\in\mathcal{S}_{-}$:
 it can thus be decomposed as the sum 
 \begin{equation}\label{ProofPsiZerob}
\boldsymbol{\Psi}^{(0)}
={}\boldsymbol{\Psi}^{(0)}_{x}
 +\boldsymbol{\Lambda}^{(0)}_{x}
\boldsymbol{\Psi}^{(0)}
  \boldsymbol{W}^{(0)}_{x},
\end{equation}
where the first term is the probability
 that the process returns to the level $0$ before crossing the level $x$,
 and the second is the probability
 that the process first visit the level $x$
 before it returns to the level $0$.
 In the second case,
 the process must return to level $x$ in a phase of 
 $\mathcal{S}_{-}$ which justifies the factor 
 $\boldsymbol{\Psi}^{(0)}$,
 before eventually going down to level $0$.
 Using
 (\ref{LambdaZero})
 in the equation
 (\ref{ProofPsiZerob})
leads to
 (\ref{PsiZeroB}).
 
 We prove ({\ref{LambdaHatZero}})
 and (\ref{PsiZeroHatB})
 in a similar manner.
For $1\leq k\leq L-1$,
we need to keep track of the stages of the Erlang and readily  obtain the expressions 
(\ref{LambdaK_b})
and
(\ref{LambdaHatK_b})
for 
the matrices
$ {\boldsymbol{\Lambda}}^{(k)}_{x}$
and
 $\hat{{\boldsymbol{\Lambda}}}^{(k)}_{x}$.

The matrix
$\boldsymbol{\Psi}^{(k)}_{x}
 $
is obtained by noting that 
\begin{equation}
\boldsymbol{\Psi}^{(k)}
=\boldsymbol{\Psi}^{(k)}_{x}
 +\sum_{{\underset{0 \leq m+n \leq k}{0\leq m,n}}}
 \boldsymbol{\Lambda}^{(m)}_{x}
\boldsymbol{\Psi}^{(n)}
  \boldsymbol{W}^{(k-n-m)}_{x},
\end{equation}
and by replacing
$ \boldsymbol{\Lambda}^{(m)}_{x}$
by its
expression given in 
 (\ref{LambdaK_b}).
 By reorganizing the terms we find the equation
 (\ref{Psik_b}).
  \end{proof}

Now we can give the distribution of the maximum
level reached during the Erlang interval,
given the initial level,
we follow the (by now) familiar accounting of the stages of the Erlang interval.
\begin{Thm}
\begin{enumerate}[(a)]
\item Take $a=0$. 
For $k\geq1$
and $x\geq0$,
\begin{align}\label{DeltaComplPlusEq}
\overline{\boldsymbol{\delta}}_{+}(0,x,k)	
	=
	&\left(I-
	\boldsymbol{\Psi}_{x}^{(0)}\Upsilon^{(0)}\right)^{-1}
	\nonumber
	\\
&
	\Bigg(
	\sum_{n=0}^{k-1}
	\boldsymbol{\Lambda}^{(n)}_{x}
		\boldsymbol{1}
	+
	\sum_{\overset{0\leq n,l}{1\leq n+l\leq k-1}}
	\boldsymbol{\Psi}^{(n)}_{x}
	\Upsilon^{(l)}
	\overline{\boldsymbol{\delta}}_{+}(0,x,k-n-l)
	\Bigg).
\end{align}

For $k\geq1$, the conditional distribution of $M_0(\mathcal{Y}_k)$
given the initial level $a=0$, 
is given
by
\begin{align}
\boldsymbol{\delta}_{+}(0,x,k)
&=
	\boldsymbol{1}
	-  
	\overline{\boldsymbol{\delta}}_{+}(0,x,k)
		,
		\label{DeltaZeroPlus}
\\
\boldsymbol{\delta}_{-}(0,x,k)
&=
	\boldsymbol{1}
	- 
	\sum_{n=0}^{k-1}
	\Upsilon^{(n)}
	\overline{\boldsymbol{\delta}}_{+}(0,x,k-n)	
		\label{DeltaZeroMoins}
\end{align}

\item
Take $a>0$.
For $k\geq1$,
\begin{align}
\overline{\boldsymbol{\delta}}_{+}(a,x,k)
=
&\left(I- \boldsymbol{\Psi}^{(0)}_{x-a}
\boldsymbol{\hat{\Psi}}^{(0)}_{a}\right)^{-1}
\nonumber
\\
&
  \Bigg[  \sum_{n=0}^{k-1}
 	\boldsymbol{\Lambda}^{(n)}_{x-a}\boldsymbol{1}
 	+  \boldsymbol{\Psi}^{(0)}_{x-a}
	\hat{\boldsymbol{\Lambda}}^{(0)}_{a}
 	\overline{\boldsymbol{\delta}}_{-}(0,x,k)
	\nonumber
	\\
 &  +
 	\sum_{\overset{0\leq n,l}{1\leq n+l\leq k-1}}
 \boldsymbol{\Psi}	^{(n)}_{x-a}
	 \Big(
		\boldsymbol{\hat{\Psi}}^{(l)}_{a}
		\overline{ \boldsymbol{\delta}}_{+}(a,x,k-n-l)
		 \nonumber
		 \\
		 & \quad\quad\quad\quad\quad\quad\quad\quad
		 \quad\quad
		 +\;
		 \hat{\boldsymbol{\Lambda}}^{(l)}_{a}\;
		 \overline{\boldsymbol{\delta}}_{-}(0,x,k-n-l)
	 \Big)
	 \Bigg].
	 \label{EqDeltaPlusComplGen}
\end{align}

For $k\geq1$, the conditional distribution of $M_0(\mathcal{Y}_k)$
given the initial level $a>0$, 
is given for $x\geq a$,
by
\begin{align} 
\boldsymbol{\delta}_{+}(a,x,k) 
& =  \boldsymbol{1}-
\overline{\boldsymbol{\delta}}_{+}(a,x,k),
\label{DeltaPlusGivenA}
\\
\boldsymbol{\delta}_{-} (a,x,k) 
&= \boldsymbol{1}-
	 \sum_{n=0}^{k-1} 
	 \Big(
	\boldsymbol{\hat{\Psi}}^{(n)}_{a}
	\overline{  \boldsymbol{\delta}}_{+}s(a,x,k-n) 
	  + \hat{\boldsymbol{\Lambda}}^{(n)}_{a}
	  \overline{\boldsymbol{\delta}}_{-}(0,x,k-n) 
	  \Big),
	  \label{DeltaMoinsGivenA}
\end{align}

and for $a>x$, by
\begin{align}
\boldsymbol{\delta}_+(a,x,k)
=
\boldsymbol{0},
\qquad
\text{and}
\qquad
\boldsymbol{\delta}_-(a,x,k)
=
\boldsymbol{0}.
\label{MaxEvident}
\end{align}
\end{enumerate}
\end{Thm}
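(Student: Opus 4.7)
The plan is to prove each of the identities by a first-passage event decomposition, in the same spirit as the proofs of Theorems~\ref{ThmProbaAbov} and \ref{ThmJointMinLevel}. Throughout, I would work with the complementary vector $\overline{\boldsymbol{\delta}}$, which is more natural here than $\boldsymbol{\delta}$ itself because the event $[M_0(\mathcal{Y}_k) > x]$ is detected precisely at the first upcrossing of level $x$.

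For Part~(a), starting from $(0, i)$ with $i \in \mathcal{S}_+$, I would condition on the first of two events. Either the process reaches level $x$ before returning to 0, which instantly achieves $M_0 > x$ and contributes $\boldsymbol{\Lambda}^{(n)}_x \mathbf{1}$; or it returns to 0 in a phase of $\mathcal{S}_-$ during stage $n$ (contributing $\boldsymbol{\Psi}^{(n)}_x$), sojourns at 0 for $l$ further stages and re-emerges in a phase of $\mathcal{S}_+$ (contributing $\Upsilon^{(l)}$), after which the problem restarts with $k - n - l$ stages left. Summing these mutually exclusive contributions and transferring the $(n, l) = (0, 0)$ self-reference to the left-hand side so that the factor $(I - \boldsymbol{\Psi}^{(0)}_x \Upsilon^{(0)})$ can be inverted produces \eqref{DeltaComplPlusEq}. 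Equation \eqref{DeltaZeroMoins} then follows by noting that a process starting at level 0 in a phase of $\mathcal{S}_-$ must first leave 0 through $\Upsilon^{(n)}$, which reduces to the case already handled; \eqref{DeltaZeroPlus} is simply the complementary probability $\mathbf{1} - \overline{\boldsymbol{\delta}}_+(0, x, k)$.

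For Part~(b), when $a > x$ we have $Z(0) = a > x$, so $M_0(\mathcal{Y}_k) \geq a > x$ almost surely and \eqref{MaxEvident} is immediate. For $0 \leq a \leq x$ and $i \in \mathcal{S}_+$, the process first performs an excursion inside the strip $[0, x]$; by spatial homogeneity (the barrier at 0 plays no role until it is actually touched), the probability of reaching $x$ before returning to $a$ during stage $n$ is $\boldsymbol{\Lambda}^{(n)}_{x-a}$ and of returning to $a$ first in a phase of $\mathcal{S}_-$ is $\boldsymbol{\Psi}^{(n)}_{x-a}$. In the latter case, the ensuing downward excursion from $(a, \mathcal{S}_-)$ either brings the process back to $a$ in $\mathcal{S}_+$ (probability $\hat{\boldsymbol{\Psi}}^{(l)}_a$, recursion with $\overline{\boldsymbol{\delta}}_+(a, x, k - n - l)$) or down to 0 in $\mathcal{S}_-$ (probability $\hat{\boldsymbol{\Lambda}}^{(l)}_a$, recursion with $\overline{\boldsymbol{\delta}}_-(0, x, k - n - l)$). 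Collecting the contributions and transferring the $(n, l) = (0, 0)$ coefficient $\boldsymbol{\Psi}^{(0)}_{x-a} \hat{\boldsymbol{\Psi}}^{(0)}_a$ of $\overline{\boldsymbol{\delta}}_+(a, x, k)$ to the left-hand side yields \eqref{EqDeltaPlusComplGen}. Equation \eqref{DeltaMoinsGivenA} arises from the analogous one-step decomposition for $i \in \mathcal{S}_-$ at level $a$, where the very first event is either a return to $a$ in $\mathcal{S}_+$ or a first passage to 0 in $\mathcal{S}_-$ and no self-reference needs to be inverted.

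The main obstacle will be careful bookkeeping rather than any conceptual difficulty. I must (i) identify which first-passage matrix governs each excursion, paying attention to the two reference levels 0 and $a$ that produce the choice between $\boldsymbol{\Lambda}^{(n)}_{x-a}, \boldsymbol{\Psi}^{(n)}_{x-a}$ and their hatted counterparts $\hat{\boldsymbol{\Lambda}}^{(n)}_a, \hat{\boldsymbol{\Psi}}^{(n)}_a$; (ii) isolate the $(n, l) = (0, 0)$ feedback in each decomposition so that the resulting linear system has an invertible coefficient matrix, the strict substochasticity of the relevant $(0, 0)$ blocks before time $T$ guaranteeing that each factor $(I - \cdots)^{-1}$ exists; and (iii) keep the remaining number of Erlang stages $k - n - l$ aligned throughout the recursions, using the memoryless property of the exponential stage durations to justify restarting the problem with a fresh Erlang count. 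Once this accounting is set, each identity follows directly from summing probabilities over the mutually exclusive first-event scenarios.
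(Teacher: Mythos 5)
Your proposal follows the same route as the paper: the same first-passage decomposition of $\overline{\boldsymbol{\delta}}$ (reach $x$ directly via $\boldsymbol{\Lambda}^{(n)}$, or return to the reference level via $\boldsymbol{\Psi}^{(n)}_{\cdot}$ and recurse through $\Upsilon^{(l)}$ or $\hat{\boldsymbol{\Psi}}^{(l)}_a$, $\hat{\boldsymbol{\Lambda}}^{(l)}_a$), the same isolation of the $(n,l)=(0,0)$ self-reference to produce the $(I-\cdot)^{-1}$ factor, and the same one-step reductions for the $\mathcal{S}_-$ and $a>x$ cases. The argument is correct and matches the paper's proof, including the shift-by-$a$ identification $\boldsymbol{\Lambda}^{(n)}_{x-a}$, $\boldsymbol{\Psi}^{(n)}_{x-a}$, which the paper uses without explicit mention of spatial homogeneity.
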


\begin{proof}
For $k\geq1$, $x\geq0$,
any $i$,
$\overline{\boldsymbol{\delta}}_{i}(0,x,k)	$
is the probability that the process reaches level $x$
before time $\mathcal{Y}_k$.
Thus,
\begin{align}
\overline{\boldsymbol{\delta}}_{+}(0,x,k)	
	&=
	\sum_{n=0}^{k-1}
	\boldsymbol{\Lambda}^{(n)}_{x}
	\boldsymbol{1}
	+
	\sum_{n=0}^{k-1}
	\boldsymbol{\Psi}^{(n)}_{x}
	\sum_{l=0}^{k-n-1}
	\Upsilon^{(l)}
	\overline{\boldsymbol{\delta}}_{+}(0,x,k-n-l).
	\label{MaxFluid}
\end{align}
To show this :
 the 
first term is the probability that the 
level reaches $x$ without returning to level $0$;
the second term is the probability that the process first comes back in level $0$ under taboo of level $x$,
during the $n$-th stage,
with probability
	$\boldsymbol{\Psi}^{(n)}_{x}$,
		then the level stays in level $0$
	for $l$ Exponential intervals,
	with probability
	$\Upsilon^{(l)}$
	at which time
	the process is in the initial situation, but with $k-n-l$ stages left:
	this is given by the probability
$\overline{\boldsymbol{\delta}}_{+}(0,x,k-n-l)$.	
We reorganize the terms in equation
(\ref{MaxFluid})
and obtain
(\ref{DeltaComplPlusEq}),
as $(I-	\boldsymbol{\Psi}^{(0)}_{x}
\Upsilon^{(0)})
$
is non-singular.

For $k\geq1$,
the probability vector 
$\boldsymbol{\delta}_{+}(0,x,k)$
is trivially the complement probability 
vector
of (\ref{DeltaComplPlusEq}), so we have
Equation (\ref{DeltaZeroPlus}).

For $k\geq1$, equation (\ref{DeltaZeroMoins})
 is obtained by noting that for $i\in\mathcal{S}_-$,  the complementary probability 
 of
$ \boldsymbol{\delta}_{-}(0,x,k)$
is  given by
 \begin{equation*}
\overline{\boldsymbol{\delta}}_{-}(0,x,k)
=
\sum_{n=0}^{k-1}
	\Upsilon^{(n)}
	\overline{\boldsymbol{\delta}}_{+}(0,x,k-n):
\end{equation*}
it is the probability $\Upsilon^{(n)}$ that
  the level first  leaves $0$ in one of the $k$ Erlang stages, times the probability 
  $\overline{\boldsymbol{\delta}}_{+}(0,x,k-n)$
   that the level reaches the level $x$, in one of the $k-n$ stages left.
  So, equation (\ref{DeltaZeroMoins}) is proved.

  Take $a>0$, and $n\geq1$.
  For $x\geq a$,
  $\overline{\boldsymbol{\delta}}_{+}(a,x,n)	$ can be decomposed in a manner similar to  (\ref{MaxFluid})   as follows,
  \begin{align}
\overline{\boldsymbol{\delta}}_{+}(a,x,k)	
	=
	&\sum_{n=0}^{k-1}
	\boldsymbol{\Lambda}^{(n)}_{x-a}
	\nonumber
	+\sum_{n=0}^{k-1}\sum_{l=0}^{k-n-1}
	\boldsymbol{\Psi}^{(n)}_{x-a}
	\hat{\boldsymbol{\Lambda}}^{(l)}_{a}
		 \overline{\boldsymbol{\delta}}_{-}(0,x,k-n-l)
		 \nonumber
		 \\
		 &+
		 \sum_{n=0}^{k-1}
		  \sum_{l=0}^{k-n-1}
	\boldsymbol{\Psi}^{(n)}_{x-a} 
	\hat{	\boldsymbol{\Psi}}^{(l)}_a 
			 \overline{\boldsymbol{\delta}}_{+}(a,x,k-n-l).
	\label{EqDeltaPlusComplProof}
\end{align}
Indeed, we have three cases:

\noindent\textbf{Case 1.} 
The process avoids $a$:
it reaches $x$ in the $n$-th stage without returning to level $a$.

\noindent\textbf{Case 2.} 
The process returns to $a$ and  drops to $0$: 
the process first comes back in level $a$  with the taboo of level $x$, with probability
$\boldsymbol{\Psi}^{(n)}_{x-a}$,
then it reaches level $0$ in $l$, one of the $k-n$ stages left, before returning to $a$,
with probability ${\hat{\boldsymbol{\Lambda}}}^{(l)}_{a}$,
and finally the process reaches level $x$,
starting from level $0$,  with probability
$ \overline{\boldsymbol{\delta}}_{-}(0,x,k-n-l)$.

\noindent\textbf{Case 3.} 
The process returns twice to $a$
avoiding $x$ and $0$ :
 first it comes back to level $a$ from above in the $n$-th stage under taboo of level $x$, with probability 
$\boldsymbol{\Psi}^{(n)}_{x-a}$,
then it comes back to level $a$ from below
in the $l$-th stage, 
with the taboo of level $0$,
with probability
$\hat{	\boldsymbol{\Psi}}^{(l)}_a$
and 
finally the level reaches $x$ in one of the $k-n-l$ remaining stages,
with probability
$ \overline{\boldsymbol{\delta}}_{+}(a,x,k-n-l)$.

After algebraic manipulation
of the terms in (\ref{EqDeltaPlusComplProof})
we obtain 
(\ref{EqDeltaPlusComplGen})
since
 $(I- \boldsymbol{\Psi}^{(0)}_{x-a}
\boldsymbol{\hat{\Psi}}^{(0)}_{a})$
is non-singular.

For $k\geq1$ and $x>a$,
$\boldsymbol{\delta}_{+}(a,x,k)$
given in Equation (\ref{DeltaPlusGivenA})
is immediate.

To show (\ref{DeltaMoinsGivenA}),
we note that
  the complementary probability vector
 of
$ \boldsymbol{\delta}_{-}(a,x,k)$,
can be decomposed as follows,
\begin{equation}
 \overline{\boldsymbol{\delta}}_{-}(a,x,k)
 =\sum_{n=0}^{k-1}
 \boldsymbol{\hat{\Psi}}^{(n)}_{a}
	 \overline{ \boldsymbol{\delta}}_{+}(a,x,k-n) 
	  +
	  \sum_{n=0}^{k-1}
	  \hat{\boldsymbol{\Lambda}}^{(n)}_{a}
	  \overline{\boldsymbol{\delta}}_{-}(0,x,k-n).
 \end{equation}
 To see this:
 	the first term is the probability that the process first returns to the level $a$  in stage $l$, before it reaches level $0$, i.e.
$\boldsymbol{\hat{\Psi}}^{(n)}_{a}$,
multiplied by the probability that the level reaches level $x$ during one of the $k-n$ stages left, i.e.
$\overline{\boldsymbol{\delta}}_{+}(a,x,k-n)$;
	the second term is the probability that the process first reaches the level $0$ before returning to level $a$, this is 
	$\hat{\boldsymbol{\Lambda}}^{(n)}_{a}$, multiplied by the probability that the process reaches level $x$, given that the initial level is $0$,
	the phase is in $\mathcal{S}_-$ and that there are $k-n$ stages left,
	i.e. $\overline{\boldsymbol{\delta}}_{-}(0,x,k-n)$.

If $a>x$, then obviously 
$\boldsymbol{\delta}(a,x,k)=0$, as it is not possible that the maximum of the process
should be less
 or equal to $x$ if the initial level is already greater then $x$. This gives (\ref{MaxEvident}).
\end{proof}

Note that equations 
(\ref{DeltaComplPlusEq})
and (\ref{EqDeltaPlusComplGen})
simplify a lot when $k=1$ 
because in this case, they contain sums on empty sets.

\subsection{Joint distribution of $m_0(T)$ and $Z(T)$}

Denote the joint distribution vector of $m_0(\mathcal{Y}_{k})$ and  $Z(\mathcal{Y}_{k})$ by
 $ \mathcal{P}^{\boldsymbol{\rho,q}}(a,x,y,k)$,
 which is the vector with components
 \begin{equation*}
 \mathcal{P}_i^{\boldsymbol{\rho,q}}(a,x,y,k)
 =
 \mathbb{P}_{i}^{a}\left[m_0(\mathcal{Y}_k)\leq x,Z(\mathcal{Y}_k)\leq y\right],
 \end{equation*}
 where  $i\in\mathcal{S}$,
 $1\leq k \leq L$,
 $a,x\geq0$.

\begin{Thm}
The conditional joint distribution of 
$m_0(\mathcal{Y}_k)$
and 
$Z(\mathcal{Y}_k)$
given the initial level and the initial phase is given as follows:
\begin{enumerate}[(a)]
\item If $0 \leq a \leq x$,
\begin{align}
\mathcal{P}^{\boldsymbol{\rho,q}}_+(a,x,y,k)
=\boldsymbol{q}_{+}(a,y,k),
\quad
\text{and}
\quad
\mathcal{P}^{\boldsymbol{\rho,q}}_-(a,x,y,k)
=\boldsymbol{q}_{-}(a,y,k).
\label{ZJointMinZTCaseA}
\end{align}
\item
If $0\leq x<a$,
\begin{align}
\mathcal{P}_{+}^{\boldsymbol{\rho,q}}(a,x,y,k)
&= 
\sum_{n=0}^{k-1}
{\boldsymbol{\Psi}^{(n)}}
\sum_{l=0}^{k-n-1}
\mathcal{P}_{-}^{\boldsymbol{\rho,q}}(a,x,y,k-n),
\label{ZJOINTPROBACASE2Plus}
\\
\mathcal{P}_{-}^{\boldsymbol{\rho,q}}(a,x,y,k)
&= 
\sum_{n=0}^{k-1}
\boldsymbol{W}^{(n)}_{a-x}
\boldsymbol{q}_{-}(x,y,k-n).
\label{ZJOINTPROBACASE2Minus}
\end{align}
\end{enumerate}
\end{Thm}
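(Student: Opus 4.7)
The plan is to follow the same two-step conditioning argument used in the proof of Theorem \ref{ThmJointMinLevel} for the unrestricted random walk, adapting it to account for the boundary at level $0$ through the fluid-queue distribution $\boldsymbol{q}$.

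For part (a), since $Z(0) = a$ and $a \leq x$, one has $m_0(\mathcal{Y}_k) \leq a \leq x$ almost surely. Hence the event $\{m_0(\mathcal{Y}_k) \leq x\}$ has probability one, and the joint distribution collapses to the marginal distribution of $Z(\mathcal{Y}_k)$, namely $\boldsymbol{q}(a,y,k)$ as given in $(\ref{defdeqvect})$. This immediately establishes $(\ref{ZJointMinZTCaseA})$.

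For part (b) with $0 \leq x < a$, the key observation is that since $x \geq 0$, any descent of $Z$ from $a$ to level $x$ occurs strictly above the barrier, so on that portion of the path $Z$ coincides with a translate of the unrestricted random walk $X$, and the downward-record probabilities $\boldsymbol{W}^{(n)}_{a-x}$ apply by spatial homogeneity. For $\varphi(0)\in\mathcal{S}_-$, the process first reaches level $x$ during some stage $n \in \{0,\ldots,k-1\}$ in a phase of $\mathcal{S}_-$ with probability $\boldsymbol{W}^{(n)}_{a-x}$; given this, and using the strong Markov property at the first-passage time, the remaining $k-n$ stages are conducted from level $x$ in $\mathcal{S}_-$ with the barrier now back in force, and the conditional probability that $Z(\mathcal{Y}_k) \leq y$ (with $m_0(\mathcal{Y}_k) \leq x$ automatic) is precisely $\boldsymbol{q}_-(x,y,k-n)$, yielding $(\ref{ZJOINTPROBACASE2Minus})$. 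For $\varphi(0)\in\mathcal{S}_+$, the process must first return from above to the initial level $a$ during some stage $n$ with probability $\boldsymbol{\Psi}^{(n)}$, landing in a phase of $\mathcal{S}_-$; one then applies the $\mathcal{S}_-$ case to the remaining $k-n$ stages to obtain $(\ref{ZJOINTPROBACASE2Plus})$.

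The only point requiring care, though rather minor, is justifying that the barrier at $0$ may be ignored on the path segment from the initial time until the first hit of $x$: this is exactly where the hypothesis $x \geq 0$ enters. Once the process reaches $x$, the strong Markov decomposition lets one substitute the fluid-queue quantity $\boldsymbol{q}$, which already incorporates the barrier, for the rest of the interval, so the whole argument is a direct analogue of the random-walk proof, with $\boldsymbol{r}$ replaced by $\boldsymbol{q}$ on the restart.
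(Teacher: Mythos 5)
Your proof is correct and follows essentially the same two-step conditioning argument as the paper: part (a) collapses the joint law to the marginal $\boldsymbol{q}$ because $m_0(\mathcal{Y}_k)\leq a\leq x$ almost surely, and part (b) decomposes at the first passage to level $x$ (preceded, when $\varphi(0)\in\mathcal{S}_+$, by a first return to level $a$ via $\boldsymbol{\Psi}^{(n)}$), with $\boldsymbol{q}$ taking over after the restart. Your explicit remark that the hypothesis $x\geq0$ is what allows the boundary at $0$ to be ignored on the descent from $a$ to $x$ is a worthwhile clarification that the paper leaves implicit.
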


\begin{proof}
Equation (\ref{ZJointMinZTCaseA})
is easily justified:
$m_0({\mathcal{Y}_k})$
is at most equal to $a$ so that
 if
$x\geq a \geq 0$,
 \begin{equation*}
  \mathcal{P}_i^{\boldsymbol{\rho,q}}(a,x,y,k)
 =
  \mathbb{P}_{i}^{a}\left[Z(\mathcal{Y}_k)\leq y\right]
=  {q}_i(a,y,k),
 \end{equation*}
 by definition,
and is
 given in 
 (\ref{defdeqvect}).

 Take now
 $0\leq x<a$.
 Equation (\ref{ZJOINTPROBACASE2Minus}) is interpreted as follows.
 Given a phase in $\mathcal{S}_-$,
 the process must reach down to $x$ during the $n$-th exponential interval, for some $n$,
 and, 
 being at level $x$,
 it has to be at most equal to $y$ at the end of the remaining time.
 The probability of the first part of the trajectory is 
$\boldsymbol{W}^{(n)}_{a-x}$
and the probability of the second part is 
$\boldsymbol{q}_{-}(x,y,k-n)$.
 
Equation (\ref{ZJOINTPROBACASE2Plus})
is found by noting that the process reaches  level $x$, given the initial level $a$
in a phase of $\mathcal{S}_+$,
by first returning to level $a$ in one of the  $k$  exponential stages, with probability
${\boldsymbol{\Psi}^{(n)}}$,
$n\in\{0,...,k-1\}$. Then the situation is 
the same as in the previous case.
\end{proof}

\subsection{Joint distribution of $M_0(T)$ and $Z(T)$}

 Denote the joint distribution of $M_0(\mathcal{Y}_k)$ and  $Z(\mathcal{Y}_k)$ by
 $ \mathcal{P}^{\boldsymbol{\delta,q}}(a,x,y,k)$,
 which is the vector with components
 \begin{equation}
 \mathcal{P}_i^{\boldsymbol{\delta,q}}(a,x,y,k)
 =
 \mathbb{P}_{i}^{a}\left[M_0(\mathcal{Y}_k)\leq x,Z(\mathcal{Y}_k)\leq y\right],
 \end{equation}
 where  $i\in\mathcal{S}$,
 $1\leq k \leq L$,
 $a,x\geq0$.
To simplify the notation in what follows, we define the vector of probabilities $\boldsymbol{\mathcal{Q}}(a,x,y,k)$
 with components
\begin{align}
\mathcal{Q}_i(a,x,y,k)
=
 \mathbb{P}_{i}^{a}\left[M_0(\mathcal{Y}_k)> x,Z(\mathcal{Y}_k)\leq y
 \right].
 \label{ComplJointProbaMaxFF}
 \end{align}
 Of course, we have
\begin{align}\label{JointPM}
\mathcal{P}^{\boldsymbol{\delta,q}}_{}(a,x,y,k)
&=
\boldsymbol{q}(a,y,k)
-
\boldsymbol{\mathcal{Q}}(a,x,y,k),
\end{align}
where 
$\boldsymbol{q}(a,x,k)$
is given in the Theorem  \ref{DistribTimeTFF}.

\begin{Thm}
 The joint distribution of $M_0(\mathcal{Y}_k)$ and $Z(\mathcal{Y}_k)$ given the initial level and the initial phase is
 given by (\ref{JointPM})
 where $\boldsymbol{\mathcal{Q}}(a,x,y,k)$
 is characterized
  as follows. 
  
\begin{enumerate}[(a)]
\item Take $0=a\leq x $ and $1 \leq k \leq L$.
\begin{align}
\boldsymbol{\mathcal{Q}}_{+}(0,x,y,k)
&  = \left(I-\boldsymbol{\Psi}^{(0)}_x\Upsilon^{(0)}\right)^{-1}
\Big(
\sum_{n=0}^{k-1}
\boldsymbol{\Lambda}^{(n)}_{x}
\boldsymbol{q_{+}}(x,y,k-n)
\nonumber
\\
& \quad
+ \sum_{\overset{0\leq m,n}{1\leq m+n\leq k-1}}
\boldsymbol{\Psi}^{(n)}_{x}
\Upsilon^{(m)}
\boldsymbol{\mathcal{Q}}_{+}(0,x,y,k-m-n)
\Big),
\label{QPLUS0XYKeq}
\\
\boldsymbol{\mathcal{Q}}_{-}(0,x,y,k)
&=
\sum_{n=0}^{k-1}
\Upsilon^{(n)}
\boldsymbol{\mathcal{Q}}_{+}(0,x,y,k-n),
\label{QMoins0XYKeq}
\end{align}

\item Take $a>0$.

\begin{enumerate}[(i)]
\item
Take $x\geq a$.
\begin{align}
{\boldsymbol{\mathcal{Q}}_{+}(a,x,y,k)}
&  = \Big(I-
\boldsymbol{\Psi}^{(0)}_{x-a}
\hat{\boldsymbol{\Psi}}^{(0)}_{a}
\Big)^{-1}
\Bigg(
\sum_{n=0}^{k-1}
\boldsymbol{\Lambda}^{(n)}_{x-a}
\boldsymbol{q_{+}}(x,y,k-n)
\nonumber
\\
& \quad\quad\quad\quad \quad\quad\quad\quad\quad + 
\sum_{\overset{0\leq m,n}{1\leq m+n\leq k-1}}
\boldsymbol{\Psi}^{(m)}_{x-a}
\mathcal{L}(n,k-m)
\Bigg),
\label{QPlusAXYeq}
\\
\boldsymbol{\mathcal{Q}}_{-}(a,x,y,k)
&=
\sum_{n=0}^{k-1}
\mathcal{L}(n,k),
\label{QMoinsAXYKeq}
\end{align}
where
\begin{equation}\label{EqofL}
\mathcal{L}(n,k)=
\hat{\boldsymbol{\Psi}}^{(n)}_{a}
\boldsymbol{\mathcal{Q}}_{+}(a,x,y,k-n)
+
 \hat{\boldsymbol{\Lambda}}^{(n)}_{a}
\boldsymbol{\mathcal{Q}}_{-}(0,x,y,k-n).
\end{equation}

\item Take $x<a$.
  \begin{align}
\boldsymbol{\mathcal{Q}}_{+}(a,x,y,k)
=
\boldsymbol{q}_{+}(a,y,k)
\qquad
\text{and}
\qquad
\boldsymbol{\mathcal{Q}}_{-}(a,x,y,k)
=
\boldsymbol{q}_{-}(a,y,k).
\end{align}
\end{enumerate}
\end{enumerate}
\end{Thm}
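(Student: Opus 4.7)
The strategy is to work with the complement $\boldsymbol{\mathcal{Q}}(a,x,y,k)$ defined in (\ref{ComplJointProbaMaxFF}) and, in every case, perform a first-passage decomposition of the trajectory with respect to the two levels $0$ and $x$, using the taboo matrices $\boldsymbol{\Lambda}^{(\cdot)}_{\cdot}$, $\boldsymbol{\Psi}^{(\cdot)}_{\cdot}$, $\hat{\boldsymbol{\Lambda}}^{(\cdot)}_{\cdot}$, $\hat{\boldsymbol{\Psi}}^{(\cdot)}_{\cdot}$ together with the sojourn matrix $\Upsilon^{(\cdot)}$ for the time spent at the barrier. Once the first-passage epoch is identified, the remaining trajectory has a smaller number of stages left and reduces to the quantity $\boldsymbol{q}$ of Theorem \ref{DistribTimeTFF} or to $\boldsymbol{\mathcal{Q}}$ at a simpler argument; the linear system so obtained is solved by inverting the kernel $I-\boldsymbol{\Psi}^{(0)}_{\cdot}\Upsilon^{(0)}$ or $I-\boldsymbol{\Psi}^{(0)}_{x-a}\hat{\boldsymbol{\Psi}}^{(0)}_{a}$, exactly as was done for $\overline{\boldsymbol{\delta}}$ in the previous theorem.

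For case (a), $a=0$, I start from phase $i\in\mathcal{S}_+$ and condition on what happens before the process first visits either $0$ or $x$ again. Two disjoint sub-events exhaust all trajectories: either the process reaches $x$ during stage $n$ under taboo of $0$, with probability $\boldsymbol{\Lambda}^{(n)}_x$, and the remaining $k-n$ stages of the future evolve from initial level $x$, contributing $\boldsymbol{q}_+(x,y,k-n)$; or the process returns to $0$ during stage $n$ under taboo of $x$, with probability $\boldsymbol{\Psi}^{(n)}_x$, in a phase of $\mathcal{S}_-$, then the Erlang clock ticks for $m$ stages before the fluid queue leaves $0$ (matrix $\Upsilon^{(m)}$), after which the problem restarts from $(0,\mathcal{S}_+)$ with $k-n-m$ stages left. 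Summing and isolating the self-reference (which produces the $\boldsymbol{\Psi}^{(0)}_x\Upsilon^{(0)}$ term) yields (\ref{QPLUS0XYKeq}). The identity (\ref{QMoins0XYKeq}) then follows immediately by starting the trajectory from $(0,\mathcal{S}_-)$, waiting $n$ stages at the barrier, and leaving in $\mathcal{S}_+$, which reduces to (\ref{QPLUS0XYKeq}) with $k-n$ remaining stages.

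For case (b)(i), $0<a\leq x$, the same pattern is applied but with the two barriers now being $a$ (from above) and $x$. Starting from $(a,\mathcal{S}_+)$, either the process reaches $x$ under taboo of $a$ in stage $n$, contributing $\boldsymbol{\Lambda}^{(n)}_{x-a}\boldsymbol{q}_+(x,y,k-n)$, or it returns to $a$ in stage $m$ under taboo of $x$, with probability $\boldsymbol{\Psi}^{(m)}_{x-a}$, in a phase of $\mathcal{S}_-$. In the latter case, a second first-passage decomposition is needed below $a$: either the process returns to $a$ under taboo of $0$, with probability $\hat{\boldsymbol{\Psi}}^{(l)}_a$, restarting the same problem, or the process reaches $0$ under taboo of $a$, with probability $\hat{\boldsymbol{\Lambda}}^{(l)}_a$, after which the relevant quantity is $\boldsymbol{\mathcal{Q}}_-(0,x,y,\cdot)$ of case (a). This two-branch continuation is precisely the quantity $\mathcal{L}(n,k)$ introduced in (\ref{EqofL}). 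Collecting the self-referential term on $\boldsymbol{\mathcal{Q}}_+(a,x,y,\cdot)$ and inverting $I-\boldsymbol{\Psi}^{(0)}_{x-a}\hat{\boldsymbol{\Psi}}^{(0)}_{a}$ gives (\ref{QPlusAXYeq}); the equation (\ref{QMoinsAXYKeq}) for $\boldsymbol{\mathcal{Q}}_-(a,x,y,k)$ is obtained by applying only the below-level-$a$ decomposition from a phase in $\mathcal{S}_-$, which again reduces to $\mathcal{L}(n,k)$. For case (b)(ii), if $x<a$ then $M_0(\mathcal{Y}_k)\geq a>x$ with probability one, so $[M_0(\mathcal{Y}_k)>x]$ is certain and $\boldsymbol{\mathcal{Q}}(a,x,y,k)=\boldsymbol{q}(a,y,k)$.

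The bookkeeping in case (b)(i) will be the main obstacle: one must keep three distinct first-passage events (up to $x$ from above $a$, down to $a$ from above, and then either back up to $a$ or down to $0$) carefully indexed by the Erlang stage, ensure the taboo conditions are respected, and verify that the non-singularity of $I-\boldsymbol{\Psi}^{(0)}_{x-a}\hat{\boldsymbol{\Psi}}^{(0)}_{a}$ follows from the sub-stochasticity already noted in the Remark. Once these accounting steps are in order, the algebra proceeds as in the proof of the preceding theorem for $\overline{\boldsymbol{\delta}}$.
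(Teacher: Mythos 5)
Your proposal is correct and takes essentially the same approach as the paper: a first-passage decomposition with respect to the taboo levels $0$ and $x$ (respectively $a$ and $x$), using the matrices $\boldsymbol{\Lambda}$, $\boldsymbol{\Psi}_{\cdot}$, $\hat{\boldsymbol{\Lambda}}$, $\hat{\boldsymbol{\Psi}}_{\cdot}$, $\Upsilon$ to account for the Erlang stage at each passage, followed by isolating the self-referential term and inverting the kernel. The identification of the two-branch continuation as $\mathcal{L}(n,k)$, the reduction of $\boldsymbol{\mathcal{Q}}_-$ to $\boldsymbol{\mathcal{Q}}_+$, and the trivial case $x<a$ all match the paper's argument.
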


\begin{proof}
Take $a=0$
and
 $x\geq 0$.
Then,
$\boldsymbol{\mathcal{Q}}_{+}(0,x,y,k)
$
is as follows
\begin{align}
\boldsymbol{\mathcal{Q}}_{+}(0,x,y,k)
 &=
 \sum_{n=0}^{k-1}
\boldsymbol{\Lambda}^{(n)}_{x}
\boldsymbol{q_{+}}(x,y,k-n)
 \nonumber
\\
&\quad +
\sum_{\overset{0\leq m,n}{1\leq m+n\leq k-1}}
\boldsymbol{\Psi}^{(n)}_{x}
\Upsilon^{(m)}
\boldsymbol{\mathcal{Q}}_{+}(0,x,y,k-m-n),
\label{QPLUS0XYKeqProof}
\end{align}
for
$y\geq0$.
Indeed, the first term, is the probability that the level reaches level $x$ 
 in stage $n$
before returning to level $0$; then one have the conditional probability that given that the process is in level $x$ in a phase of 
$\mathcal{S}_+$,
the process has to be less than $y$ at time
$\mathcal{Y}_{k-n}$.
The second term is the probability that the level returns to $0$ in a phase of $\mathcal{S}_-$ before level $x$ is reached, with probability $\boldsymbol{\Psi}^{(n)}_{x}$, then it spends some time in level $0$, with probability $\Upsilon^{(m)}$,
and when the phase changes to $\mathcal{S}_+$,
the situation is as the initial one, except that there are $k-n-m$ stages left. After a reorganization of the terms of
(\ref{QPLUS0XYKeqProof})
we  obtain
(\ref{QPLUS0XYKeq}).
To prove (\ref{QMoins0XYKeq}) is obvious.

Take $x\geq a >0$.
If the initial phase is in $\mathcal{S}_-$,
there are two ways to reach $x$ before $\mathcal{Y}_k$ and to be below $y$ at time $\mathcal{Y}_k$, which are
 combined in $\mathcal{L}(\cdot,\cdot)$ given in (\ref{EqofL}). The first is to return from below to level $a$ in a phase of $\mathcal{S}_+$  before time $\mathcal{Y}_k$ and before reaching level $0$, with probability 
$\hat{\boldsymbol{\Psi}}^{(n)}_{a}$;
 then the situation is as if the process starts in 
 $\mathcal{S}_+$
 but with $k-n$
 stages left only, thus we have to multiply this by 
 $\boldsymbol{\mathcal{Q}}_{+}(a,x,y,k-n)$.
 The second way is to reach level $0$
 before returning to $a$,
 with probability $ \hat{\boldsymbol{\Lambda}}^{(n)}_{a}$
 and then again, the situation is as if the process starts in a phase of $\mathcal{S}_-$, in level $0$, this probability is given by 
 $\boldsymbol{\mathcal{Q}}_{-}(0,x,y,k-n)$.

The proof  of (\ref{QPlusAXYeq})
is nearly identical to the proof of (\ref{QPLUS0XYKeq}).
The first term in the big bracket is 
equal to the first term in 
(\ref{QPLUS0XYKeqProof})
starting from $a$ instead of $0$;
but the second one is slighlty different because when the level first come back to the initial level $a$, before crossing level $x$, with probability
$\boldsymbol{\Psi}^{(m)}_{x-a}$, 
it does so in a phase of $\mathcal{S}_-$.
 Then there are two possibilities
which are given in $\mathcal{L}(n,k-m)$ as there are $k-m$
stages left.
A simple algebraic manipulation leads then to equation
(\ref{QPlusAXYeq}).

The case $0\leq x<a$ is obvious.
\end{proof}

Remark that here again,
we have important simplifications when $k=1$
for equations
(\ref{QPLUS0XYKeq})
and
(\ref{QPlusAXYeq})
 as they contain sums on empty sets.

\section{Numerical illustration}\label{ILL}
We illustrate our results with the following example: the value of some asset normally evolves in one of two environments: it increases in environment 1 and decreases otherwise. Occasionally, the rates of variation become much higher, for short periods of time, indicating unusually high activity.
Precisely, the generator is
\begin{equation*}
A=
\begin{array}{cc}
 & 1\hspace{1.5cm}2\hspace{1.5cm}3\hspace{1.5cm}4\\
\begin{array}{c}
1\\
2\\
3\\
4
\end{array} & \left[\begin{array}{cc|cc}
-\lambda-\omega & \lambda & p\omega & (1-p)\omega\\
\lambda & -\lambda-\omega & p\omega & (1-p)\omega\\
\hline \mu & 0 & -\mu-\beta & \beta\\
0 & \mu & \beta & -\mu-\beta
\end{array}\right]
\end{array}.
\end{equation*}
Phases $1$ and $2$  correspond to the calm environment, (CE) with $c_1=2$
and $c_2=-1$,
to reflect the fact that the value of the asset is generally increasing.
Phases $3$ and $4$
correspond to the excited environment (EE),
with $c_3=10$,
$c_4=-10$.
The unit of time is one week,
and we take $\lambda = 1$.
The parameter 
$\omega$
is equal to $0.25$
and $\mu=1$
so that the process remains for four weeks in the average in the CE before moving to the EE where
it remains for one week, on average.
Finally, $\beta=7$,
so that during the EE,
switching from increase to decrease occurs every day.

We take  $\varphi(0)=2$
 so that the level starts decreasing
at the rate $-1$ and we assume that
$X(0)=0$.

The distribution  for $L$ fixed and different maturities $T$
is depicted on Figure \ref{Fig2EX}.
Globally we see that $X(T)$ increases when $T$ increases. This is du to the fact that $c_1=2$
and $c_2=-1$
so that the stationary  drift 
	 $\boldsymbol{\alpha} C \boldsymbol{1}$
is slightly positive,
where
$\boldsymbol{\alpha}$ is the stationary probability vector of $A$
and $C$ is the rate matrix.

\begin{figure}
\begin{center}
\includegraphics[scale=0.5]{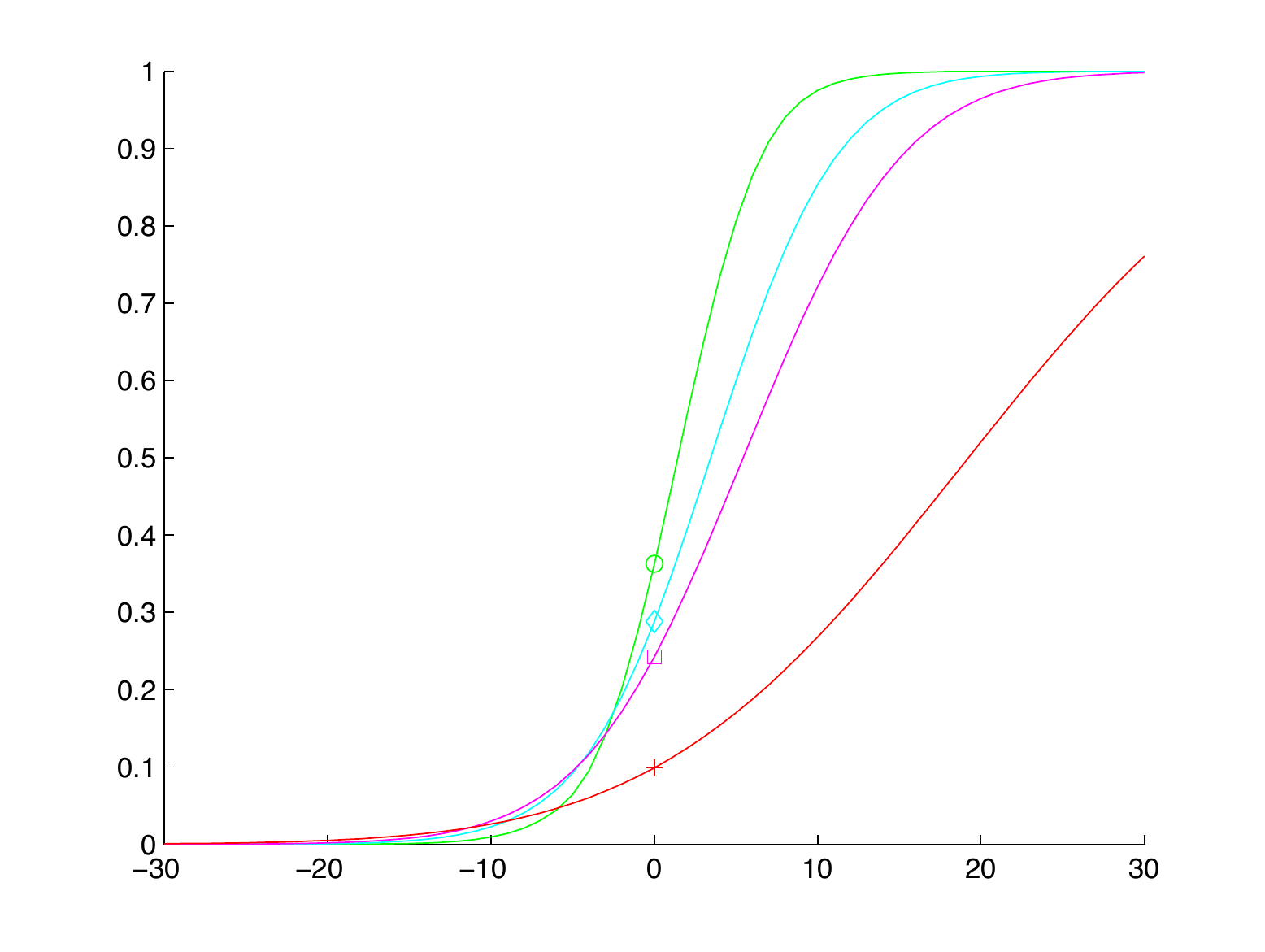}
\caption{Distribution function of the level at different maturities, where $L=30$ is constant.
 The curve with the symbol
  {\textcolor{green}{$\ocircle$}}  for $T=5$,
{\textcolor{cyan}{$\Diamond$}}  for $T=10,$ 
{\textcolor{magenta}{$\square$}}  for $T=15$
and
{\textcolor{red}{$+$}}  for $T=50$.
 }\label{Fig2EX}
 \end{center}
\end{figure}

We plot on   Figure \ref{Figure1Ex} the distribution function of the level at Erlang maturities
$T\sim\mbox{Erl}\left(L/\theta,L\right)$ for $\theta=10$  and $L=1,2,5,10$ and $30$.
\begin{figure}
\includegraphics[scale=0.5]{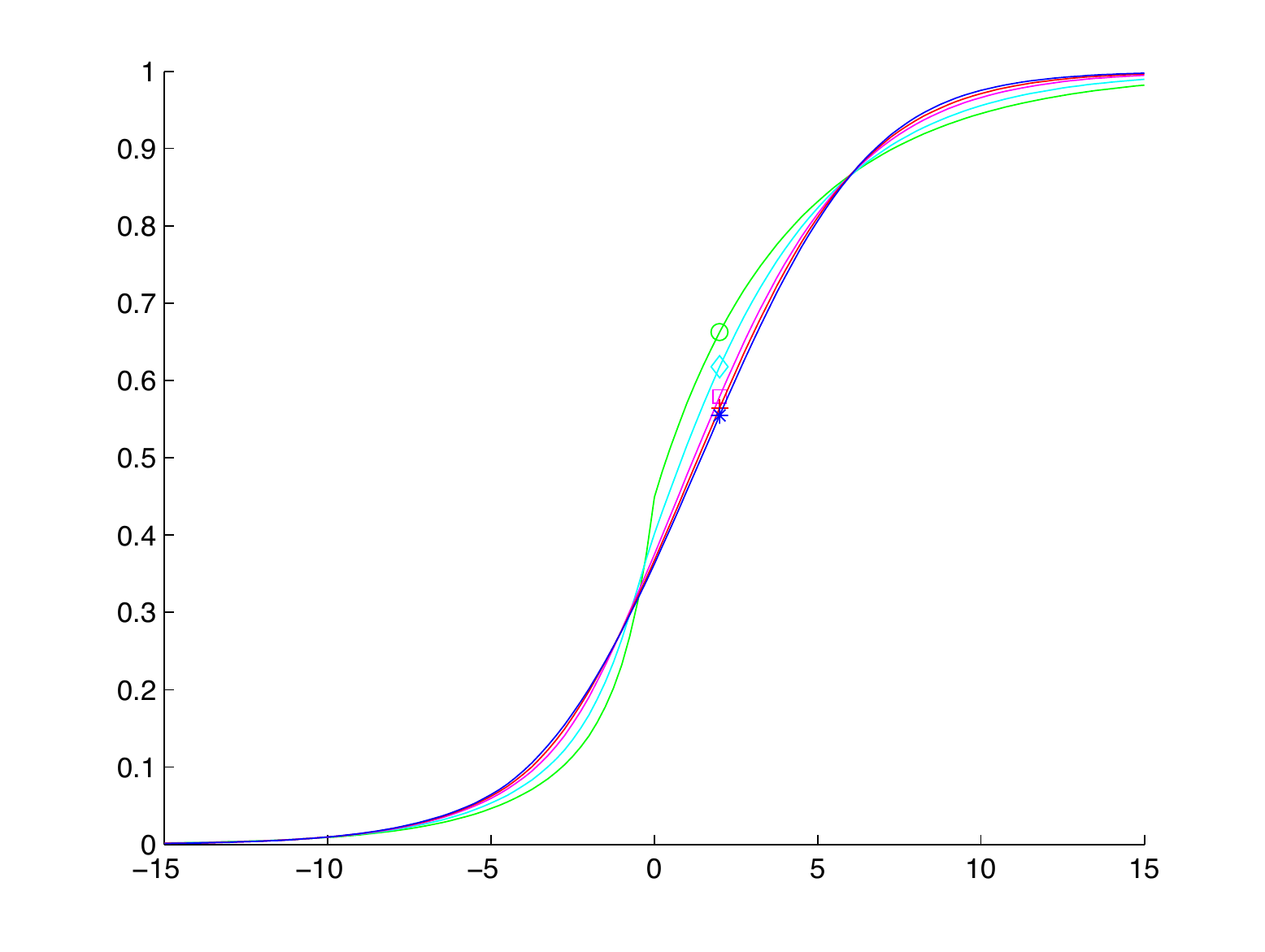} 
\includegraphics[scale=0.5]{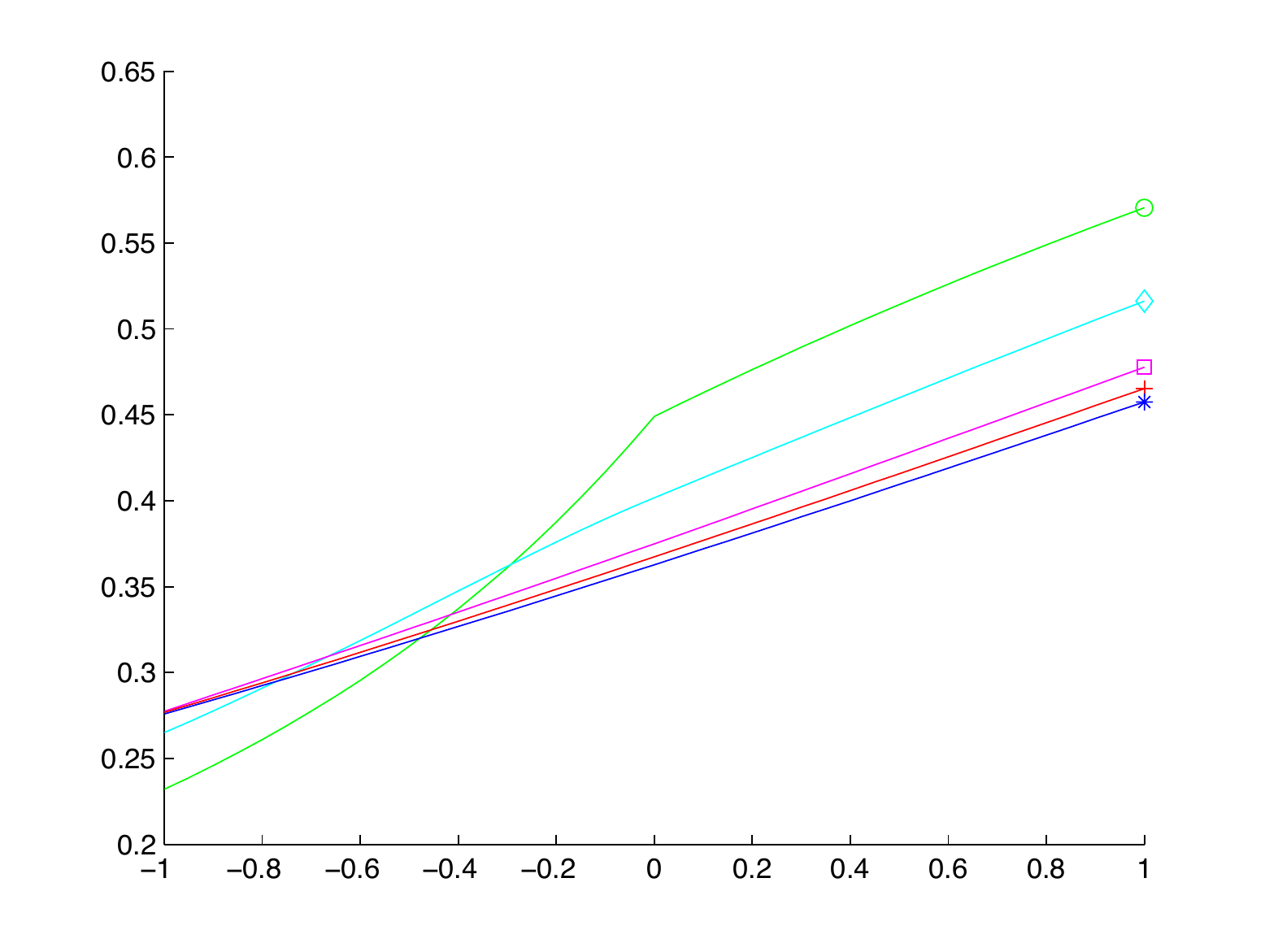}
\caption{Distribution function of the level at maturity, where the Erlang approximating
time has mean $\theta=10$,
conditionally given $X(0)=0$
and $\varphi(0)=2$.
The curve with the symbol  {\textcolor{green}{$\ocircle$}} is
for the parameter $L=1$,
{\textcolor{cyan}{$\Diamond$}} for $L=2,$ 
{\textcolor{magenta}{$\square$}} for $L=5,$
{\textcolor{red}{$+$}} for $L=10$ and 
{\textcolor{blue}{$*$}} for $ $$L=30.$ }\label{Figure1Ex}
\end{figure}
In general, the curves are quite smooth, with the exception of the density for $L=1$
if $\phi(0)$ is in $\mathcal{S}_{-}$:
we
display in greater detail the functions in a small interval around $0$ 
and we clearly see that the curve for $L=1$
has a different aspect at $x=0$.
We write 
\begin{equation*}
\boldsymbol{r}'_{-}(0^{-},1)
=
\lim_{x\uparrow0}
\frac{\partial}{\partial x}\boldsymbol{r}_{-}(x,1)
 \text{\quad and \quad }
 \boldsymbol{r}'_{-}(0^{+},1)
=
\lim_{x\downarrow0}
\frac{\partial}{\partial x}\boldsymbol{r}_{-}(x,1).
\end{equation*}
It appears from Figure \ref{Figure1Ex}
that the density  is discontinuous
there with 
$\boldsymbol{r}'_{-}(0^{-},1)>
 \boldsymbol{r}'_{-}(0^{+},1)$.
 This we  interpret as follows.
 In general, the value for $X(T)$ is the result of multiple
 changes, which results in smooth curves.
 If $L=1$
 and $\phi(0)=i \in \mathcal{S}_{-}$,
 however there is a small probability, 
 of the order of $\nu h$
 that the process is absorbed almost at once,
 in a small interval of length $h$.
If that happens, the fluid level
 is $c_i h<0$ and close to $0$.
 This explains why the difference between the limits of the density from below and from above is $\nu / \vert c_i \vert $,
 as we now show
 precisely.
On the one hand,   
\begin{eqnarray*}
\boldsymbol{r}'_{-}(0^{-},1)
&=&
\lim_{x\uparrow0}
 \frac{\partial}{\partial x}
 \boldsymbol{W}_{\left|x\right|}
 \hat{\boldsymbol{h}}(1),\\
 & = & -\boldsymbol{U}\hat{\boldsymbol{h}}(1),\\
 & = & -\left|C_{-}^{-1}\right|(A_{--}-\nu I)\hat{\boldsymbol{h}}(1)-\left|C_{-}^{-1}\right|A_{-+}\left(\boldsymbol{1}-\boldsymbol{h}(1)\right),
\end{eqnarray*}
by Proposition \ref{Linkhhhat},
and
on the other hand we have  that
\begin{eqnarray*}
\boldsymbol{r}'_{-}(0^{+},1)
&=&
\lim_{x\downarrow0}
 \frac{\partial}{\partial x}(\boldsymbol{1}-\hat{\boldsymbol{\Psi}}\hat{\boldsymbol{W}}_{\left|x\right|}\boldsymbol{h}(1)),\\
 & = & -\boldsymbol{\hat{\Psi}\hat{U}h}(1),\\
 & = & (-\boldsymbol{\hat{\Psi}}C_{+}^{-1}(A_{++}-\nu I)-\boldsymbol{\hat{\Psi}}C_{+}^{-1}A_{+-}\boldsymbol{\hat{\Psi}})\boldsymbol{h}(1),\\
 & = & 
 \left(
 \left|C_{-}^{-1}\right|(A_{--}-\nu I)\boldsymbol{\hat{\Psi}}+\left|C_{-}^{-1}\right|A_{-+}
 \right)
 \boldsymbol{h}(1),
 \end{eqnarray*}
 by Theorem \ref{RiccatiPsiEq}.
By Proposition \ref{Linkhhhat} we obtain
 \begin{eqnarray*}
 \boldsymbol{r}'_{-}(0^{+},1)
  & = & \left|C_{-}^{-1}\right|(A_{--}-\nu I)(\boldsymbol{1}-\hat{\boldsymbol{h}}(1))+\left|C_{-}^{-1}\right|A_{-+}\boldsymbol{h}(1),
\end{eqnarray*}
so that
\begin{eqnarray*}
\boldsymbol{r}'_{-}(0^{-},1)
-\boldsymbol{r}'_{-}(0^{+},1)
& = & -\left|C_{-}^{-1}\right|(A_{--}-\nu I)\boldsymbol{1}-\left|C_{-}^{-1}\right|A_{-+}\boldsymbol{1}\\
  & = & \nu\left|C_{-}^{-1}\right|\boldsymbol{1},
\end{eqnarray*}
 as $Q\boldsymbol{1}=0$,
 or 
 \begin{equation*}
r'_{i}(0^{-},1)
=r'_{i}(0^{+},1)+\frac{\nu}{\left|c_{i}\right|},
\end{equation*}
for any $i\in\mathcal{S}_{-}$.

\begin{Rem} 
{\bf Computational issues}

{\rm{
In this example, the total number $Lm$ of phases is sufficiently small and we have computed the matrix 
$\boldsymbol{W}_{x}=\exp(\boldsymbol{U}x)$
 by using the function
\texttt{expm}
 from MATLAB
 \cite{MATLAB}.
 When $Lm$ is large, we need to reduce the cost of computing $\boldsymbol{W}_{x}$
 which is $O(L^{3}m^{3})$
 (see Moler and Van Loan \cite{mv03})
 unless one uses structural properties of $U$.
 
 One approach is to apply
Algorithm 1 of Xue and Ye
\cite{xue2013computing}
which is designed for the computation of general 
exponentials of essentially non-negative matrices entry-wise to high relative accuracy.

In addition to computing $\boldsymbol{W}_{x}$
fast and accurately,
we wish to compute the blocks 
$ \boldsymbol{W}^{(0)}_{x},\cdots, \boldsymbol{W}^{(n)}_{x}$  only,
as they completely  specify $\boldsymbol{W}_{x}$,
 and this allows to fully benefot from the decomposition approach followed in this paper.
 Efficient algorithms to that effect are developed in
Bini {\it{et al.}} \cite{bini2014toepliz}.
One method considered there 
consists in exploiting the Toeplitz structure to specialize the shifting and Taylor series method of  
\cite{xue2013computing}.

Another highly efficient method, to reduce the computation cost is based on 
the block-circulant matrix method:
we define the matrix 
\begin{equation*}
\boldsymbol{V}_{\epsilon}
=\left[\begin{array}{cccccc}
\boldsymbol{U}^{(0)} & \boldsymbol{U}{}^{(1)} & \boldsymbol{U}^{(2)} &  &  & \boldsymbol{U}{}^{(L-1)}\\
\epsilon\boldsymbol{U}^{(L-1)} & \boldsymbol{U}^{(0)} & \boldsymbol{U}{}^{(1)} &  &  & \boldsymbol{U}{}^{(L-2)}\\
\epsilon\boldsymbol{U}{}^{(L-2)} &\epsilon \boldsymbol{U}^{(L-1)} & \boldsymbol{U}{}^{(0)} &  &  & \boldsymbol{U}^{(L-3)}\\
 \vdots & \vdots & \ddots & \ddots &  & \vdots\\
 &  &  & \\
 & & & & & \\
\epsilon\boldsymbol{U}{}^{(1)} & \cdots &  &  &  \epsilon\boldsymbol{U}{}^{(L-1)}& \boldsymbol{U}\mathcal{}^{(0)}
\end{array}\right].
\end{equation*}
where $\epsilon$
is some small number.
This is a block $\epsilon-$circulant matrix which may be block-diagonalized by Fast Fourier Transforms techniques,
so that the computation of 
$\exp(\boldsymbol{V}_{\epsilon}x)$
is reduced to 
$O(m^{2} L \log_{2}L)$
the computation of $L$ exponentials of matrices of order $m$,
and serves as a close approximation of 
$\boldsymbol{W}_{x}$.

A third approach investigated in
Bini {\it{et al.}} \cite{bini2014toepliz}
is as follows: one defines the matrices 
$S^{(0)},...,S^{(K-1)}$,
for $K\geq L$,
with 
\begin{numcases}
{ S^{(i)}=}
\boldsymbol{U}^{(i)},
&
\mbox{for  } $i \leq L-1$, \nonumber
\\
0, &
\mbox{for  } $L\leq i \leq K-1$,\nonumber

\end{numcases}
and  the block-circulant matrix
\begin{equation*}
S
=\left[\begin{array}{cccccc}
S^{(0)} & S^{(1)} & S^{(2)} & \cdots &  & S^{(K-1)}\\
S^{(K-1)} & S^{(0)} & S^{(1)} & \cdots &  & S^{(K-2)} \\
S^{(K-2)} & S^{(K-1)} & S^{(0)} & \cdots &  & S^{(K-3)}\\
\vdots &  & \ddots & \ddots  &  & \vdots \\
& & & & & \\
S^{(1)} & \cdots & & & S^{(K-1)}& S^{(0)}  
\end{array}\right].
\end{equation*}
The matrix $\exp(Sx)$
may be efficiently computed with a complexity 
$O(m^{2} K \log_{2}K)$
plus the cost of $K$ exponentials of matrices of order $m$,
 by means of FFT techniques and,
 for $K$ large enough,
 the blocks 
 $[\exp(Sx)]_{0,i}$,
 for $0\leq i \leq L-1$,
 constitute a good approximation of 
 $ \boldsymbol{W}^{(0)}_{x},\cdots, \boldsymbol{W}^{(L-1)}_{x}$.
 
 Full details about these three approximation methods are given in Bini {\it{et al.}} \cite{bini2014toepliz}.
}}
\end{Rem}

\subsection*{Acknowledgment}
The authors thank the Minist\`{e}re de la Communaut\'{e} fran\c caise de Belgique for funding this research through the ARC grant
AUWB-08/13-ULB 5.

\bibliographystyle{plain}

\end{document}